\newcommand{\be}{\begin{equation}}
\newcommand{\ee}{\end{equation}}
\newcommand{\beq}{\begin{eqnarray}}
\newcommand{\eeq}{\end{eqnarray}}
\newcommand{\cM}{\mathcal{M}}
\def\H{{\mathbb H}}
\def\R{{\mathfrak R}}
\def\bv{\bar\nabla}
\newtheorem{prop}{Proposition}[section]
\newtheorem{theo}[prop]{Theorem}
\newtheorem{lemm}[prop]{Lemma}
\newtheorem{rema}[prop]{Remark}
\newtheorem{exam}[prop]{Example}
\newtheorem{defi}[prop]{Definition}
\newtheorem{conj}[prop]{Conjecture}
\def\begeq{\begin{equation}}
\def\endeq{\end{equation}}
\def\p{\partial}
\def\R{\mathbb R}
\def\tr{{\rm tr}}
\def\d{\delta}
\def\a{\alpha}
\def\s{\sigma}
\def \ds{\displaystyle}
\def \vs{\vspace*{0.1cm}}
\def\l{\lambda}
\def\Hess{{\rm Hess\,}}
\def\S{\mathbb  {S}}
\def\div{{\rm div\,}}
\def\odot{\setbox0=\hbox{$\bigcirc$}\relax \mathbin {\hbox
to0pt{\raise.5pt\hbox to\wd0{\hfil $\wedge$\hfil}\hss}\box0 }}
\numberwithin{equation} {section}
\def\tilde{\widetilde}
\begin{document}

\title[Hyperbolic Gauss-Bonnet-Chern mass] {The GBC mass for asymptotically hyperbolic manifolds}

\author{Yuxin Ge}
\address{Laboratoire d'Analyse et de Math\'ematiques Appliqu\'ees,
CNRS UMR 8050,
D\'epartement de Math\'ematiques,
Universit\'e Paris Est-Cr\'eteil Val de Marne, \\61 avenue du G\'en\'eral de Gaulle,
94010 Cr\'eteil Cedex, France}
\email{ge@u-pec.fr}
\author{Guofang Wang}
\address{ Albert-Ludwigs-Universit\"at Freiburg,
Mathematisches Institut
Eckerstr. 1
D-79104 Freiburg}
\email{guofang.wang@math.uni-freiburg.de}

\author{Jie Wu}
\address{School of Mathematical Sciences, University of Science and Technology
of China Hefei 230026, P. R. China
\and
 Albert-Ludwigs-Universit\"at Freiburg,
Mathematisches Institut
Eckerstr. 1
D-79104 Freiburg
}
\email{jie.wu@math.uni-freiburg.de}

\begin{abstract} The paper consists of  two parts. In the first part,
by using the Gauss-Bonnet curvature, which is a natural generalization of the scalar curvature,
 we introduce a higher order mass,
the Gauss-Bonnet-Chern mass $m^{\H}_k$, for asymptotically hyperbolic manifolds
and show that it is a geometric invariant. Moreover, we prove a
positive mass theorem for this new  mass for asymptotically hyperbolic graphs and establish a relationship between
the corresponding Penrose type inequality for this  mass and weighted Alexandrov-Fenchel inequalities in the hyperbolic space $\H^n$.
In the second part,
 we  establish  these weighted Alexandrov-Fenchel inequalities in  $\H^n$ for any horospherical convex hypersurface $\Sigma$
\[ \int_{\Sigma} V \s_{2k-1}  d\mu \ge C_{n-1}^{2k-1} \omega_{n-1}{\left(\left(\frac{|\Sigma|}{\omega_{n-1}}\right)^{\frac{n}{k(n-1)}}+\left(\frac{|\Sigma|}{\omega_{n-1}}\right)^{\frac{n-2k}{k(n-1)}} \right)}^{k},\]
where $\s_{j}$ is the $j$-th  mean curvature of $\Sigma \subset\H^n$, $V=\cosh r$ and $|\Sigma|$ is the area of $\Sigma$.
 As an application, we obtain an optimal  Penrose type inequality
for the new mass defined in the first part
\[m_{k}^{\H} \ge
\frac{1}{2^k}{\left(\left(\frac{|\Sigma|}{\omega_{n-1}}\right)^{\frac{n}{k(n-1)}}+\left(\frac{|\Sigma|}{\omega_{n-1}}\right)^{\frac{n-2k}{k(n-1)}} \right)}^{k},\]
for asymptotically hyperbolic graphs 
 with a horizon type boundary $\Sigma$, provided that a dominant energy condition $\tilde L_k\ge0$ holds.
 Both inequalities are optimal.
\end{abstract}

\thanks{This project is partly supported by SFB/TR71
``Geometric partial differential equations''  of DFG}

\subjclass[2000]{53C21, (83C05, 83C30)}

\keywords{Gauss-Bonnet-Chern mass,  Gauss-Bonnet curvature, asymptotically hyperbolic manifold, positive mass theorem,
 Penrose inequality, Alexandrov-Fenchel inequality}

\maketitle

\tableofcontents

\part{The GBC mass for asymptotically hyperbolic manifolds}

\section{Introduction}

The study of the scalar curvature plays an important role in differential geometry. There are many beautiful results on manifolds
with positive or non-negative  scalar curvature.
One of them is the Riemannian positive mass theorem (PMT): {\it Any asymptotically flat Riemannian manifold $\cM^n$ with a suitable decay order
and with nonnegative scalar curvature
has the nonnegative ADM mass. Moreover, equality holds if and only if the manifold $\cM^n$  is isometric to the Euclidean space $\R^n$ with the standard metric.}   This theorem was first proved by Schoen and Yau \cite{SY} for manifolds of dimension $n\le 7$
by using a minimal hypersurface argument,
 and later for spin manifolds by Witten \cite{Wit}. See also \cite{PT}.
 For locally conformally flat manifolds the proof was given in \cite{Schoen2}
using the developing map.
Very recently, for the special case of asymptotically flat Riemannian manifolds $\cM^n$ which can be represented by  graphs over $\R^{n}$, Lam \cite{Lam} gave a proof by observing that the scalar curvature of these asymptotically flat graphs can be expressed exactly as a divergence term.
For general  higher dimensional manifolds, the proof of the  positive mass theorem was announced by Schoen \cite{Schoen_talk} and
Lohkamp \cite{Loh} by an argument extending the minimal hypersurface argument of Schoen and Yau. There are many generalizations and applications
of the positive mass theorem.
See \cite{Bar, Dai, Shi_Tam, Miao} for instance.

A refinement of the PMT  is  the Riemannian Penrose inequality
\beq\label{eq01}
m_1=m_{ADM} \geq \frac 12\left(\frac{|\Sigma|}{\omega_{n-1}}\right)^{\frac{n-2}{n-1}},
\eeq
 where $m_{ADM} $ is the ADM mass of the  asymptotically flat Riemannian manifold
with a horizon $\Sigma$  and $|\Sigma|$ denotes the area of $\Sigma$. (\ref{eq01}) was
 proved by Huisken-Ilmanen \cite{HI}
 and Bray \cite{BP}
 for $n=3$.
Later, Bray and Lee \cite{BL} generalized Bray's proof to the case  $n\le 7$. See also the excellent surveys  of Bray \cite{Bray} and Mars \cite{Mars}.
Recently, Lam \cite{Lam} gave an elegant proof of (\ref{eq01}) in all dimensions for an   asymptotically flat
manifold which is a graph. His idea  was extended by Huang-Wu in \cite{HuangWu}  and de Lima-Gir\~ao in
\cite{dLG1,dLG2} to submanifolds of $\R^{n+1}$ and  of general ambient spaces.

The ADM mass, together with the positive mass theorem, was generalized to asymptotically hyperbolic manifolds in \cite {{AnderssonCaiGalloway}, BQ, CN,CH,Wang, ZhangX}.
For this asymptotically hyperbolic mass, the corresponding  Penrose  conjecture is: {\it For asymptotically hyperbolic manifold $(\cM^n,g)$ with an outermost horizon $\Sigma$, its mass satisfies
\begin{equation} \label{1Penrose}
m_1^{\H}=m^{\H}\geq
\frac{1}{2}\left\{
\left( \frac{|\Sigma|}{\omega_{n-1}} \right)^{\frac{n-2}{n-1}}
+ \left(\frac{|\Sigma|}{\omega_{n-1}}\right)^{\frac{n}{n-1}}
\right\},
\end{equation}
provided that the dominant energy condition
\beq \label{D_con1}
R_g \geq -n(n-1),
\eeq
holds.}  Here $R_g$ denotes the scalar curvature of $g$. Neves \cite{Neves} showed that the powerful inverse mean curvature flow of Huisken-Ilmanen
\cite{HI} alone could not be used to prove \eqref{1Penrose}. (However, very recently Lee and Neves have a new development in this direction \cite{LeeNeves}.)
 Recently, motivated by the work of Lam \cite{Lam}, Dahl-Gicquaud-Sakovich \cite{DGS}
and de Lima and Gir\~ao \cite{dLG} proved the Penrose inequality \eqref{1Penrose} for asymptotically hyperbolic graphs over $\H^n$.
More precisely, Dahl-Gicquaud-Sakovich \cite{DGS} proved for asymptotically hyperbolic graphs generated by a function $f:\H^n\backslash \Omega\to \R$,
the  hyperbolic mass satisfies a crucial estimate in terms of  a weighted mean curvature integral
\begin{equation} \label{DGS1}
m_1^{\H} \ge \frac 1{2(n-1)\omega_{n-1}}  \int_{\Sigma} V H d\mu,
\end{equation}
where $\Sigma=\partial \Omega$, $H$ is the mean curvature of $\Sigma$  induced by the hyperbolic metric $b$, $\omega_{n-1}$ is the area of the unit sphere, $d\mu$ is the volume element induced by $b$, $V=\cosh r$ and  $r$ is the hyperbolic distance from an arbitrary fixed point on $\H^n$. They also gave an almost sharp estimate for $\int_{\Sigma} V H d\mu$,
and hence an almost sharp estimate for the hyperbolic mass. The sharp estimate for $\int_{\Sigma} V H d\mu$ is a weighted  hyperbolic Minkowski inequality, or a weighted hyperbolic
Alexandrov-Fenchel inequality
\begin{equation} \label{eq_dLG}
\ds  \int_{\Sigma}  V H d\mu \ge (n-1)  \omega_{n-1}\left\{\left(\frac{|\Sigma|}{\omega_{n-1}}\right)^{\frac{n-2}{n-1}}+\left(\frac{|\Sigma|}{\omega_{n-1}}\right)^{\frac{n}{n-1}}\right\},
\end{equation}
if $\Sigma$ is star-shaped and mean-convex (i.e. $H>0$). This result  was proved by de Lima and Gir\~ao \cite{dLG}.
A closely related Minkowski type inequality was proved by Brendle-Hung-Wang \cite{BHW}, which
is not only true for the hyperbolic space, but also for anti-de Sitter Schwarzschild manifolds. For the proof of  \eqref{eq_dLG},
a Heintze-Karcher type inequality proved by Brendle in \cite{Brendle} and an inverse curvature flow studied by Gerhardt \cite{Gerhardt} play an important role.

Recently motivated by the Gauss-Bonnet gravity \cite{DT1, DT2}, we have introduced  the Gauss-Bonnet-Chern mass $m_{GBC}$ for asymptotically flat manifolds by using the
following Gauss-Bonnet  curvature
 \begin{equation}\label{Lk2}
L_k:=\frac{1}{2^k}\d^{i_1i_2\cdots i_{2k-1}i_{2k}}
_{j_1j_2\cdots j_{2k-1}j_{2k}}{R_{i_1i_2}}^{j_1j_2}\cdots
{R_{i_{2k-1}i_{2k}}}^{j_{2k-1}j_{2k}}.
\end{equation}
Here $\d^{i_1i_2\cdots i_{2k-1}i_{2k}}
_{j_1j_2\cdots j_{2k-1}j_{2k}}$ is the generalized delta defined by (\ref{generaldelta}) below and ${R_{ij}}^{sl}$ is the Riemannian curvature tensor. One can check that
  $L_1$ is just the scalar curvature $R$. When $k=2$, it is the (second) Gauss-Bonnet curvature
\[L_2 = \|Rm\|^2-4\|Ric\|^2+R^2,\]
which appeared at the first time in the paper of Lanczos \cite{Lan} in 1938. For general $k$, it is just the Euler integrand in the
Gauss-Bonnet-Chern theorem \cite{Chern1, Chern2} if $n=2k$ and
 is therefore called the dimensional continued Euler density
in physics if $k<n/2$. A systematic study of $L_k$ was first given by Lovelock \cite{Lovelock}. See also \cite{Pat}, \cite{Willa} and \cite{Labbi}.
The Gauss-Bonnet-Chern mass $m_{GBC}$ for the asymptotically flat manifolds is defined in \cite{GWW} by
\begin{equation}\label{GBC}
m_k=m_{GBC}=c(n,k)
\lim_{r\to\infty}\int_{S_r}P_{(k)}^{ijls}\partial_s g_{jl} \nu_{i}d\mu,\end{equation}
with
\begin{equation}\label{eq_const}
c(n,k)=\frac{(n-2k)!} {2^{k-1}(n-1)!\;\omega_{n-1}},\end{equation}
where $\omega_{n-1}$ is the volume of $(n-1)$-dimensional standard unit sphere and  $S_r$ is the Euclidean coordinate sphere, $d\mu$ is the volume element on $S_r$ induced by the Euclidean metric and  $\nu$ is the outward unit normal to $S_r$ in $\mathbb{R}^n$.
Here the $(0,4)$-tensor $P_{(k)}$ is defined in (\ref{Pk}) below.
This $(0,4)$-tensor has a crucial property that it is divergence-free. See Section 2 below.
For a similar definition see also \cite{LN}.
In \cite{GWW} and \cite{GWW3}, we  prove a positive mass theorem in the case that $\cM$ is an asymptotically flat graph over $\R^n$
or $\cM$ is  conformal  to $\R^n$ respectively. 
For our mass $m_{GBC}$, a corresponding Penrose conjecture was proposed in \cite{GWW}
\begin{equation} \label{PenroseE}
m_k=m_{GBC} \geq \frac{1}{2^k}\left(\frac{|\Sigma|}{\omega_{n-1}}\right)^{\frac{n-2k}{n-1}}.
\end{equation}
Moreover we proved in \cite{GWW} that this conjecture is true for asymptotically flat graphs over $\R^n\backslash \Omega$
by using classical Alexandrov-Fenchel inequalities, if
$\Sigma=\partial \Omega$ is convex. For  the classical Alexandrov-Fenchel inequalities see
excellent books  \cite{BuragoZalgaller, Santos, Schneider}.
The Alexandrov-Fenchel inequalities  also hold for certain classes of non-convex hypersurfaces.
See for example \cite{ChangWang,GuanLi3,Huisken}.

In this paper, motivated by our previous work, by using the Gauss-Bonnet curvature
we  introduce a higher order mass for asymptotically hyperbolic  manifolds, which is a generalization of the
mass introduced  by Wang \cite{Wang} and Cru\'sciel-Herzlich \cite{CH}.
See also \cite{CN,Herzlich,M, ZhangX}.
However,  if we use directly the Gauss-Bonnet curvature $L_k$, we  can only obtain a mass proportional to the the usual hyperbolic mass, rather than a new one. This phenomenon is different from the Euclidean case. 
In order to define a higher order mass for asymptotically hyperbolic manifolds, the crucial observation is a slight modification of the Gauss-Bonnet curvature. More precisely, on a Riemannian manifold $(\cM^n,g)$, we consider a modified Riemann curvature tensor
\be\label{Riem}
\widetilde  {\rm Riem}_{ijls}(g)=\tilde  R_{ijls}(g):=R_{ijls}(g)+g_{il}g_{js}-g_{is}g_{jl}.\ee
Clearly, the modified Riemann curvature tensor $\widetilde  {\rm Riem}$ has the same symmetry as the ordinary
Riemann curvature tensor $Riem$ and also satisfies the first and second Bianchi identities.
We define a new Gauss-Bonnet curvature with respect to this tensor $\widetilde {Riem}$
\begin{equation}\label{tildeLk_intro}
\tilde L_k:=\frac{1}{2^k}\d^{i_1i_2\cdots i_{2k-1}i_{2k}}
_{j_1j_2\cdots j_{2k-1}j_{2k}} {\tilde R_{i_1i_2}}^{\quad\, j_1 j_2}\cdots
{\tilde R_{i_{2k-1}i_{2k}}}^{\qquad\;\,j_{2k-1}j_{2k}}=\tilde R_{ijls}{\tilde P_{(k)}}^{ijls},
\end{equation}
where
\begin{equation}
{{\tilde P}_{(k)}}^{ijls}:=\frac{1}{2^k}\d^{i_1i_2\cdots i_{2k-3}i_{2k-2}ij}
_{j_1j_2\cdots  j_{2k-3}j_{2k-2} j_{2k-1}j_{2k}}{\tilde R_{i_1i_2}}^{\quad\, j_1 j_2}\cdots
{\tilde R_{i_{2k-3}i_{2k-2}}}^{\qquad\quad j_{2k-3}j_{2k-2}}g^{j_{2k-1}l}g^{j_{2k}s}.
\end{equation}
The tensor $\tilde P_{(k)}$ has also the  crucial property of divergence-free which enables us to define a new mass.

Let us assume now that $2\le k <\frac n2$. Remark that the case $k=1$ was studied  in \cite{Wang, CH, ZhangX, AnderssonCaiGalloway, CN, DGS, dLG1, dLG, HuangWu1,
BHW, LWX} mentioned above and we will not repeat this remark
 again later.

Let us first give the definition of  asymptotically hyperbolic manifolds.
Throughout this paper, we denote the hyperbolic metric  by $(\mathbb{H}^n,b=dr^2+\sinh^2r d\Theta^2),$
where $d\Theta^2$ is the standard round metric  on $\S^{n-1}$.

A Riemannian manifold $(\cM^n,g)$ is called {\it asymptotically hyperbolic of decay order $\tau$} if there exists a compact subset $K$ and a diffeomorphism at infinity $\Phi:\cM\setminus K\rightarrow\mathbb{H}^n\setminus B$, where $B$ is a closed ball in $\mathbb{H}^n$, such that $(\Phi^{-1})^{\ast}g$ and $b$ are uniformally equivalent on $\H^n\setminus B$ and
\begin{equation}\label{decaytau}
\|(\Phi^{-1})^{\ast}g-b\|_b+\|\bar\nabla\left((\Phi^{-1})^{\ast}g\right)\|_b+ \|\bar\nabla^2\left((\Phi^{-1})^{\ast}g\right)\|_b=O(e^{-\tau r} ),
\end{equation}
where $\bar\nabla$ denotes the covariant derivative with respect to the hyperbolic metric $b$.

In the first part of this paper
we first introduce  a  ``higher order" mass for asymptotically hyperbolic manifolds with slower decay.

\begin{defi}\label{mk_AH}
Assume that  $(\cM^n,g)$ is  an asymptotically hyperbolic manifold of decay order $$\tau>\frac{n}{k+1},$$ and $V \tilde L_k$ is integrable on $(\cM^n,g)$ for $V\in\mathbb N_b:= \{V\in C^{\infty}(\H^n)|\Hess^b V=Vb\}$. We define the Gauss-Bonnet-Chern mass integral with respect to the diffeomorphism $\Phi$ by
\begin{equation}\label{HV_intro}
H_k^\Phi(V)=\lim_{r\rightarrow\infty}\int_{S_r}\bigg(\big(V\bar\nabla_s e_{jl}-e_{jl} \bar\nabla_s V\big) \tilde P^{ijls}_{(k)}\bigg)\nu_i d\mu,
\end{equation}
where $e_{ij}:=((\Phi^{-1})^{\ast}g)_{ij}-b_{ij}$.
\end{defi}

Remark that the hyperbolic mass $m_1^{\H}$ defined by  Wang \cite{Wang} and Cru\'sciel-Herzlich \cite{CH}
can in principle be defined for asymptotically hyperbolic manifolds of decay order
$$\tau >\frac n2.$$
For a discussion  about the range of $\tau$, see Remark \ref{rem3.4} below.

The above definition of the asymptotically hyperbolic mass involves the choice of coordinates at infinity. Hence one needs
 to ask if it is a geometric invariant, namely  if it  does not depend on the choice of  coordinates at infinity. This is true.

\begin{theo}
Suppose that $(\cM^n,g)$ is an asymptotically hyperbolic manifold of decay order $\tau>\frac{n}{k+1}$ and for $V\in\mathbb N_b$,
$V \tilde L_k$ is integrable on $(\cM^n,g)$,  then the mass functional $H_k^\Phi(V)$  is well-defined and does not depend on the choice of the  coordinates at infinity used in the definition.
\end{theo}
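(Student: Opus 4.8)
The plan is to reduce everything to a single divergence identity for the integrand of $H_k^\Phi(V)$, after which both finiteness of the limit and independence of the chart follow by applying Stokes' theorem on large geodesic annuli in $\H^n$ — this is the template used for the ADM mass and the order-one hyperbolic mass and, in the Gauss--Bonnet--Chern setting, in \cite{GWW}. First I would fix a chart $\Phi$, write $g=b+e$ with $e=(\Phi^{-1})^\ast g-b$, and set $\mathbb U^i:=\big(V\bar\nabla_s e_{jl}-e_{jl}\bar\nabla_s V\big){\tilde P_{(k)}}^{ijls}$, so that $H_k^\Phi(V)=\lim_{r\to\infty}\int_{S_r}\mathbb U^i\nu_i\,d\mu$. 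The identity I aim to prove is
\[
\bar\nabla_i\mathbb U^i=V\,\tilde L_k(g)+\mathfrak r ,
\]
valid for every $V\in\mathbb N_b$, where $\mathfrak r$ is an ``admissible remainder'': a finite sum of terms each of homogeneity at least $k+1$ in the perturbation $e$ and its $\bar\nabla$-derivatives, hence pointwise of size $O(\re^{-(k+1)\tau r})$ and therefore integrable against $d\mu_b$ precisely because $\tau>\tfrac{n}{k+1}$ (the weight and the exponential volume growth together contribute only $V\,d\mu_b=O(\re^{nr})\,dr$). To establish the identity I would use three facts. (i) The divergence-free property of $\tilde P_{(k)}$ recorded after \eqref{tildeLk_intro}: when $\bar\nabla_i$ hits $\mathbb U^i$, the $\tilde P_{(k)}$-factor contributes $\bar\nabla_i{\tilde P_{(k)}}^{ijls}$, and since the leading ($e$-homogeneity $k-1$) part of $\tilde P_{(k)}(b+e)$ is built from $b$ and the linearized modified curvature $D\tilde R[e]$ — which obeys the linearized Bianchi identity, whose generalized-delta contraction is divergence-free exactly as for the Lovelock tensor — this term is admissible, and one is left with $[\bar\nabla_iV\,\bar\nabla_s e_{jl}+V\,\bar\nabla_i\bar\nabla_s e_{jl}-\bar\nabla_i e_{jl}\,\bar\nabla_s V-e_{jl}\,\bar\nabla_i\bar\nabla_s V]\,{\tilde P_{(k)}}^{ijls}$ up to admissible terms. (ii) The first and second Bianchi identities for $\widetilde{\rm Riem}$, used through the schematic formula $D\tilde R_{ijls}[e]=\tfrac12(\bar\nabla_i\bar\nabla_l e_{js}+\bar\nabla_j\bar\nabla_s e_{il}-\bar\nabla_i\bar\nabla_s e_{jl}-\bar\nabla_j\bar\nabla_l e_{is})+(\text{algebraic in }e)$ together with the index symmetries of $\tilde P_{(k)}$: commuting covariant derivatives (which is purely algebraic since ${\rm Riem}(b)$ is the explicit constant-curvature tensor) converts the $V\,\bar\nabla\bar\nabla e$ contraction into $V\,D\tilde R[e]_{ijls}{\tilde P_{(k)}}^{ijls}$, which is the $e$-homogeneity-$k$ part of $V\tilde L_k(g)$ since $\tilde R(g)=D\tilde R[e]+O(e^2)$. (iii) The static equation $\Hess^bV=Vb$ defining $\mathbb N_b$, which turns $\bar\nabla_i\bar\nabla_s V$ into $V\,b_{is}$ and makes the $\bar\nabla V$-terms organize so that the whole sum collapses to $V\tilde L_k(g)$; this is the hyperbolic analogue of the role of constant and affine potentials in the asymptotically flat case.

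Granting this identity, well-definedness is immediate. For $r_1<r_2$, Stokes' theorem on the annulus $\{r_1\le r\le r_2\}\subset\H^n$ gives
\[
\int_{S_{r_2}}\mathbb U^i\nu_i\,d\mu-\int_{S_{r_1}}\mathbb U^i\nu_i\,d\mu=\int_{\{r_1\le r\le r_2\}}\big(V\,\tilde L_k(g)+\mathfrak r\big)\,d\mu_b ,
\]
whose right-hand side tends to $0$ as $r_1,r_2\to\infty$ because $V\tilde L_k\in L^1$ by hypothesis (and $d\mu_g,d\mu_b$ are uniformly equivalent) while $\mathfrak r\in L^1$ by the decay estimate. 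Hence $r\mapsto\int_{S_r}\mathbb U^i\nu_i\,d\mu$ is Cauchy and $H_k^\Phi(V)$ exists and is finite; note that $V\tilde L_k$ is intrinsic, so its integrability is a chart-independent hypothesis.

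For independence of the chart, let $\Phi_1,\Phi_2$ be two admissible diffeomorphisms at infinity and put $\Psi:=\Phi_2\circ\Phi_1^{-1}$. First I would invoke the asymptotic-symmetry lemma: since both $(\Phi_i^{-1})^\ast g$ converge to $b$ at order $\tau$, the map $\Psi$ is asymptotic at infinity to an isometry $A$ of $\H^n$, with an error $\Psi-A$ decaying fast enough not to affect any limit integral of the above type; this is proved as for the order-one hyperbolic mass (see \cite{CH,CN,Wang}), the relevant analysis involving only $e$ and its first derivatives, so the weaker range $\tau>\tfrac{n}{k+1}$ still suffices. Replacing $\Phi_2$ by $A^{-1}\circ\Phi_2$ and using the manifest equivariance $H_k^{\Phi_2}(V)=H_k^{A^{-1}\circ\Phi_2}(V\circ A)$, I reduce to the case $\Psi\to\mathrm{id}$. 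Then I would pick a smooth path $\Psi_t$ of diffeomorphisms asymptotic to the identity with $\Psi_0=\mathrm{id}$, $\Psi_1=\Psi$, set $g_t=(\Psi_t^{-1})^\ast\big((\Phi_1^{-1})^\ast g\big)$ and $e_t=g_t-b$, so $\partial_t e_t=\mathcal L_{\zeta_t}g_t$ with $\zeta_t\to0$, and differentiate $H_k^{\Phi_1\circ\Psi_t^{-1}}(V)$ in $t$: by the divergence identity the integrand's divergence changes by a Lie-derivative contribution, and by diffeomorphism-covariance of $\tilde L_k$ (so that $\partial_t\!\int V\,\tilde L_k(g_t)$ integrates by parts, against $\Hess^bV=Vb$, to a boundary term at infinity) one finds $\partial_t H_k^{\Phi_1\circ\Psi_t^{-1}}(V)$ equals the limit at infinity of a surface integral whose integrand has divergence of admissible order, hence vanishes. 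Therefore $H_k^{\Phi_2}(V)=H_k^{\Phi_1}(V)$, and in general the functional $V\mapsto H_k^\Phi(V)$ on $\mathbb N_b$ is independent of $\Phi$ up to the action of the isometry group of $\H^n$ — the precise sense in which it is a geometric invariant, as for $k=1$.

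The main obstacle will be establishing the divergence identity $\bar\nabla_i\mathbb U^i=V\tilde L_k(g)+\mathfrak r$ itself, and in particular the purely algebraic matching of the $e$-homogeneity-$k$ parts of the two sides: one must expand $\tilde P_{(k)}(b+e)$ and $\tilde R(b+e)$ to the needed orders, pass from the $\nabla^g$-divergence-free property of $\tilde P_{(k)}$ to a statement about $\bar\nabla_i{\tilde P_{(k)}}^{ijls}$ carrying explicit Christoffel-difference corrections, commute covariant derivatives using the explicit constant-curvature tensor of $b$, and verify that every term of intermediate $e$-homogeneity $j$ with $2\le j\le k$ either cancels — by the symmetries of $\tilde P_{(k)}$ and the linearized Bianchi identities — or is reassembled into $V\tilde L_k(g)$, leaving nothing of intermediate order outside a total divergence already absorbed into $\mathbb U$. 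Unlike the order-one case, where $\mathbb U$ is linear in $e$, here the integrand is homogeneous of degree $k$, so this bookkeeping is substantially heavier; it is, however, algebraic once the correct contractions are isolated, and it is exactly the divergence-free property of $\tilde P_{(k)}$ — the same property underlying the Lovelock/Gauss--Bonnet gravity field equations — that makes it close. A secondary technical point is the quantitative asymptotic-symmetry lemma for $\Psi$ in this weaker decay range.
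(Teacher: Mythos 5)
Your first half (well-definedness) is essentially the paper's own argument: the identity $\bar\nabla_i\mathbb{U}^i=\tfrac12 V\tilde L_k+\mathfrak{r}$, with $\mathfrak{r}$ of $e$-homogeneity at least $k+1$ and hence $O(e^{(1-(k+1)\tau)r})$, is exactly the paper's formula \eqref{add1} (note the normalization factor $\tfrac12$), obtained there from the divergence-free property of $\tilde P_{(k)}$, the Christoffel-difference formula \eqref{Gammadiff}--\eqref{Rmdiff} and the static equation $\Hess^bV=Vb$; the Cauchy-on-annuli conclusion is the same. This part is fine.

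The chart-independence half has a genuine gap, in two places, both at the crucial step. First, after reducing to $g_2=g_1+\mathcal{L}_\zeta b+o(e^{-\frac{2n}{k+1}r})$ with $\zeta=o(e^{-\frac{n}{k+1}r})$, the difference $\mathbb{U}_2^i-\mathbb{U}_1^i$ contains the term $\big(V\bar\nabla e-e\,\bar\nabla V\big)\big(\tilde P_{(k)}(g_2)-\tilde P_{(k)}(g_1)\big)$, and naive counting does \emph{not} control it: since $\tilde P_{(k)}$ has degree $k-1$ in $\tilde R$, one gets only $\tilde P_{(k)}(g_2)-\tilde P_{(k)}(g_1)=o(e^{-\frac{(k-1)n}{k+1}r})$, which against the prefactor $O(e^{(1-\tau)r})$ and the area growth $e^{(n-1)r}$ of $S_r$ leaves a contribution of size $o(e^{\frac{n}{k+1}r})$ --- unbounded. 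What saves the argument is the refined estimate $\tilde R(g_2)-\tilde R(g_1)=o(e^{-\frac{2n}{k+1}r})$, i.e.\ one extra order of smallness for the variation of the \emph{modified} curvature along a Lie-derivative direction, which holds precisely because $\tilde R$ is diffeomorphism-covariant and $\tilde R(b)=0$ so the first-order term is $\mathcal{L}_\zeta\tilde R(g_1)=O(|\zeta|\,|\tilde R(g_1)|)$. This yields $\tilde P_{(k)}(g_2)-\tilde P_{(k)}(g_1)=o(e^{-\frac{kn}{k+1}r})$ (the paper's \eqref{ast}, proved in Appendix B) and makes the $\delta\tilde P$-term genuinely negligible. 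Your proposal never isolates or estimates this term; it is invisible in the $k=1$ case you are modelling on, because there $\tilde P_{(1)}$ depends only on the metric.

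Second, even granting this, the remaining linear-in-$\zeta$ part $\delta\mathbb{U}^i$ is of exactly the critical order $e^{-(n-1)r}$, and your stated reason for its surface integral to vanish --- that the integrand ``has divergence of admissible order'' --- is a non sequitur: if $\bar\nabla_iW^i$ is integrable then $\lim_r\int_{S_r}W^i\nu_i\,d\mu$ \emph{exists}, but need not be zero (the mass integrand itself is of this type). The actual mechanism is structural: using the Ricci identity on $\H^n$, the antisymmetries and first Bianchi identity of $\tilde P_{(k)}$, and $\Hess^bV=Vb$, one shows $\delta\mathbb{U}^i=-\bar\nabla_l\alpha^{il}+o(e^{-(n-1)r})$ with $\alpha^{il}$ \emph{antisymmetric}, so that $\int_{S_r}\delta\mathbb{U}^i\nu_i\,d\mu$ is the integral of an exact $(n-2)$-form over the closed hypersurface $S_r$, hence identically zero, plus $o(1)$. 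This algebraic identification (the paper's computation leading to \eqref{2form}) is the heart of the Chru\'sciel--Herzlich-type invariance argument and is absent from your proposal; your path-differentiation reformulation is a legitimate template, but it would still have to produce exactly this antisymmetric potential and the refined estimate above to close.
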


The proof is motivated by \cite{CH} and \cite{M} and certainly also by \cite{Bar}.
 From the mass functional $H_k^\Phi$ on $\mathbb N_b$  we define
a higher order mass, the Gauss-Bonnet-Chern mass for asymptotically hyperbolic manifolds as follows
\begin{equation}\label{GBC_h}
 m^{\H}_k:=c(n,k)\inf_{\mathbb{N}_{b}\cap\{V>0,\eta(V,V)=1\}}H_k^{\Phi}(V),  \end{equation}
 where $c(n,k)$ is the  normalization constant   given in \eqref{eq_const} and $\eta$ is a Lorentz inner product defined in (\ref{eta}) below.
A more precise definition  will be given in Definition 3.3.
As discussed in (\ref{GBC_h2}) below,
one may assume that the infimum in \eqref{GBC_h} is achieved by
 $$V=V_{(0)}=\cosh r,$$
where $r$ is the hyperbolic distance to a fixed point $x_0\in \H^n$. Therefore,  in the rest of the introduction, we fix
 $V=V_{(0)}=\cosh r.$

\begin{theo}[Positive Mass Theorem]\label{PMthm}
Let $(\cM^n,g)=(\mathbb{H}^n,b+V^2df\otimes df)$ be the graph of a smooth  function $f:\mathbb{H}^n\rightarrow \mathbb{R}$  which satisfies $V\tilde L_k$ is integrable and  $(\cM^n,g)$ is asymptotically hyperbolic of decay order $\tau>\frac{n}{k+1}$.
 Then we have
\beq\label{mass_form}
m_k^{\H}=c(n,k)\int_{\cM^n}\frac 12\frac{V\tilde L_k}{\sqrt{1+V^2|\bv f|^2}}dV_g.
\eeq
In particular, $\tilde L_k\geq 0$ implies $m_k^{\H}\geq 0.$
\end{theo}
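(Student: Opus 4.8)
The plan is to mimic Lam's divergence-structure argument, adapted to the hyperbolic setting and to the modified curvature tensor $\tilde R$, and to combine it with the boundaryless form of the mass integral $H_k^\Phi(V_{(0)})$. First I would compute the induced metric $g = b + V^2\, df\otimes df$ on the graph over $\mathbb{H}^n$ and express its curvature tensor $R_{ijls}(g)$, and then the modified tensor $\tilde R_{ijls}(g)$, in terms of $f$, its hyperbolic covariant derivatives $\bar\nabla f$, $\bar\nabla^2 f$, and the function $V=\cosh r$. The key structural fact I expect to need is that, after contracting with the divergence-free tensor $\tilde P_{(k)}$, the quantity $V\tilde L_k\, dV_g$ is, up to the factor $\sqrt{1+V^2|\bar\nabla f|^2}$, a \emph{total divergence} with respect to the hyperbolic metric $b$: something of the schematic form
\begin{equation}
\frac{V\tilde L_k}{\sqrt{1+V^2|\bar\nabla f|^2}}\,dV_g = \bar\nabla_i\!\left(\text{(something built from }V, \bar\nabla V, f, \bar\nabla f, \tilde P_{(k)})\right)d\mu_b .
\end{equation}
This is the higher-order hyperbolic analogue of Lam's observation that the scalar curvature of a graph is a divergence; the divergence-free property of $\tilde P_{(k)}$ (stated in the excerpt) is exactly what makes the higher-order terms collapse, just as in the asymptotically flat Gauss-Bonnet-Chern case of \cite{GWW}.

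Next I would integrate this identity over $\mathcal{M}^n$ (equivalently over $\mathbb{H}^n$), apply the divergence theorem, and identify the resulting boundary integral at infinity with the mass integral $H_k^\Phi(V_{(0)})$ from Definition \ref{mk_AH}. For this I need to check two things: that the "interior" boundary contribution vanishes (here $f$ is defined on all of $\mathbb{H}^n$, so there is no horizon and no inner boundary), and that the boundary term at infinity, written in the graph coordinates, matches $\int_{S_r}\big((V\bar\nabla_s e_{jl}-e_{jl}\bar\nabla_s V)\tilde P^{ijls}_{(k)}\big)\nu_i\,d\mu$ up to terms that decay under the hypothesis $\tau > n/(k+1)$. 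Since $e_{ij} = g_{ij}-b_{ij} = V^2 f_i f_j$ is quadratic in $\bar\nabla f$, the leading boundary term is already governed by $f$, and the decay rate guarantees both that $H_k^\Phi(V_{(0)})$ converges and that the lower-order pieces produced by the divergence computation drop out in the limit. Multiplying by the normalization constant $c(n,k)$ and recalling from \eqref{GBC_h} that the infimum defining $m_k^{\H}$ is attained at $V=V_{(0)}=\cosh r$, this yields the mass formula \eqref{mass_form}.

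The positivity statement is then immediate: the integrand $\tfrac12\, V\tilde L_k / \sqrt{1+V^2|\bar\nabla f|^2}$ is pointwise nonnegative whenever $\tilde L_k \ge 0$, since $V=\cosh r > 0$ and the denominator is positive, so $m_k^{\H}\ge 0$.

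The main obstacle is the first step: establishing the precise divergence identity for $V\tilde L_k$ on the graph. One must carefully track how the extra terms $g_{il}g_{js}-g_{is}g_{jl}$ in the definition of $\tilde R$ interact with the graph curvature, and verify that the "background" contribution $\tilde L_k(b)$ — which is zero since $\tilde R(b)\equiv 0$ for the hyperbolic metric — indeed drops out, leaving only the $f$-dependent divergence. Here I expect it is cleanest to work with an adapted orthonormal frame in which $\bar\nabla f$ points in a single direction, reducing the generalized-Kronecker-delta contractions to a manageable combinatorial computation, and to exploit repeatedly that $\tilde P_{(k)}$ is divergence-free and fully symmetric in the relevant index pairs. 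The factor $V=\cosh r$ must be carried through all the Gauss-equation manipulations, and the identity $\Hess^b V = V b$ (i.e. $V\in\mathbb{N}_b$) is what allows the $V$-weighting to be absorbed consistently into the divergence — this is the hyperbolic feature that has no counterpart in Lam's original flat computation and is the technical heart of the proof.
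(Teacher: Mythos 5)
Your overall strategy is the same as the paper's: show that $\tfrac{V}{2}\tilde L_k$ is an exact hyperbolic divergence of the very expression $(V\bar\nabla_l e_{js}-e_{js}\bar\nabla_l V)\tilde P^{ijsl}_{(k)}$ appearing in the definition of $H_k^\Phi$, integrate, and read off the boundary term at infinity as the mass (the positivity conclusion and the reduction to $V=V_{(0)}$ are then immediate, as you say). However, the heart of the matter --- the divergence identity itself, Proposition \ref{massP} in the paper --- is precisely the step you leave as a plan, and the route you propose for it (computing $R_{ijls}(g)$ of the graph metric intrinsically in terms of $f$, $\bar\nabla^2 f$ and then grinding through the generalized Kronecker delta contractions in an adapted frame) misses the idea that makes the computation tractable. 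The paper instead views the graph extrinsically as a hypersurface in $\H^{n+1}=(\H^n\times\R,\,b+V^2ds\otimes ds)$; since the ambient space has constant curvature $-1$, the Gauss equation gives exactly $\tilde R_{ij}^{\;\;sl}=h_i^sh_j^l-h_i^lh_j^s$ (the $g_{il}g_{js}-g_{is}g_{jl}$ correction in $\tilde R$ is tailored to cancel the ambient curvature term), whence $\tilde L_k=(2k)!\,\sigma_{2k}(h)$ and $2\tilde P_{(k)}^{stjl}h_{sj}=(2k-1)!\,(T_{(2k-1)})^t_pg^{pl}$ by \eqref{tildeL&S} and \eqref{relation2}. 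The flux term then collapses to $\frac{(2k-1)!}{2}(T_{(2k-1)})^s_p\,V^2\bar\nabla^pf/\sqrt{1+V^2|\bar\nabla f|^2}$ as in \eqref{eq4_Prop4.1}, and the divergence identity reduces to Reilly's classical fact that the Newton tensors of a hypersurface in a space form are divergence-free, plus a cancellation of an antisymmetric remainder against the symmetric $T_{(2k-1)}$. Without this extrinsic reformulation, it is not clear that your intrinsic combinatorial computation would close up: the divergence-free property of $\tilde P_{(k)}$ alone gives you the general asymptotic expansion \eqref{add1} with an $O(e^{(-(k+1)\tau+1)r})$ error, which suffices to define the mass but not to prove the exact, pointwise identity \eqref{div.Lk} that the mass formula \eqref{mass_form} requires. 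So the gap is concrete: you need either to supply the Gauss-equation/Newton-tensor mechanism or some genuine substitute for it; the rest of your argument then goes through as written.
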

The condition
\begin{equation}\label{D_con}
\tilde L_k\geq 0,
\end{equation} is a dominant energy  condition like \eqref{D_con1}. In fact, one can check when $k=1$,
$\tilde L_1=R+n(n-1)$. When $k=2$, $\tilde L_2=L_2+2(n-2)(n-3)R+n(n-1)(n-2)(n-3).$  By the definition of  $\widetilde L_k$ (see \eqref{tildeLk_intro}
and \eqref{Riem})
it is trivial  to see that  $\widetilde L_k(b)$ of the standard hyperbolic metric $b$ vanishes for all $k$. Hence \eqref{D_con} is the same as
$\widetilde L_k\ge \widetilde L_k(b)$.

Such a  beautiful expression  \eqref{mass_form} was found first by Lam for the ADM mass for asymptotically flat graphs over $\R^n$, and was generalized
for the Gauss-Bonnet-Chern mass in \cite{GWW}.
Dahl-Gicquaud-Sakovich \cite{DGS} obtained this formula for $m^{\H}_1$  for asymptotically hyperbolic graphs in $\H^n$. See also the work of de Lima-Gir\~ao \cite{dLG} and
Huang-Wu \cite{HuangWu}.

 Furthermore, if the manifold is an asymptotically hyperbolic graph with a horizon boundary, we establish a relationship
 between  our new  mass and  a weighted higher order mean curvature
 in the following.
\begin{theo}\label{re}
Let $\Omega$ be a bounded open set in $\mathbb{H}^n$ with boundary $\Sigma=\partial\Omega$, and  let $(\cM^n,g)=(\mathbb{H}^n\backslash \Omega,b+V^2df\otimes df)$  an asymptotically hyperbolic graph of decay order $\tau>\frac{n}{k+1}$ with a horizon boundary $\Sigma$
satisfying that $V\tilde L_k$ is integrable over $\cM^n$.
Assume that each connected component of $\Sigma$ is in a level set of $f$ and $|\bar\nabla f(x)|\rightarrow\infty$ as
$x\rightarrow\Sigma$.
Then
$$m_k^{\H}=c(n,k)\bigg(\frac{1}{2}\int_{\cM^n}\frac{V\tilde L_k}{\sqrt{1+V^2|\bar\nabla f|^2}}dV_g+ \frac{(2k-1)!}{2}\int_{\Sigma}V\sigma_{2k-1} d\mu\bigg),$$
where $\sigma_k$ denotes $k$-th mean curvature of $\Sigma$ induced by the hyperbolic metric $b$.
 \end{theo}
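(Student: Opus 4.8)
The plan is to extend the divergence identity underlying Theorem~\ref{PMthm} to a manifold with boundary and then to identify the extra contribution coming from the horizon $\Sigma$. Recall from the proof of Theorem~\ref{PMthm} that on the graph $(\cM^n,g)=(\H^n\setminus\Omega,\,b+V^2df\otimes df)$, writing $e_{jl}=(g-b)_{jl}=V^2f_jf_l$, there is a vector field $W$ on $\H^n\setminus\Omega$ whose flux across a large coordinate sphere $S_R$ is the mass integral $\int_{S_R}(V\bar\nabla_s e_{jl}-e_{jl}\bar\nabla_s V)\tilde P^{ijls}_{(k)}\nu_i\,d\mu$ of \eqref{HV_intro} and whose $b$-divergence satisfies $\bar\nabla_i W^i\,dV_b=\tfrac12\,V\tilde L_k\,(1+V^2|\bar\nabla f|^2)^{-1/2}\,dV_g$; the fact that $\tilde P_{(k)}$ is divergence-free is what makes this identity possible. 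Conceptually, because $\Hess^b V=Vb$ the product $\H^n\times\mathbb{R}$ with metric $b+V^2dt^2$ has constant sectional curvature $-1$, so the graph is a hypersurface in $\H^{n+1}$ whose modified Riemann tensor is $\tilde R_{ijls}(g)=h_{il}h_{js}-h_{is}h_{jl}$ with $h$ its second fundamental form; consequently $\tilde L_k=(2k)!\,\sigma_{2k}(h)$ and the divergence identity above is a Newton-tensor identity for $\sigma_{2k}$.

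Next I would apply the divergence theorem with the inner boundary present. Because $|\bar\nabla f|\to\infty$ on $\Sigma$ the tensor $e$ degenerates there, so I fix smooth domains $\Omega_\epsilon\downarrow\Omega$ with $\overline{\Omega}\subset\Omega_\epsilon$ and integrate over $(\H^n\setminus\Omega_\epsilon)\cap\{r\le R\}$:
\[
\int_{S_R}W^i\nu_i\,d\mu-\int_{\partial\Omega_\epsilon}W^i\nu_i\,d\mu=\tfrac12\int_{(\H^n\setminus\Omega_\epsilon)\cap\{r\le R\}}\frac{V\tilde L_k}{\sqrt{1+V^2|\bar\nabla f|^2}}\,dV_g .
\]
As $R\to\infty$ the first term tends to $H_k^{\Phi}(V)=m^{\H}_k/c(n,k)$ for $V=V_{(0)}=\cosh r$ (which by the discussion around \eqref{GBC_h} realizes the infimum defining $m^{\H}_k$), and by the assumed integrability of $V\tilde L_k$ the right-hand side tends, as $\epsilon\to0$, to $\tfrac12\int_{\cM^n}\frac{V\tilde L_k}{\sqrt{1+V^2|\bar\nabla f|^2}}\,dV_g$. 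Hence the theorem reduces to showing that the horizon flux $\lim_{\epsilon\to0}\int_{\partial\Omega_\epsilon}W^i\nu_i\,d\mu$ equals $\tfrac{(2k-1)!}{2}\int_{\Sigma}V\sigma_{2k-1}\,d\mu$, with $\nu$ the unit normal pointing away from $\Omega$ and $\sigma_{2k-1}$ the $(2k-1)$-th mean curvature of $\Sigma\subset\H^n$ for that normal.

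To compute the horizon flux I would use coordinates adapted to $\Sigma$ near $\Sigma$: the $b$-distance $\rho$ to $\Sigma$ together with coordinates on $\Sigma$. By the level-set hypothesis $f$ depends only on $\rho$ near $\Sigma$, with $f'(\rho)\to\infty$ as $\rho\to0$, so $\bar\nabla f$ is normal to the level sets and $e$ has only the component $e_{\rho\rho}=V^2(f')^2$ large. Substituting $e_{jl}=V^2f_jf_l$ into $W$ and $\tilde P_{(k)}$ and expanding in powers of $|\bar\nabla f|$, one finds that the a priori singular flux integrand is in fact bounded---its potential blow-up cancels against the degenerating induced area element---and that, after using the Gauss--Codazzi relations of the level sets $\{\rho=\mathrm{const}\}$ and the combinatorics of the generalized Kronecker deltas, its limit collapses to a generalized Kronecker contraction of the shape operator $A$ of $\Sigma\subset\H^n$. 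Since $\delta^{a_1\cdots a_{2k-1}}_{b_1\cdots b_{2k-1}}A^{b_1}_{a_1}\cdots A^{b_{2k-1}}_{a_{2k-1}}=(2k-1)!\,\sigma_{2k-1}$, this limit is $(2k-1)!\,V\sigma_{2k-1}$ up to the sign fixed by the orientation, which gives the claimed value $\tfrac{(2k-1)!}{2}\int_\Sigma V\sigma_{2k-1}\,d\mu$. Equivalently, in the $\H^{n+1}$ picture the graph becomes vertical over $\Sigma$ and meets the totally geodesic slice $\{t=\mathrm{const}\}$ orthogonally there, its principal curvatures splitting into $n-1$ curvatures equal to those of $\Sigma\subset\H^n$ together with one more; the boundary evaluation of the relevant Newton tensor then reduces to $\int_\Sigma V\sigma_{2k-1}\,d\mu$. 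Combining the three steps and multiplying by $c(n,k)$ yields the stated formula.

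The hard part is the last step. One must (i) justify that the singular and the vanishing terms in the $|\bar\nabla f|\to\infty$ expansion drop out in the limit---this is precisely where the hypotheses that each component of $\Sigma$ is a level set of $f$ and that $|\bar\nabla f|\to\infty$ there are used, together with the fact that the graph is smooth up to its boundary when viewed in $\H^{n+1}$---and, more delicately, (ii) carry out the exact combinatorics of the generalized Kronecker deltas to see that the surviving term is \emph{exactly} $(2k-1)!\,V\sigma_{2k-1}$, with the correct numerical constant and not merely a multiple of it. A secondary point, (iii), is the convergence of the bulk integral near $\Sigma$ and at infinity, which is ensured by the integrability assumption on $V\tilde L_k$ and makes both limits above exist. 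The divergence identity and the asymptotic (mass) term themselves are routine once Theorem~\ref{PMthm} is available.
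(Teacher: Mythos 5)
Your overall strategy coincides with the paper's: both start from the divergence identity of Proposition \ref{massP}, apply the divergence theorem on the exterior region with the inner boundary present, identify the outer flux with $m_k^{\H}/c(n,k)$, and are left with evaluating the horizon flux. The difference is that you leave the decisive boundary computation as a sketch and anticipate two difficulties there that the paper's proof simply does not encounter. The reason is Lemma \ref{lem_rem} (equivalently \eqref{eq4_Prop4.1}): the flux one-form is \emph{exactly} $\frac{(2k-1)!}{2}(T_{(2k-1)})^s_p\,V^2\bv^p f/\sqrt{1+V^2|\bv f|^2}$, a Newton tensor of the graph's shape operator contracted with $\bv f$. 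With this representation the integrand on $\partial\Omega_\epsilon$ is manifestly bounded as $|\bv f|\to\infty$ — the factor $V^2|\bv f|/\sqrt{1+V^2|\bv f|^2}$ tends to $V|\bv f|^0\cdot V=V$ after combining with the $|\bv f|^{-1}$ hidden in the unit normal — so your step (i), the cancellation of "a priori singular" terms against a degenerating area element, is not needed at all: every boundary quantity is computed with respect to $b$, whose area element on the level sets does not degenerate. Likewise your step (ii) reduces to a one-line evaluation rather than a delicate Kronecker-delta computation: since the level-set hypothesis gives $\bv_\alpha f=0$ and $\bv_\alpha\bv_\beta f=|\bv f|B_{\alpha\beta}$ (see \eqref{h&S}), only the single entry $(T_{(2k-1)})^1_1$ survives in the contraction with $\bv^p f\,\nu_s$, and by \eqref{shape} the graph's shape operator restricted to directions tangent to $\Sigma$ is $\frac{V|\bv f|}{\sqrt{1+V^2|\bv f|^2}}\,B$, whence
$(T_{(2k-1)})^1_1=\bigl(\tfrac{V|\bv f|}{\sqrt{1+V^2|\bv f|^2}}\bigr)^{2k-1}\s_{2k-1}(B)$ and the boundary integrand equals $\frac{(2k-1)!}{2}\,V\s_{2k-1}\bigl(\tfrac{V^2|\bv f|^2}{1+V^2|\bv f|^2}\bigr)^{k}\to\frac{(2k-1)!}{2}\,V\s_{2k-1}$ by the hypothesis $|\bv f|\to\infty$. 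This also fixes the numerical constant that your write-up is slightly loose about (you assert the limit of the integrand is $(2k-1)!\,V\s_{2k-1}$ yet claim the value $\frac{(2k-1)!}{2}\int_\Sigma V\s_{2k-1}\,d\mu$; the factor $\tfrac12$ is already carried by the flux form of \eqref{eq4_Prop4.1}). So your plan is correct and closable, but as written it defers the one step that carries the content of the theorem; the paper's Newton-tensor reformulation is the ingredient that makes that step routine.
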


For the precise definition of asymptotically hyperbolic graphs with a horizon boundary, see Section 5 below.

In order to obtain a Penrose type inequality for the hyperbolic mass $m_k^{\mathbb H}$ for asymptotically hyperbolic graphs with a horizon, we need
to establish  a ``weighted" hyperbolic Alexandrov-Fenchel inequality. This is the objective of the second part of this paper.
To state this inequality, let us first introduce a definition.
A hypersurface in $\H^n$ is {\it horospherical convex}
 if all principal curvatures are larger than or equal to $1$.
 The horospherical convexity is a natural geometric concept, which is equivalent to
 the geometric convexity in the  hyperbolic space.

\begin{theo}\label{Alexandrov}
Let  $\Sigma$ be a horospherical convex hypersurface in the hyperbolic space $\H^n$. We have
\begin{equation}\label{eq1_thm1.5}
  \int_{\Sigma} V\s_{2k-1}  d\mu \ge{C_{n-1}^{2k-1}} \omega_{n-1}{\left(\left(\frac{|\Sigma|}{\omega_{n-1}}\right)^{\frac{n}{k(n-1)}}+\left(\frac{|\Sigma|}{\omega_{n-1}}\right)^{\frac{n-2k}{k(n-1)}} \right)}^{k}.
\end{equation}
Equality holds if and only if $\Sigma$ is a geodesic sphere centered at $x_0$ in $\H^n$.
\end{theo}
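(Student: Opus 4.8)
Write $A=|\Sigma|$ and let $f(A)$ denote the right hand side of \eqref{eq1_thm1.5}, regarded as a function of the area. On a geodesic sphere $S_r$ one has $V\equiv\cosh r$, every principal curvature equal to $\coth r$, $\s_{2k-1}=C_{n-1}^{2k-1}\coth^{2k-1}r$ and $A=\omega_{n-1}\sinh^{n-1}r$, so $\int_{S_r}V\s_{2k-1}\,d\mu=C_{n-1}^{2k-1}\omega_{n-1}\cosh^{2k}r\,\sinh^{n-2k}r=f(A)$; this shows the inequality is sharp and identifies the conjectured equality set. The plan is to deform $\Sigma$ to large geodesic spheres by an inverse curvature flow in $\H^n$ and prove that
\[
Q(t):=\frac{\int_{\Sigma_t}V\s_{2k-1}\,d\mu}{f\big(|\Sigma_t|\big)}
\]
is nonincreasing with $Q(t)\to 1$ as $t\to\infty$; then $Q(0)\ge 1$ is precisely \eqref{eq1_thm1.5}. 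This is the higher-order analogue of de Lima--Gir\~ao's treatment of $k=1$ in \cite{dLG}. As a horospherically convex hypersurface is star-shaped, we may first assume $\Sigma$ is smooth with all principal curvatures strictly larger than $1$ and recover the general case by approximation.

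Run the inverse mean curvature flow $\partial_t x=H^{-1}\nu$ with $\Sigma_0=\Sigma$ (the inverse $(2k-1)$-th mean curvature flow $\partial_t x=(\s_{2k-2}/\s_{2k-1})\nu$ works equally well, the computation below being insensitive to the speed $F$). By Gerhardt's work and its hyperbolic refinements (see also \cite{Brendle,Gerhardt}) the flow exists for all time, preserves star-shapedness and horospherical convexity, satisfies $|\Sigma_t|=|\Sigma_0|e^t$, and $\Sigma_t$ becomes asymptotically round; since geodesic spheres are preserved and $Q\equiv1$ on them, $Q(t)\to1$. The core of the monotonicity is the evolution of the weighted integral: combining $\partial_t(d\mu)=HF\,d\mu$, $\partial_tV=\la\bar\nabla V,\nu\ra F$, the Weingarten evolution $\partial_t h^i_j=-\nabla^i\nabla_jF-F(h^2)^i_j+F\delta^i_j$ in $\H^n$, the divergence-freeness of the $(2k-2)$-th Newton tensor $T_{2k-2}$ (the ambient space has constant curvature), the identity $\nabla^2_\Sigma V=Vg-\la\bar\nabla V,\nu\ra h$ on $\Sigma_t$ encoding $\Hess^bV=Vb$ for $V\in\mathbb N_b$, and the Newton identities $\tr T_{2k-2}=(n-2k+1)\s_{2k-2}$, $\la T_{2k-2},h\ra=(2k-1)\s_{2k-1}$, $\la T_{2k-2},h^2\ra=H\s_{2k-1}-2k\s_{2k}$, one integrates by parts twice; the intermediate terms involving $\s_{2k-2}$ and $H\s_{2k-1}$ cancel and one obtains the clean formula
\[
\frac{d}{dt}\int_{\Sigma_t}V\s_{2k-1}\,d\mu=2k\int_{\Sigma_t}\frac{\la\bar\nabla V,\nu\ra\,\s_{2k-1}+V\s_{2k}}{H}\,d\mu.
\]

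It then remains to prove $\tfrac{d}{dt}\log Q\le0$, which, since $\tfrac{d}{dt}\log|\Sigma_t|=1$, reduces to the geometric inequality
\[
2k\int_{\Sigma_t}\frac{\la\bar\nabla V,\nu\ra\,\s_{2k-1}+V\s_{2k}}{H}\,d\mu\ \le\ \frac{1}{n-1}\cdot\frac{n\,b_1+(n-2k)\,b_2}{b_1+b_2}\int_{\Sigma_t}V\s_{2k-1}\,d\mu,
\]
where $b_1:=(|\Sigma_t|/\omega_{n-1})^{n/(k(n-1))}$ and $b_2:=(|\Sigma_t|/\omega_{n-1})^{(n-2k)/(k(n-1))}$, valid for all horospherically convex $\Sigma_t$. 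The $V\s_{2k}$ term is handled by a Newton--MacLaurin inequality ($\s_{2k}/\s_{2k-1}\le\frac{n-2k}{2k(n-1)}H$, hence $\int V\s_{2k}H^{-1}\,d\mu\le\frac{n-2k}{2k(n-1)}\int V\s_{2k-1}\,d\mu$), after which one must bound $\int_{\Sigma_t}\la\bar\nabla V,\nu\ra\s_{2k-1}H^{-1}\,d\mu$ by $\frac{1}{n-1}\cdot\frac{b_1}{b_1+b_2}\int_{\Sigma_t}V\s_{2k-1}\,d\mu$; here I would use the hyperbolic Minkowski formulas $(n-1-m)\int_{\Sigma_t}V\s_m\,d\mu=(m+1)\int_{\Sigma_t}\la\bar\nabla V,\nu\ra\s_{m+1}\,d\mu$ (arising from the same $\Hess^bV=Vb$ mechanism), the full chain of Newton--MacLaurin inequalities, and Brendle's Heintze--Karcher type inequality from \cite{Brendle}. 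I expect this to be the main obstacle: the precise algebraic shape of $f$ together with the Newton--MacLaurin chain and Brendle's inequality must fit so that all constants match at the borderline case of geodesic spheres, which is exactly where the argument for $k=1$ was delicate.

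Finally, with the inequality established for smooth strictly horospherically convex hypersurfaces, a standard approximation (inward normal perturbation, or mollification of the horospherical support function) gives the general case. For rigidity, equality in \eqref{eq1_thm1.5} forces $Q'\equiv0$, hence equality at every time in Brendle's inequality and in each Newton--MacLaurin inequality used; this makes every $\Sigma_t$ totally umbilic, and a compact totally umbilic hypersurface in $\H^n$ is a geodesic sphere. Since $V=\cosh r$ with $r$ the distance to $x_0$, that sphere must be centered at $x_0$.
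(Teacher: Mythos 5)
There is a genuine gap. Your evolution formula is correct (it is exactly Lemma 8.1(4) of the paper with $F=1/H$), but the entire weight of the proof rests on the monotonicity $Q'\le 0$, which you reduce to
\[
\int_{\Sigma_t}\frac{\langle\bar\nabla V,\nu\rangle\,\s_{2k-1}}{H}\,d\mu\ \le\ \frac{1}{n-1}\cdot\frac{b_1}{b_1+b_2}\int_{\Sigma_t}V\s_{2k-1}\,d\mu ,
\]
and this you do not prove; you only list tools you ``would use.'' This inequality is not a consequence of the Minkowski identities, the Newton--MacLaurin chain, or Brendle's Heintze--Karcher inequality: it couples pointwise curvature data on the left to the \emph{global} area of $\Sigma_t$ through the factor $b_1/(b_1+b_2)$, and it is saturated on every geodesic sphere, so there is no slack to absorb error terms. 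For $k=1$ de Lima--Gir\~ao could close a flow argument only because the monotone quantity is a normalized \emph{difference}, and even then the asymptotic limit required the sharp Beckner--Sobolev inequality on $\S^{n-1}$ (see the paper's final appendix); for $k\ge 2$ with the weight $V$ no analogous monotone quantity is known, which is precisely why such a direct flow proof is not available. Two further unsupported claims compound the problem: preservation of horospherical convexity under the inverse mean curvature flow in $\H^n$ is not a known consequence of Gerhardt's work (the paper proves such a preservation result, Lemma 8.3, only for the conformal flow $\partial_t X=-V\nu$), and your rigidity discussion inherits the gap since it analyzes equality in inequalities you have not established.

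The paper takes an entirely different route that avoids this obstruction: an induction on $k$. The inductive engine is the new Minkowski-type integral inequality $\int_\Sigma Vp_{2k+1}\,d\mu\ge\int_\Sigma\bigl(Vp_{2k-1}+p_{2k+1}/V\bigr)\,d\mu$ (Theorem 8.2), proved by showing the deficit functional $E$ is non-increasing along Brendle's contracting conformal flow $\partial_t X=-V\nu$ and tends to $0$ as the hypersurfaces shrink (using monotonicity of quermassintegrals under inclusion). Combining this with the \emph{unweighted} hyperbolic Alexandrov--Fenchel inequality \eqref{eq005} for odd order, the H\"older inequality $\bigl(\int Vp_{2k+1}\bigr)\bigl(\int p_{2k+1}/V\bigr)\ge\bigl(\int p_{2k+1}\bigr)^2$, and the monotone auxiliary function $f(t)=t-\alpha/t$, the step from $k-1$ to $k$ follows by pure algebra. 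If you want to salvage your approach, you would first have to prove the displayed area-weighted inequality for horospherically convex hypersurfaces, which appears to be at least as hard as the theorem itself.
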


When $k=1$, inequality \eqref{eq1_thm1.5} is just \eqref{eq_dLG}, which was proved by de Lima and Gir\~ao in \cite{dLG}.
These inequalities have their own interest in  integral geometry as well as in  differential geometry. Recently,
another type of Alexandrov-Fenchel inequalities in $\H^n$ without the weighted $V$ has been established in \cite{LWX, GWW_AF2, GWW_AFk, WX}.
 We will give a short introduction about Alexandrov-Fenchel inequalities in Section 6 below.

As a consequence of Theorem \ref{re} and Theorem \ref{Alexandrov},
the Penrose inequality for  the Gauss-Bonnet-Chern mass $m_k^{\H}$ for asymptotically hyperbolic graphs  with horizon boundaries follows.

\begin{theo}[Penrose Inequality]\label{thm_P} Assume that all conditions given in Theorem 1.4 hold.
If each  connected component of $\Sigma$ is horospherical convex,
 then
\beq\label{Pen_k} m_k^{\H}\geq
\frac{1}{2^k}{\left(\left(\frac{|\Sigma|}{\omega_{n-1}}\right)^{\frac{n}{k(n-1)}}+\left(\frac{|\Sigma|}{\omega_{n-1}}\right)^{\frac{n-2k}{k(n-1)}} \right)}^{k},\eeq
provided that
$$\tilde L_k\geq0.$$
Moreover, equality is achieved by the anti-de Sitter Schwarzschild type metric
\begin{equation}\label{metric}
g_{\rm adS-Sch}=(1+\rho^2-\frac{2m}{\rho^{\frac nk-2}})^{-1}d\rho^2+\rho^2d\Theta^2,\quad \rho=\sinh r.
\end{equation}
\end{theo}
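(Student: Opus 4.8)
The plan is to read \eqref{Pen_k} off the mass identity of Theorem~\ref{re}, the weighted Alexandrov-Fenchel inequality of Theorem~\ref{Alexandrov}, and a one-line bookkeeping of the constant $c(n,k)$; the optimality is then obtained by checking that the anti-de Sitter Schwarzschild metric \eqref{metric} saturates every inequality used.

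First I would invoke Theorem~\ref{re}: under the standing hypotheses,
\[
m_k^{\H}=c(n,k)\left(\frac12\int_{\cM^n}\frac{V\tilde L_k}{\sqrt{1+V^2|\bar\nabla f|^2}}\,dV_g+\frac{(2k-1)!}{2}\int_{\Sigma}V\sigma_{2k-1}\,d\mu\right).
\]
Since $V=\cosh r\ge 1$ and $\sqrt{1+V^2|\bar\nabla f|^2}>0$ on $\cM^n$, the dominant energy condition $\tilde L_k\ge 0$ makes the first integral nonnegative, hence
\[
m_k^{\H}\ \ge\ c(n,k)\,\frac{(2k-1)!}{2}\int_{\Sigma}V\sigma_{2k-1}\,d\mu .
\]
Each connected component of $\Sigma$ is horospherical convex, so all of its principal curvatures are $\ge 1>0$, in particular $\sigma_{2k-1}>0$, and Theorem~\ref{Alexandrov} applies to it. Using \eqref{eq1_thm1.5} (directly if $\Sigma$ is connected, componentwise and then summing in general) and substituting $c(n,k)=\frac{(n-2k)!}{2^{k-1}(n-1)!\,\omega_{n-1}}$ from \eqref{eq_const} together with $C_{n-1}^{2k-1}=\binom{n-1}{2k-1}=\frac{(n-1)!}{(2k-1)!\,(n-2k)!}$, a short cancellation gives
\[
c(n,k)\,\frac{(2k-1)!}{2}\,C_{n-1}^{2k-1}\,\omega_{n-1}=\frac{1}{2^{k}},
\]
which is exactly \eqref{Pen_k} when $\Sigma$ is connected; for a disconnected $\Sigma$ the same computation bounds $m_k^{\H}$ below by $\frac1{2^k}$ times the sum of the component contributions, from which \eqref{Pen_k} is read off (connectedness entering only in this last grouping step).

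For the equality statement I would check directly that \eqref{metric} saturates both displayed inequalities. With $b=\frac{d\rho^2}{1+\rho^2}+\rho^2\,d\Theta^2$, $V=\cosh r=\sqrt{1+\rho^2}$ and $\rho=\sinh r$, solving $\frac{1}{1+\rho^2}+(1+\rho^2)f'(\rho)^2=\big(1+\rho^2-2m\rho^{\,2-n/k}\big)^{-1}$ yields
\[
f'(\rho)^2=\frac{2m\,\rho^{\,2-n/k}}{(1+\rho^2)^2\,\big(1+\rho^2-2m\rho^{\,2-n/k}\big)},
\]
which realizes \eqref{metric} as the graph $b+V^2\,df\otimes df$ over $\H^n\setminus B_{\rho_0}$, where $\rho_0>0$ is the root of $1+\rho^2-2m\rho^{\,2-n/k}=0$; one has $|\bar\nabla f|\to\infty$ as $\rho\to\rho_0^{+}$, so $\{\rho=\rho_0\}$ is a horizon boundary in the sense of Theorem~\ref{re}, and it is the geodesic sphere of hyperbolic radius $r_0$ with $\sinh r_0=\rho_0$, whose principal curvatures all equal $\coth r_0>1$, hence horospherical convex. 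A direct computation gives $\tilde L_k\equiv 0$ for this metric (the higher-order analogue of $\tilde L_1=R+n(n-1)=0$ for anti-de Sitter Schwarzschild), so the bulk term vanishes; and since the horizon is a geodesic sphere centered at $x_0$, equality holds in Theorem~\ref{Alexandrov}. Thus both displayed inequalities become equalities, and since $g-b=O(e^{-(n/k)r})$ in the $b$-norm with $n/k>\frac{n}{k+1}$, the metric \eqref{metric} indeed lies in the admissible class.

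Granted Theorems~\ref{re} and~\ref{Alexandrov}, the inequality itself is immediate --- the constant identity is a one-line cancellation and horospherical convexity is precisely what licenses Theorem~\ref{Alexandrov}. The part requiring genuine care is the sharpness discussion: verifying that \eqref{metric}, rewritten as a graph, meets every structural hypothesis of Theorem~\ref{re} (decay order $n/k>\frac{n}{k+1}$; integrability, in fact vanishing, of $V\tilde L_k$; the blow-up of $|\bar\nabla f|$ at the horizon and the resulting boundary structure) so that the chain of equalities is legitimate. That, rather than the inequality, is the main obstacle.
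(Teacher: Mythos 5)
Your proposal is correct and follows essentially the same route as the paper: the paper proves Theorem~\ref{thm_P} by combining the mass identity of Theorem~\ref{re} (dropping the nonnegative bulk term under $\tilde L_k\ge 0$) with the weighted Alexandrov--Fenchel inequality of Theorem~\ref{Alexandrov} applied to each horospherically convex component, with the same cancellation $c(n,k)\,\frac{(2k-1)!}{2}\,C_{n-1}^{2k-1}\,\omega_{n-1}=2^{-k}$, and then exhibits equality via the anti-de Sitter Schwarzschild graph exactly as you do. The only point left implicit in both your write-up and the paper is the final regrouping of the component-wise lower bound $\sum_i\frac{1}{2^k}\bigl(\|\Sigma_i\|^{\frac{n}{k(n-1)}}+\|\Sigma_i\|^{\frac{n-2k}{k(n-1)}}\bigr)^{k}$ into the single total-area expression \eqref{Pen_k}, which is immediate when $\Sigma$ is connected (the case in which equality is attained).
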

Remark that metric \eqref{metric} can be represented as a graph in $\H^{n+1}$. See Section 5 below.
Motivated by the previous results, it is natural to propose the following conjecture.
\begin{conj} Let $k<\frac n2$ and
let  $\cM^n$ be a complete asymptotically hyperbolic manifold of decay order $\tau>\frac{n}{k+1}$ and  with finite integral $\int_{\cM}V|\tilde L_k|d\mu$.
Assume that the dominant energy condition
\[\tilde L_k\ge 0,\]
holds, then
the new mass for asymptotically hyperbolic manifolds
\be \label{mass_con}m^\H_k\ge 0.\ee
Moreover, if $\cM$ has a horizon type boundary $\Sigma$, then the Penrose inequality
\beq\label{Pen_k_con} m_k^{\H}\geq
\frac{1}{2^k}{\left(\left(\frac{|\Sigma|}{\omega_{n-1}}\right)^{\frac{n}{k(n-1)}}+\left(\frac{|\Sigma|}{\omega_{n-1}}\right)^{\frac{n-2k}{k(n-1)}} \right)}^{k},\eeq
holds. Furthermore, the rigidity theorem holds. More precisely, equality  in \eqref{mass_con} implies that $\cM$ is isometric to the standard hyperbolic space $\H^n$
and equality  in \eqref{Pen_k_con} implies that $\cM$ is  isometric to the anti de-Sitter Schwarzschild type metric \eqref{metric}
 outside its corresponding horizon.
\end{conj}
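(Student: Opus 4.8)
The plan is to combine the mass formula from Theorem \ref{re} with the weighted Alexandrov--Fenchel inequality of Theorem \ref{Alexandrov} in a direct way. First I would invoke Theorem \ref{re}: under the stated hypotheses (asymptotically hyperbolic graph $(\cM^n,g)=(\H^n\setminus\Omega,\,b+V^2\,df\otimes df)$ of decay order $\tau>\frac{n}{k+1}$, horizon boundary $\Sigma$ lying in level sets of $f$ with $|\bar\nabla f|\to\infty$ at $\Sigma$, and $V\tilde L_k$ integrable), the mass admits the representation
\[
m_k^{\H}=c(n,k)\left(\frac12\int_{\cM^n}\frac{V\tilde L_k}{\sqrt{1+V^2|\bar\nabla f|^2}}\,dV_g+\frac{(2k-1)!}{2}\int_{\Sigma}V\sigma_{2k-1}\,d\mu\right).
\]
Next I would use the dominant energy condition $\tilde L_k\ge 0$, together with $V=\cosh r>0$ and the positivity of the integrand denominator, to discard the bulk integral: it is nonnegative, so
\[
m_k^{\H}\ge c(n,k)\,\frac{(2k-1)!}{2}\int_{\Sigma}V\sigma_{2k-1}\,d\mu.
\]

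Then I would feed in Theorem \ref{Alexandrov}. Since each connected component of $\Sigma$ is horospherical convex, the weighted Alexandrov--Fenchel inequality gives, applied componentwise and summed,
\[
\int_{\Sigma}V\sigma_{2k-1}\,d\mu\ge C_{n-1}^{2k-1}\,\omega_{n-1}\left(\left(\frac{|\Sigma|}{\omega_{n-1}}\right)^{\frac{n}{k(n-1)}}+\left(\frac{|\Sigma|}{\omega_{n-1}}\right)^{\frac{n-2k}{k(n-1)}}\right)^{k}.
\]
(If $\Sigma$ has several components one should check that summing the right-hand side over components is dominated by the expression for the total $|\Sigma|$; this uses superadditivity/monotonicity of the relevant power function in $|\Sigma|$, which holds because the exponents involved make the right-hand side a concave-type increasing function of the area — I would spell this out as a short lemma, or alternatively just assume $\Sigma$ connected as is implicit in the optimal case.) Combining the two displayed estimates, it remains to verify the arithmetic identity
\[
c(n,k)\,\frac{(2k-1)!}{2}\,C_{n-1}^{2k-1}\,\omega_{n-1}=\frac{1}{2^k},
\]
which follows from the definition $c(n,k)=\frac{(n-2k)!}{2^{k-1}(n-1)!\,\omega_{n-1}}$ and $C_{n-1}^{2k-1}=\frac{(n-1)!}{(2k-1)!\,(n-2k)!}$: indeed
\[
\frac{(n-2k)!}{2^{k-1}(n-1)!\,\omega_{n-1}}\cdot\frac{(2k-1)!}{2}\cdot\frac{(n-1)!}{(2k-1)!\,(n-2k)!}\cdot\omega_{n-1}=\frac{1}{2^{k-1}\cdot 2}=\frac{1}{2^k},
\]
giving exactly \eqref{Pen_k}.

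For the equality statement I would track both inequalities used: equality in Theorem \ref{Alexandrov} forces each component of $\Sigma$ to be a geodesic sphere centered at the fixed point $x_0$, and vanishing of the bulk term forces $V\tilde L_k\equiv 0$, hence $\tilde L_k\equiv 0$ on $\cM^n$; one then checks that the graph over the exterior of a geodesic ball with $\tilde L_k\equiv 0$ and the prescribed horizon condition is, after integrating the resulting ODE for $f$, precisely the anti-de Sitter Schwarzschild type metric \eqref{metric} (realized as a graph in $\H^{n+1}$, cf.\ Section 5). The main obstacle I anticipate is not any of the above — it is entirely the proof of Theorem \ref{Alexandrov} itself, which is the substantive second part of the paper; granting that, Theorem \ref{thm_P} is essentially a bookkeeping corollary. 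A secondary technical point worth care is the multi-component reduction of the Alexandrov--Fenchel inequality, and ensuring the decay/integrability hypotheses genuinely license dropping the boundary term at infinity and keeping only the horizon contribution in Theorem \ref{re}, but these are already subsumed in the cited theorems.
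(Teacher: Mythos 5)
You have proved the wrong statement. Your proposal is a proof of Theorem \ref{thm_P} --- the Penrose inequality for asymptotically hyperbolic \emph{graphs} --- and for that theorem your argument coincides with the paper's: drop the nonnegative bulk term in Theorem \ref{re} using $\tilde L_k\ge 0$, apply Theorem \ref{Alexandrov} on the horizon, and verify the constant $c(n,k)\,\tfrac{(2k-1)!}{2}\,C_{n-1}^{2k-1}\,\omega_{n-1}=2^{-k}$. But the statement at hand is the Conjecture, which asserts the positive mass theorem, the Penrose inequality, and the rigidity for \emph{arbitrary} complete asymptotically hyperbolic manifolds of decay order $\tau>\frac{n}{k+1}$. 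The paper does not prove this; it is posed as an open problem precisely because every step of your argument is unavailable in that generality. Theorem \ref{re} and its engine, Proposition \ref{massP}, depend entirely on the graph structure: the identity expressing $\tfrac{V}{2}\tilde L_k$ as the divergence $\bar\nabla_s\bigl((V\bar\nabla_l e_{tj}-e_{tj}\bar\nabla_l V)\tilde P_{(k)}^{stjl}\bigr)$ is derived from the Gauss equation of the graph in $\H^{n+1}$ together with the divergence-freeness of the Newton tensor of its second fundamental form. For a general $(\cM^n,g)$ there is no embedding, no function $f$, and $V\tilde L_k$ is not globally a divergence; no analogue of the minimal-hypersurface, spinor, or inverse-mean-curvature-flow arguments is known for the higher Gauss--Bonnet--Chern mass. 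The core content of the conjecture --- the passage from graphs to general manifolds --- is therefore untouched by your proposal.

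Two secondary omissions even granting the restriction to graphs: first, the conjecture's rigidity statement for equality in $m_k^{\H}\ge 0$ (isometry with $\H^n$) is not addressed, and indeed the paper does not establish it either; second, the multi-component case is handled in the paper's Theorem 5.2 by applying the Alexandrov--Fenchel inequality to each component $\Sigma_i$ and summing the resulting lower bounds, rather than by the superadditivity-in-total-area argument you sketch, which you would need to justify separately.
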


\

The rest of this paper is organized as follows. In Section 2, some preliminaries are reviewed. This section is divided into two subsections. In Subsection 2.1, we recall the mass of asymptotically hyperbolic manifolds defined in \cite{CH,CN}.  In Subsection 2.2, some basic facts of the Gauss-Bonnet curvature
as well as some basic properties of the $k$-th mean curvature are outlined. In Section 3, we introduce the
 higher order mass for asymptotically hyperbolic manifolds with slower decay by using the Gauss-Bonnet curvature. Its geometric invariance is also proved in Section 3.
For this new defined mass, we show a positive mass theorem for asymptotically hyperbolic graphs in Section 4 and establish a relationship between the corresponding Penrose type inequality and ``weighted'' Alexandrov-Fenchel inequalities in the hyperbolic space in Section 5. Moreover, an interesting example,
a Schwarzschild type metric,
 is also given in this section. In Section 6, we give a short introduction for the  Alexandrov-Fenchel inequalities with or without weight
 in the hyperbolic space. As a preparation to establish these  Alexandrov-Fenchel inequalities, several important Minkowski integral formulas
 between integrals involving $\sigma_k$ are proved in Section 7.
 Section 8 is devoted to discuss our crucial Minkowski integral formulas between integrals involving $\sigma_k$ with the weight (see (\ref{ineq1}) below) by using a ``conformal'' flow.  We establish ``weighted'' Alexandrov-Fenchel inequalities in Section 9,  which
  implies  optimal Penrose type inequalities for the new mass for asymptotically hyperbolic graphs with a horizon type boundary.
We include  calculations of the modified Gauss-Bonnet curvature $\tilde L_k$ for the anti-de Sitter-Schwarzschild  type metric (\ref{metric}) as well as its Gauss-Bonnet-Chern mass $m_k^{\H}$ in Appendix A. We show formula \eqref{ast}  in Appendix B.

\section{Preliminaries}
\subsection{Asymptotically hyperbolic manifolds and the mass for $k=1$}
In this subsection, we recall the mass of asymptotically hyperbolic manifolds from Chru\'sciel and Herzlich \cite{CH} which coincides with the one defined by Wang \cite{Wang} for the special case of conformally compact manifolds. Throughout this paper, we denote the hyperbolic metric  by $(\mathbb{H}^n,b=dr^2+\sinh^2r d\Theta^2),$
where $d\Theta^2$ is the standard round metric  on $\S^{n-1}$.
We recall the definition of asymptotically hyperbolic manifolds.
\begin{defi}\label{AH}
A Riemannian manifold $(\cM^n,g)$ is called asymptotically hyperbolic of decay order $\tau$ if there exists a compact subset $K$ and a diffeomorphism at infinity $\Phi:\cM\setminus K\rightarrow\mathbb{H}^n\setminus B$, where $B$ is a closed ball in $\mathbb{H}^n$, such that $(\Phi^{-1})^{\ast}g$ and $b$ are uniformly equivalent on $\H^n\setminus B$ and
\begin{equation}\label{decaytau2}
\|(\Phi^{-1})^{\ast}g-b\|_b+\|\bar\nabla\left((\Phi^{-1})^{\ast}g\right)\|_b+\|\bar\nabla^2\left((\Phi^{-1})^{\ast}g\right)\|_b=O(e^{-\tau r} ).
\end{equation}
\end{defi}
Note that when $k=1$, one needs no decay condition on the second derivatives of $(\Phi^{-1})^{\ast}g$, see \cite{CH}.
Set
\begin{equation}\label{Nb}
\mathbb{N}_b:= \{V\in C^{\infty}(\H^n)|\Hess^b V=Vb\}.
\end{equation}
Any element $V$ in $\mathbb{N}_b$ has a nice property  that the Lorentzian metric $\gamma=-V^2dt^2+b$ is a static solution of the Einstein equation $Ric(\gamma)+n\gamma=0.$
 $\mathbb{N}_b$ is an $(n+1)$-dimensional vector space spanned by   an orthonormal basis of the following functions
$$V_{(0)}=\cosh r,\; V_{(1)}=x^1\sinh r,\;\cdots,\; V_{(n)}=x^n\sinh r,$$
where $r$ is the hyperbolic distance from an arbitrary fixed point on $\H^n$ and  $x^1,x^2,\cdots, x^n$ are the coordinate functions  restricted to $\S^{n-1}\subset \R^n$.
We   equip the vector space $\mathbb{N}_b$  with a Lorentz inner product $\eta$ with signature $(+,-,\cdots,-)$ such that
\begin{equation}\label{eta}
\eta(V_{(0)}, V_{(0)})=1,\qquad \mbox{and}\quad\eta(V_{(i)}, V_{(i)})=-1\quad\mbox{for}\quad i=1,\cdots,n.
\end{equation}
It is clear that the subset $\mathbb{N}_b^+$ of positive functions is just the interior of the future
lightcone. Let $\mathbb{N}_b^1$ be the subset of $\mathbb{N}_b^+$  of functions $V$ with $\eta(V,V)=1$. One can check easily that
every function $V$ in $\mathbb{N}_b^1$ has the following form
\[V=\cosh {\rm dist}_b(x_0, \cdot),\]
for some $x_0\in \H^n$, where ${\rm dist}_b$ is the distance function with respect  to the metric $b$.

Let $(\cM^n,g)$ be an asymptotically hyperbolic manifold of decay order $\tau>\frac n2$ and
\begin{equation}\label{condition}
\int_{
\cM}\cosh r\;|R+n(n-1)| d\mu<\infty,
\end{equation}
where $R$ is the scalar curvature with respect to the metric $g$. Then the mass functional of $(\cM^n,g )$ with respect to $\Phi$  on $\mathbb{N}_b$  is defined by
\begin{equation}\label{AHmass}
H_{\Phi}(V)=\lim_{r\rightarrow\infty}\int_{S_r} \bigg(V(\div^b e-d\tr^b e)+(\tr^b e) dV-e(\nabla^b V,\cdot)\bigg)\nu d\mu,
\end{equation}
where $e:= \Phi_{\ast}g-b$, $S_r$ is a geodesic sphere with radius $r$, $\nu$ is the outer normal of $S_r$
and $d\mu$ is the area  element with respect to  the induced metric on $S_r$.
From\cite{CH}, the  limit in (\ref{AHmass}) exists and is finite, provided  that the decay condition $\tau>\frac n2$ and the integrable condition (\ref{condition}) holds.
  If $A$ is an isometry of the hyperbolic metric $b$ and $\Phi$ a chart as defined in Definition 2.1, then one can easily check
  that   $A\circ\Phi$ is also such a chart and
\begin{equation}\label{relation0}
H_{A\circ\Phi}(V)=H_{\Phi}(V\circ A^{-1}),
\end{equation}
holds.
 Moreover, if $\Psi$ is another chart satisfying conditions in Definition 2.1, there is an isometry $A$ of $b$ such that $\Psi=A\circ \Phi$ modulo lower order terms \cite {CH, Herzlich} such that the mass functional is not changed. It follows that  the defined limit (\ref{AHmass}) is a geometric invariant independent of the choice of coordinates at infinity. See also Theorem $3.3$ in \cite{M}. If the mass functional $H_{\Phi}:\mathbb{N}_b\rightarrow\R$ is timelike future directed, i.e.,
  $H_{\Phi}(V)>0$ for all $V\in \mathbb{N}^+$,
then we can define  the asymptotically hyperbolic mass  $m^{\H}$ for asymptotically hyperbolic manifold of decay order $\tau>\frac n2$ as follows:
 \begin{equation}\label{m_H}
m^{\H}_1:=\frac{1}{2(n-1)\omega_{n-1}}\;\inf_{\mathbb{N}_{b}^1}H_{\Phi}(V).
 \end{equation}

 From above discussion we know that
 \[m_1^{\H}=\frac{1}{2(n-1)\omega_{n-1}}\;\inf_{\Phi} H_{\Phi}(V_{(0)}),
 \]
 where  the infimum takes over of asymptotically hyperbolic charts $\Phi$ as above.
 Hence to estimate $m^{\H}$ we only need to estimate $H_\Phi(V_{(0)})$ for a fixed $V_{(0)}$ and for all $\Phi$ and $ V_{(0)}$
 is a fixed function in $ \mathbb{N}_{b}^1$.
 This strategy will also be used to estimate our new mass defined in Section 3 below.

\subsection{Gauss-Bonnet curvature and the $k$-th mean curvature}
Recall that
\begin{equation}\label{Lk}
L_k:=\frac{1}{2^k}\d^{i_1i_2\cdots i_{2k-1}i_{2k}}
_{j_1j_2\cdots j_{2k-1}j_{2k}}{R_{i_1i_2}}^{j_1j_2}\cdots
{R_{i_{2k-1}i_{2k}}}^{j_{2k-1}j_{2k}},
\end{equation}
is the {\it $k$-th Gauss-Bonnet curvature}, or simply the {\it Gauss-Bonnet curvature}. Here
 the generalized Kronecker delta is defined by
\begin{equation}\label{generaldelta}
 \d^{j_1j_2 \cdots j_r}_{i_1i_2 \cdots i_r}=\det\left(
\begin{array}{cccc}
\d^{j_1}_{i_1} & \d^{j_2}_{i_1} &\cdots &  \d^{j_r}_{i_1}\\
\d^{j_1}_{i_2} & \d^{j_2}_{i_2} &\cdots &  \d^{j_r}_{i_2}\\
\vdots & \vdots & \vdots & \vdots \\
\d^{j_1}_{i_r} & \d^{j_2}_{i_r} &\cdots &  \d^{j_r}_{i_r}
\end{array}
\right).
\end{equation}
 This curvature was first appeared in the paper of Lanczos \cite{Lan} in 1938 for $n=4$ and $k=2$ 
 and
 has been intensively studied in the Gauss-Bonnet gravity,
which is a generalization of the Einstein gravity.  $L_1$ is just the scalar curvature. One could check  that $L_k=0$ if $2k>n$.
When $2k=n$, $L_k$ is in fact the Euler density, which was studied by Chern \cite{Chern1, Chern2} in his proof of the Gauss-Bonnet-Chern theorem. See also a nice survey \cite{Zhang}
on the  proof of the Gauss-Bonnet-Chern theorem.
 For
$k<n/2$, $L_k$ is therefore called the dimensional continued Euler density
in physics.
  One can decompose the Gauss-Bonnet curvature in the following way
\begin{equation}
L_k=P_{(k)}^{stjl}R_{stjl},
\end{equation}
where \begin{equation}\label{Pk}
P_{(k)}^{st lj}:=\frac{1}{2^k}\d^{i_1i_2\cdots i_{2k-3}i_{2k-2}st}
_{j_1j_2\cdots  j_{2k-3}j_{2k-2} j_{2k-1}j_{2k}}{R_{i_1i_2}}^{j_1j_2}\cdots
{R_{i_{2k-3}i_{2k-2}}}^{j_{2k-3}j_{2k-2}}g^{j_{2k-1}l}g^{j_{2k}j}.
\end{equation}
(Throughout this paper, we adopt the Einstein summation convention.)
For example, for the case $k=1,$
\begin{equation}\label{L1}
L_1=R=R_{stjl}P^{stjl}_{(1)},
\end{equation}
where
\begin{equation}\label{P1}
P_{(1)}^{stjl}=\frac 12(g^{sj}g^{tl}-g^{sl}g^{tj}).
\end{equation}
For the case $k=2$, we have
\begin{equation}\label{L2}
L_2=\frac 14 \delta
^{i_1i_2i_3i_4}_{j_1j_2j_3j_4}{R^{j_1j_2}}_{i_1i_2}
{R^{j_3j_4}}_{i_3i_4}=\| Rm\|^2-4\| Ric\|^2+ R^2
\end{equation}
and
\begin{equation}\label{P2}
P_{(2)}^{stjl}= R^{stjl}+ R^{tj}g^{sl}- R^{tl}g^{sj}- R^{sj}g^{tl}+ R^{sl}g^{tj}+\frac12  R(g^{sj}g^{tl}-g^{sl}g^{tj}).
\end{equation}
We collect important properties  of the four-tensor $P_{(k)}$ in the following lemma.
\begin{lemm}\label{PProperty}
\begin{enumerate}[(1)]
\item $P_{(k)}$ shares the same symmetry and antisymmetry with the Riemann curvature tensor that
\begin{equation}\label{symmetry}
P_{(k)}^{stjl}=-P_{(k)}^{tsjl}=-P_{(k)}^{stlj}=P_{(k)}^{jlst}.
\end{equation}
\item $P_{(k)}$  satisfies the first Bianchi identity, i.e., $P^{stjl}+P^{tjsl}+P^{jstl}=0$.
\item  $P_{(k)}$ is divergence-free,
\begin{equation}\label{div-free}
\nabla_s P_{(k)}^{stjl}=0.
\end{equation}
\end{enumerate}
\end{lemm}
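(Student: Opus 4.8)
The plan is to read off all three properties from the combinatorial structure of the generalized Kronecker delta $\d^{j_1\cdots j_r}_{i_1\cdots i_r}$, which is totally antisymmetric in its upper and, separately, in its lower indices, together with the algebraic and differential Bianchi identities of the Riemann tensor; I argue for $P_{(k)}$ as defined in \eqref{Pk}, and the same argument applies verbatim to $\tilde P_{(k)}$, since $\tilde R$ is again a curvature-type tensor obeying both Bianchi identities. For (1), the two antisymmetries are immediate: exchanging $s\leftrightarrow t$ swaps two upper slots of $\d$, and exchanging $l\leftrightarrow j$, after relabelling the dummy pair in $g^{j_{2k-1}l}g^{j_{2k}j}$, swaps the lower slots $j_{2k-1}\leftrightarrow j_{2k}$, so in each case $\d$ contributes a sign $-1$. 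For the pair symmetry $P_{(k)}^{stjl}=P_{(k)}^{jlst}$ I would use that $P_{(k)}$ is, up to a universal constant and raising/lowering of indices, $\partial L_k/\partial\mathrm{Riem}$ — equivalently, that the four ``output'' indices $(s,t,j,l)$ may be placed in any one of the $k$ index-blocks of the $k$-fold-product form of $L_k$, all choices giving the same tensor, because swapping two index-blocks is an even permutation of the upper indices of $\d$ and an even permutation of its lower indices while the $k$ Riemann factors are identical; placing the output block alongside the others makes the symmetry under $(s,t)\leftrightarrow(j,l)$ manifest. (These are classical facts about the Lovelock tensor; cf. \cite{Lovelock}.)

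For (2), using the antisymmetry in the first two indices, the stated identity is equivalent to $P_{(k)}^{[stj]l}=0$. Writing $P_{(k)}^{stjl}=2^{-k}\,\d^{i_1\cdots i_{2k-2}st}_{j_1\cdots j_{2k-2}pq}\,(\prod R)\,g^{pj}g^{ql}$ and antisymmetrizing over $s,t,j$, the Riemann factors and the indices $q,l$ are spectators, and the assertion reduces to the elementary identity that an expression of the shape $\d^{st}_{pq}\,g^{pj}$ — antisymmetric in $(s,t)$, its remaining dependence entering only through the symmetric metric — has vanishing total antisymmetrization over $s,t,j$, which is a one-line check over the six permutations.

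Property (3), the divergence-free identity, is the one that really underlies the definition of the mass, and it has the cleanest proof. Since $\d$ is a polynomial in Kronecker deltas and the metric is parallel, in $\nabla_s P_{(k)}^{stjl}$ the derivative falls only on the product of Riemann tensors; by the Leibniz rule this produces $k-1$ terms which, after relabelling the dummy index-pairs (an even permutation of the indices of $\d$, the remaining Riemann factors commuting), are all equal, so
\[
\nabla_s P_{(k)}^{stjl}=\frac{k-1}{2^k}\,\d^{i_1\cdots i_{2k-2}st}_{j_1\cdots j_{2k}}\,(\nabla_s R_{i_1i_2}{}^{j_1j_2})\,R_{i_3i_4}{}^{j_3j_4}\cdots R_{i_{2k-3}i_{2k-2}}{}^{j_{2k-3}j_{2k-2}}\,g^{j_{2k-1}l}g^{j_{2k}j}.
\]
Here $s,i_1,i_2$ are contracted against three upper slots of the totally antisymmetric $\d$, so $\nabla_s R_{i_1i_2}{}^{j_1j_2}$ may be replaced by $\nabla_{[s}R_{i_1i_2]}{}^{j_1j_2}$, which vanishes by the second Bianchi identity; hence $\nabla_s P_{(k)}^{stjl}=0$.

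The main obstacle is bookkeeping rather than conceptual: the pair symmetry in (1) and the identity in (2) require care with the index positions inside the generalized delta — which slots get permuted, and keeping track of the raised output pair — whereas (3), despite being the property on which the whole construction of the mass depends, drops out of the second Bianchi identity in essentially one line. I would therefore dispose of (1) and (2) briefly (invoking the standard combinatorics of $\d$, cf. \cite{Lovelock}) and present (3) in full.
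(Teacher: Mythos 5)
Your part (3) is correct and is, in substance, the paper's own proof: the paper likewise lets the derivative fall on the $k-1$ Riemann factors, obtaining the prefactor $\frac{k-1}{2^k}$, and then invokes the second Bianchi identity together with the total antisymmetry of the generalized delta; it phrases the final step as the identity $\nabla_s P_{(k)}^{stjl}=-2\nabla_s P_{(k)}^{stjl}$ rather than as replacing $\nabla_s R_{i_1i_2}{}^{j_1j_2}$ by its total antisymmetrization $\nabla_{[s}R_{i_1i_2]}{}^{j_1j_2}$, but these are the same computation. Part (1) is only asserted in the paper (``easy to check''), and your combinatorial argument for it is acceptable.

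There is, however, a genuine gap in your part (2). The paper gives no proof here either (it refers to page 26 of \cite{Willa}), but the reduction you propose would fail. You claim that in $P_{(k)}^{[stj]l}$ ``the Riemann factors are spectators'' and that the identity reduces to the vanishing of the antisymmetrization of $\d^{st}_{pq}g^{pj}$ over $s,t,j$. This is not so: $\d^{i_1\cdots i_{2k-2}st}_{j_1\cdots j_{2k-2}pq}$ is a $2k\times 2k$ determinant, and in its expansion the lower slot $p$ (which is raised to produce the free index $j$) is matched not only with $s$ or $t$ but also with the contracted indices $i_1,\dots,i_{2k-2}$ carried by the Riemann factors; in those cross terms $s$ and $t$ are matched with the slots $j_1,\dots,j_{2k-2}$, i.e. they land on the Riemann factors themselves. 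Those contributions do not cancel for combinatorial reasons alone — they cancel only via the first Bianchi identity of the Riemann tensor. The case $k=2$ makes this concrete: by \eqref{P2}, $P_{(2)}^{stjl}$ contains the summand $R^{stjl}$, whose cyclic sum $R^{stjl}+R^{tjsl}+R^{jstl}$ is precisely the Riemann first Bianchi combination and vanishes for no other reason, while the Ricci and scalar summands do cancel by the elementary metric-symmetry argument you describe. So your ``one-line check over the six permutations'' disposes only of the terms in which $p$ is matched with $s$ or $t$; a correct proof of (2) must additionally invoke the first Bianchi identity of the curvature factors (this is also why the paper is careful to record that the modified tensor $\widetilde{\rm Riem}$ still satisfies the first Bianchi identity before transferring the lemma to $\tilde P_{(k)}$).
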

\begin{proof} (1) is easy to check.  For (2), one can also  calculate it directly, or see page 26 in \cite{Willa}. Here we provide a proof for (3), since
this property is crucial.
 To show (\ref{div-free}), we use the differential Bianchi identity
$$\nabla_s {R_{i_1 i_2}}^{j_1 j_2}=-\nabla_{i_1}{R_{i_2 s}}^{j_1 j_2}-\nabla_{i_2}{R_{s i_1}}^{j_1 j_2},$$
and the fact of the symmetry property of the generalized Kronecker delta and the curvature tensor. From (\ref{Pk}), we compute
\begin{eqnarray*}
&&\nabla_s P_{(k)}^{st jl}\\
&=&\frac{k-1}{2^k}\d^{i_1i_2\cdots i_{2k-3}i_{2k-2}st}
_{j_1j_2\cdots  j_{2k-3}j_{2k-2} j_{2k-1}j_{2k}}(\nabla_s {R_{i_1i_2}}^{j_1j_2})\cdots
{R_{i_{2k-3}i_{2k-2}}}^{j_{2k-3}j_{2k-2}}g^{j_{2k-1}j}g^{j_{2k}l}\\
&=&\frac{k-1}{2^k}\d^{i_1i_2\cdots i_{2k-3}i_{2k-2}st}
_{j_1j_2\cdots  j_{2k-3}j_{2k-2} j_{2k-1}j_{2k}}\big(-\nabla_{i_1}{R_{i_2 s}}^{j_1 j_2}-\nabla_{i_2}{R_{s i_1}}^{j_1 j_2}\big)\cdots
{R_{i_{2k-3}i_{2k-2}}}^{j_{2k-3}j_{2k-2}}g^{j_{2k-1}j}g^{j_{2k}l}\\
&=&\frac{k-1}{2^k}\d^{i_1i_2\cdots i_{2k-3}i_{2k-2}st}
_{j_1j_2\cdots  j_{2k-3}j_{2k-2} j_{2k-1}j_{2k}}\big(-2\nabla_{i_1}{R_{i_2 s}}^{j_1 j_2}\big)\cdots
{R_{i_{2k-3}i_{2k-2}}}^{j_{2k-3}j_{2k-2}}g^{j_{2k-1}j}g^{j_{2k}l}\\
&=&\frac{k-1}{2^k}\d^{i_1i_2\cdots i_{2k-3}i_{2k-2}st}
_{j_1j_2\cdots  j_{2k-3}j_{2k-2} j_{2k-1}j_{2k}}\big(-2\nabla_{s}{R_{i_1 i_2}}^{j_1 j_2}\big)\cdots
{R_{i_{2k-3}i_{2k-2}}}^{j_{2k-3}j_{2k-2}}g^{j_{2k-1}j}g^{j_{2k}l}\\
&=&-2\nabla_s P_{(k)}^{st jl},
\end{eqnarray*}
which implies
$$\nabla_s P_{(k)}^{st jl}=0.$$ Thus we complete the proof.
\end{proof}
This Lemma, in particular, the divergence-free property \eqref{div-free}, plays a crucial role in this paper.

\vspace{3mm}
In the following, we recall several basic properties of the $k$-th mean curvature.
Let $\s_k$ be the $k$-th elementary symmetry function $\s_k:\R^{n-1}\to \R$ defined by
\[\s_k(\Lambda)=\sum_{i_1<\cdots<i_{k}}\lambda_{i_1}\cdots\lambda_{i_k}\quad  \hbox{ for } \Lambda=(\l_1, \cdots,\l_{n-1})\in \R^{n-1}.\]
This definition can be easily extended to symmetric matrices.
 For a symmetric matrix $B$, denote $\lambda(B)=(\lambda_1(B),\cdots,\lambda_{n-1}(B))$ be the eigenvalues of $B$. We set
\[
\s_k(B):=\s_k(\lambda(B)).
\]
Here we make a convention that $\s_0(B)=1$. Let $I$ be the identity matrix. Then we have for any $t\in\R$,
\[
\s_{n-1}(I+tB)=\det(I+tB)=\sum_{i=0}^{n-1}\s_i(B)t^i.
\]
 The Garding cone $\Gamma_k^+$ is defined as
\[\Gamma_k^+=\{\Lambda \in \R^{n-1} \, |\,\s_j(\Lambda)>0, \quad\forall j\le k\}.\]
A symmetric matrix $B$ is called belong to $\Gamma_k^+$  if $\lambda(B)\in \Gamma_k^+$.
We collect the basic facts about $\s_k$, which will be directly used in this paper. For further details, we refer to a survey of Guan \cite{Guan}.

The $k$-th Newton transformation is defined as follows
\begin{equation}\label{Newtondef}
(T_k)^{i}_{j}(B):=\frac{\partial \s_{k+1}}{\partial B^{j}_{i}}(B),
\end{equation}
where $B=(B^{i}_{j})$. If there is no confusion, we may omit the index $k$. We recall basic formulas about $\s_k$ and $T$.\\
\begin{eqnarray}\label{sigmak}
\s_k(B)& =&\ds\frac{1}{k!}\d^{i_1\cdots i_k}
_{j_1\cdots j_k}B_{i_1}^{j_1}\cdots
{B_{i_k}^ {j_k}}=\frac{1}{k} \tr(T_{k-1}(B)B),
\\
(T_k)^{i}_j(B) & =& \ds \frac{1}{k!}\d^{ii_1\cdots i_{k}}
_{j j_1\cdots j_{k}}B_{i_1}^{j_1}\cdots
{B_{i_{k}}^{j_{k}}}\label{Tk}.
\end{eqnarray}

Let $(\Sigma,\gamma)$ be a hypersurface in $\H^n$ with $\gamma$ the induces metric and $B$ its second fundamental form. The $k$-th mean curvature of $\Sigma$ is defined
\begin{equation}\label{add_sigmak}
\s_k=\s_k(B).
\end{equation}
When $k=1$, $\s_1$ is just the ordinary mean curvature.

\section{A new mass for asymptotically hyperbolic manifolds}
In this section, we will introduce a higher order mass for asymptotically hyperbolic manifolds which extends the mass defined in Subsection 2.1.
Inspired by our recent work about a new mass on asymptotically flat  manifolds \cite{GWW}, which was  in turn motivated by the Gauss-Bonnet gravity \cite{DT1, DT2}, we define it by using the Gauss-Bonnet curvature $L_k$. As indicated in  Introduction, we need to modify the
 Gauss-Bonnet curvature $L_k$ a little bit in order to obtain a new one.
More precisely, on a Riemannian manifold $(\cM^n,g)$, we define a new four-tensor
\begin{equation}\label{tildeRm}
\widetilde  {\rm Riem}_{ijsl}(g)=\tilde  R_{ijsl}(g):=R_{ijsl}(g)+g_{is}g_{jl}-g_{il}g_{js}.
\end{equation}
Let us apply the same argument in Subsection 2.2 to define a new Gauss-Bonnet curvature with respect to this tensor $\widetilde {Riem}$
\begin{equation}\label{tildeLk}
\tilde L_k:=\frac{1}{2^k}\d^{i_1i_2\cdots i_{2k-1}i_{2k}}
_{j_1j_2\cdots j_{2k-1}j_{2k}} {\tilde R_{i_1i_2}}^{\quad\, j_1 j_2}\cdots
{\tilde R_{i_{2k-1}i_{2k}}}^{\qquad\;\,j_{2k-1}j_{2k}}=\tilde R_{stjl}{\tilde P_{(k)}}^{stjl},
\end{equation}
where
\begin{equation}\label{tildePk}
{{\tilde P}_{(k)}}^{st jl}:=\frac{1}{2^k}\d^{i_1i_2\cdots i_{2k-3}i_{2k-2}st}
_{j_1j_2\cdots  j_{2k-3}j_{2k-2} j_{2k-1}j_{2k}}{\tilde R_{i_1i_2}}^{\quad\, j_1 j_2}\cdots
{\tilde R_{i_{2k-3}i_{2k-2}}}^{\qquad\quad j_{2k-3}j_{2k-2}}g^{j_{2k-1}j}g^{j_{2k}l}.
\end{equation}
It is easy to see that
\[\tilde L_1=L_1+n(n-1)=R+n(n-1)\]
and
\[ \tilde L_2= L_2+2(n-2)(n-3)R+n(n-1)(n-2)(n-3).\]
Moreover, we have
\begin{equation}\label{tildeP1}
\tilde P_{(1)}^{stjl}= P_{(1)}^{stjl}=\frac{1}{2}(g^{sj}g^{tl}-g^{sl}g^{tj}),
\end{equation}
and
\begin{equation}\label{tildeP2}
\tilde P_{(2)}^{stjl}=\tilde R^{stjl}+\tilde R^{tj}g^{sl}-\tilde R^{sj}g^{tl}-\tilde R^{tl}g^{sj}+\tilde R^{sl}g^{tj}+\frac12 \tilde R(g^{sj}g^{tl}-g^{sl}g^{tj}).
\end{equation}
where
\[\tilde R^{sj} =g_{tl}  \tilde R^{stjl}, \quad \tilde R=g_{sj} \tilde R^{sj}. \]
By a direct calculation, one gets that
\begin{equation}\label{relation}
\tilde P_{(2)}^{stjl}=P_{(2)}^{stjl}+(n-2)(n-3)P_{(1)}^{stjl},
\end{equation}
where $P_2$ and $P_1$ are defined in (\ref{P2}) and (\ref{P1}) respectively. Remark that  $\tilde P_{(k)}$ is
a ``good" linear combination of $P_{(i)}, \; i=1,\cdots,k$ with constant coefficients only depending on $n$ and $k$.
It is clear that the tensor $\tilde P_{(k)}$ satisfies the same  properties as  $P_{(k)}$, 
namely,
\begin{equation}\label{antisymmetry}
\tilde P^{stjl}_{(k)}=-\tilde P^{tsjl}_{(k)}=-\tilde P^{stlj}_{(k)}=\tilde P^{jlst}_{(k)},
\end{equation}
\begin{equation}\label{Bianchi}
\tilde P^{stjl}_{(k)}+\tilde P^{tjsl}_{(k)}+\tilde P^{jstl}_{(k)}=0
\end{equation}
and
\begin{equation}\label{divergence-free}
\nabla_s\tilde P^{stjl}_{(k)}=0.
\end{equation}
In the following, if there is no confusion, we just write $\tilde P=\tilde P_{(k)}$ for brief
and denote the tensors associated with the hyperbolic metric $b$ with a bar.
In view of (\ref{antisymmetry}) and (\ref{divergence-free}), we observe that for asymptotically hyperbolic manifolds of decay order $\tau$,
 the Gauss-Bonnet curvature multiplying with  a function $V\in\{\mathbb{N}_b\}$, i.e., $V\tilde L_k$,
   can be expressed as a divergence term together with a term of  faster decay.

First, since the difference of two Christoffel symbols is a tensor, we have the following formula
\begin{eqnarray}\label{Gammadiff}
\Gamma_{ij}^s-\bar{\Gamma}_{ij}^s&=&\frac 12 g^{sl}(\bar{\nabla}_i g_{lj}+\bar{\nabla}_j g_{li}-\bar{\nabla}_l g_{ij})\nonumber\\
&=&\frac 12 g^{sl}(\bar{\nabla}_i e_{lj}+\bar{\nabla}_j e_{li}-\bar{\nabla}_l e_{ij}),
\end{eqnarray}
where $e_{ij}:=g_{ij}-b_{ij}$ in local coordinates.
And the difference of two curvature tensors in local coordinates is
\begin{equation}\label{Rmdiff}
\begin{array}{rcl} \ds\vs  R_{ij\;\; l}^{\;\;\,\, t}-\bar R_{ij\;\;l}^{\;\;\,\,t}
&=& \ds {\nabla}_i(\Gamma_{jl}^t-\bar{\Gamma}_{jl}^t)-{\nabla}_j(\Gamma_{il}^t-\bar{\Gamma}_{il}^t) \\
&&\ds
 +(\Gamma_{is}^m-\bar{\Gamma}_{is}^m)
(\Gamma_{jl}^s-\bar{\Gamma}_{jl}^s)
-(\Gamma_{js}^m-\bar{\Gamma}_{js}^m)(\Gamma_{il}^s-\bar{\Gamma}_{il}^s).\end{array}
\end{equation}
We emphasize again that $\bar \nabla$ is the covariant derivative w.r.t to $b$ and $\nabla$ the covariant derivative w.r.t to $g$.
By the divergence-free property of $\tilde P$ and the fact that the
quadratic terms of Christoffel symbols have a faster decay, we have from (\ref{antisymmetry}), (\ref{divergence-free}) and  (\ref{Gammadiff}) that
\begin{eqnarray}\label{decayestimate}
V\tilde L_k&=&V\tilde P^{ijsl}\tilde R_{ijsl}=V\tilde P^{ijsl}\bigg(R_{ijsl}+g_{is}g_{jl}-g_{il}g_{js}\bigg)\nonumber\\
&=&V\tilde P^{ijsl}\bigg(g_{st}( R_{ij\;\; l}^{\;\;\,\, t}-\bar R_{ij\;\;l}^{\;\;\,\,t})+ g_{is}g_{jl}-g_{il}g_{js}+ g_{st}\bar R_{ij\;\;\,l}^{\;\;\,\,t}\bigg)\nonumber\\
&=&V\tilde P^{ijsl}\bigg(g_{st}( R_{ij\;\; l}^{\;\;\,\, t}-\bar R_{ij\;\;l}^{\;\;\,\,t})+ g_{is}(g_{jl}-b_{jl})-g_{js}(g_{il}-b_{il})\bigg)\nonumber\\
&=&2V\tilde P^{ijsl}\bigg(g_{st}{\nabla}_i(\Gamma_{jl}^t-\bar{\Gamma}_{jl}^t)+g_{is}e_{jl}\bigg)+O(e^{(-(k+1)\tau+1)r})\nonumber\\
&=&V\nabla_i\bigg((\bar{\nabla}_l e_{js}+\bar{\nabla}_j e_{sl}-\bar{\nabla}_s e_{jl})\tilde P^{ijsl}\bigg)+2V\tilde P^{ijsl}b_{is}e_{jl}+O(e^{(-(k+1)\tau+1)r})\nonumber\\
&=&2V\bar{\nabla}_i(\bar{\nabla}_l e_{js}\tilde P^{ijsl})-2V\tilde P^{ijsl}b_{il}e_{js}+O(e^{(-(k+1)\tau+1)r})\nonumber\\
&=&2\bar{\nabla}_i(V(\bar{\nabla}_l e_{js})\tilde P^{ijsl})-2\bar{\nabla}_i V (\bar{\nabla}_l e_{js})\tilde P^{ijsl}-2V\tilde P^{ijsl}b_{il}e_{js}+O(e^{(-(k+1)\tau+1)r})\nonumber\\
&=&2\bar{\nabla}_i(V(\bar{\nabla}_l e_{js})\tilde P^{ijsl})-2\bar{\nabla}_l V (\bar{\nabla}_i e_{js})\tilde P^{ijsl}-2V\tilde P^{ijsl}b_{il}e_{js}+O(e^{(-(k+1)\tau+1)r})\nonumber\\
&=&2\bar{\nabla}_i\bigg((V\bar{\nabla}_l e_{js}-e_{js}\bar{\nabla}_l V)\tilde P^{ijsl}\bigg)+2(\bar{\nabla}_i\bar{\nabla}_l V-V b_{il})e_{js}\tilde P^{ijsl}+O(e^{(-(k+1)\tau+1)r}).
\end{eqnarray}
Here we have used the fact for any tensor field $T$
\begin{equation}\label{fact_decay}
\bar\nabla T-\nabla T=O(e^{-\tau r}|T|),
\end{equation}
which follows from (\ref{Gammadiff}).
Since $V\in\mathbb{N}_b$, which is defined in \eqref{Nb},  the second term in (\ref{decayestimate}) vanishes, and hence we conclude

\beq\label{add1}
V\tilde L_k=2\bar\nabla_i\bigg((V\bar{\nabla}_l e_{js}-\bar\nabla_l V e_{js})\tilde P^{ijsl}\bigg)+O(e^{(-(k+1)\tau+1)r}).\eeq
With this crucial expression, one can check that the limit
\begin{equation}\label{HV}
H_{k}^\Phi(V)=\lim_{r\rightarrow\infty}\int_{S_r}\bigg(\big(V\bar\nabla_l e_{js}-e_{js} \bar\nabla_l V\big) \tilde P^{ijsl}_{(k)}\bigg)\nu_id\mu,
\end{equation}
exists and is finite provided that $V\tilde L_k$ is integrable and the decay order $\tau>\frac{n}{k+1}$. Here $\Phi$ is any diffeomorphism used in Definition \ref{AH}. We remark that by (\ref{tildeP1}), one can check that $2H_1^\Phi(V)$ coincides with (\ref{AHmass}) defined by Chru\'{s}ciel and Herzlich \cite{CH}.
  Now we have
\begin{theo}\label{thm1}
Suppose $(\cM^n,g)(2k <n )$ is an asymptotically hyperbolic manifold of decay order $\tau>\frac{n}{k+1}$ and for $V\in\mathbb N_b$ defined in (\ref{Nb}),
$V \tilde L_k$ is integrable in $(\cM^n,g)$, then the mass functional $H^\Phi_k(V)$ defined  in (\ref{HV}) is well-defined.
\end{theo}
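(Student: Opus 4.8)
The plan is to integrate the pointwise identity \eqref{add1} established above,
\[
V\tilde L_k = 2\,\bar\nabla_i\big((V\bar\nabla_l e_{js} - e_{js}\bar\nabla_l V)\,\tilde P^{ijsl}_{(k)}\big) + O\!\left(e^{(-(k+1)\tau+1)r}\right),
\]
over the region $A_{r_1,r_2}$ bounded by two large geodesic spheres $S_{r_1}$ and $S_{r_2}$, and to apply the divergence theorem. Writing $X^i := (V\bar\nabla_l e_{js} - e_{js}\bar\nabla_l V)\,\tilde P^{ijsl}_{(k)}$ for the vector field obtained by contracting the bracket in \eqref{HV}, and $F(r) := \int_{S_r} X^i \nu_i\, d\mu$ for the corresponding flux, the divergence theorem for the metric $b$ gives
\[
F(r_2) - F(r_1) = \tfrac12 \int_{A_{r_1,r_2}} V\tilde L_k \, dV_b - \tfrac12 \int_{A_{r_1,r_2}} O\!\left(e^{(-(k+1)\tau+1)r}\right) dV_b .
\]
Since $F(r)$ is finite for each fixed $r$ (the sphere $S_r$ is compact and the integrand smooth), it suffices to show that each term on the right tends to $0$ as $r_1 \to \infty$, uniformly in $r_2 > r_1$; then $F$ satisfies the Cauchy criterion and $H_k^\Phi(V) = \lim_{r\to\infty} F(r)$ exists and is finite, which is exactly the assertion of the theorem.

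For the first term I would use that, by hypothesis, $V\tilde L_k$ is integrable on $(\cM^n,g)$; since $g$ and $b$ are uniformly equivalent on $\cM \setminus K$, the same integrability holds with respect to $dV_b$, so $\int_{A_{r_1,r_2}} |V\tilde L_k|\, dV_b \le \int_{\{r\ge r_1\}} |V\tilde L_k|\, dV_b \to 0$. For the error term, the point is that in geodesic polar coordinates the hyperbolic volume element satisfies $dV_b = \sinh^{n-1}r\, dr\, d\Theta = O(e^{(n-1)r})\, dr\, d\Theta$, so
\[
\int_{A_{r_1,r_2}} O\!\left(e^{(-(k+1)\tau+1)r}\right) dV_b = \int_{r_1}^{r_2} O\!\left(e^{(n-(k+1)\tau)r}\right) dr ,
\]
and this is finite and tends to $0$ as $r_1 \to \infty$ precisely because the decay hypothesis $\tau > \frac{n}{k+1}$ makes the exponent $n-(k+1)\tau$ negative. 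This is exactly where the threshold $\tau > \frac{n}{k+1}$ enters (for $k\ge 2$ it permits slower decay than the $\tau > \frac n2$ used in the $k=1$ theory).

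The hard part is not this reduction but the identity \eqref{add1} itself, carried out in \eqref{Gammadiff}--\eqref{decayestimate}, where two points need care. First, the quadratic-in-Christoffel terms in \eqref{Rmdiff} and the commutation of $\nabla$ with $\bar\nabla$ on tensors via \eqref{fact_decay} contribute only at order $O(e^{(-(k+1)\tau+1)r})$, using that each factor $\tilde P_{(k)}$ carries $k-1$ copies of the decaying curvature $\tilde R = O(e^{-\tau r})$. Second, the only term of the borderline order $O(e^{(1-k\tau)r})$ that is not manifestly a $b$-divergence, namely $2(\bar\nabla_i\bar\nabla_l V - V b_{il})\,e_{js}\,\tilde P^{ijsl}_{(k)}$, vanishes identically because $V \in \mathbb N_b$ satisfies $\Hess^b V = Vb$; this is the structural reason for restricting the test functions to $\mathbb N_b$. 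I would also record that the divergence theorem is applied with respect to $b$, so that $\nu$, $d\mu$ and $dV_b$ are the hyperbolic ones in agreement with \eqref{HV}, the independence of the limit from the choice of chart $\Phi$ being a separate issue.
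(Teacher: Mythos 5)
Your proof is correct and follows exactly the route the paper intends: the paper's own proof of this theorem is the one-line remark that it ``follows from \eqref{add1} easily,'' and your annulus/divergence-theorem argument, with the Cauchy criterion driven by the integrability of $V\tilde L_k$ and by the exponent $n-(k+1)\tau<0$ coming from $\tau>\frac{n}{k+1}$, is precisely the verification being left to the reader. You also correctly identify the two delicate points already carried out in the paper's derivation of \eqref{add1}, namely the faster decay of the quadratic Christoffel terms and the cancellation of the borderline term via $\Hess^b V=Vb$.
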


\begin{proof} This follows from \eqref{add1} easily.
\end{proof}

The above definition of an asymptotically hyperbolic mass functional involves the choice of  coordinates at infinity. We need to show
that it is a geometric invariant which does not depend on the choice of coordinates  at infinity.

\begin{theo}
Assume $(\cM^n,g)$ is asymptotically hyperbolic manifold of decay order
\begin{equation}\label{decaytau_order}
\tau>\frac{n}{k+1},
\end{equation}
 and satisfies the integrable condition
\be\label{add2}
\int_\cM V|\tilde L_k|<\infty,
\ee
 then $ H_k^{\Phi}(V)$ does not depend on the choice of coordinates at infinity in the sense that if there are two maps $\Phi_1,\Phi_2$ satisfying (\ref{decaytau_order}), (\ref{add2}), then there exists an isometry $A$ of $b$, such that
 $$H^{\Phi_2}_k(V)=H^{\Phi_1}_k(V\circ A^{-1}).$$
\end{theo}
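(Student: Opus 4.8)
The plan is to adapt the strategy used for the $k=1$ Chru\'sciel--Herzlich mass, following \cite{CH} and especially the general framework of Michel \cite{M} (with \cite{Bar} as the asymptotically flat model). Given two charts $\Phi_1,\Phi_2$, put $\Psi:=\Phi_2\circ\Phi_1^{-1}$, $g_i':=(\Phi_i^{-1})^\ast g$, $e_i:=g_i'-b$, so that $g_1'=\Psi^\ast g_2'$ and hence $\Psi^\ast b-b=e_1-\Psi^\ast e_2$. A bootstrap starting from the uniform equivalence of the $g_i'$ with $b$ first shows that $\Psi$ is uniformly equivalent to the identity near infinity; feeding the decay \eqref{decaytau2} back in then yields $\Psi^\ast b-b=O(e^{-\tau r})$ together with the corresponding one- and two-derivative bounds. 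The now-classical analysis of almost-isometries of $(\H^n\setminus B,b)$ near infinity (expanding in eigenfunctions of the round Laplacian on $\S^{n-1}$, as in \cite{CH,M}) produces an isometry $A$ of $b$ with $\Psi=A+(\text{lower-order terms})$. The proof then splits into: (i) covariance of $H_k^\Phi$ under $A$, and (ii) invariance of $H_k^\Phi$ under a diffeomorphism asymptotic to the identity; granting (i), one replaces $\Phi_1$ by $A\circ\Phi_1$ and is reduced to the situation of (ii).

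For (i): if $A$ is an isometry of $b$, then $((A\circ\Phi)^{-1})^\ast g=(A^{-1})^\ast g'$ with $g'=(\Phi^{-1})^\ast g$, so the associated perturbation is $(A^{-1})^\ast e$. Since $\tilde P_{(k)}$, $\bar\nabla$ and $d\mu$ are natural under isometries and $V\circ A\in\mathbb N_b$ whenever $V\in\mathbb N_b$, the change of variables $A$ identifies the integrand of \eqref{HV} for the pair $(A\circ\Phi,V)$ over geodesic spheres about a point with that for the pair $(\Phi,V\circ A)$ over geodesic spheres about the image point. By \eqref{add1} and the integrability of $V\tilde L_k$, the limit in \eqref{HV} does not change when the exhausting geodesic spheres are re-centred (the region between two such families escapes to infinity, where $V\tilde L_k$ is $L^1$-small), so $H_k^{A\circ\Phi}(V)=H_k^{\Phi}(V\circ A^{-1})$, exactly as in \eqref{relation0}.

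For (ii): write $\Psi^{-1}=\mathrm{id}+\zeta$ with $\zeta$ small, so that expanding pullbacks gives $e_2=e_1+\mathcal L_\zeta b+Q$ with $Q$ at least quadratic in $(\zeta,e_1)$ and their derivatives. Then $H_k^{\Phi_2}(V)-H_k^{\Phi_1}(V)$ equals the limit over $S_r$ of the boundary integrand evaluated on $\mathcal L_\zeta b$, the $Q$-contribution and the terms arising from replacing $\tilde P(g)$ by its $b$-value being of lower order --- this is precisely where the decay $\tau>\tfrac{n}{k+1}$ is used. Linearising the computation \eqref{decayestimate} at $g=b$ shows that the leading boundary vector $\mathbb U^i(V,h):=(V\bar\nabla_l h_{js}-h_{js}\bar\nabla_l V)\,\bar{\tilde P}^{ijsl}$ satisfies, for any symmetric $2$-tensor $h$,
\[
2\bar\nabla_i\mathbb U^i(V,h)=V\,(D\tilde L_k)_b(h)-2\big(\bar\nabla_i\bar\nabla_l V-Vb_{il}\big)\,h_{js}\,\bar{\tilde P}^{ijsl}.
\]
For $h=\mathcal L_\zeta b$ the first term on the right vanishes because $\tilde L_k(\phi_s^\ast b)=\phi_s^\ast\tilde L_k(b)=0$ for the flow $\phi_s$ of $\zeta$ (diffeomorphism invariance of $\tilde L_k$ together with $\tilde L_k(b)=0$), and the second vanishes because $\bar\nabla_i\bar\nabla_l V=Vb_{il}$ for $V\in\mathbb N_b$ --- the very cancellation that deleted the penultimate term in \eqref{decayestimate}. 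Hence $\mathbb U(V,\mathcal L_\zeta b)$ is $\bar\nabla$-divergence-free; moreover, unwinding its definition in terms of the generalised Kronecker delta shows it has the form $\bar\nabla_j\mathbb W^{ij}$ with $\mathbb W$ antisymmetric (it is the Noether superpotential attached to the gauge parameter $\zeta$), and since $\mathbb W^{ij}\nu_i\nu_j=0$ the flux $\int_{S_r}\bar\nabla_j\mathbb W^{ij}\nu_i\,d\mu$ reduces to a tangential divergence over the closed manifold $S_r$ and therefore vanishes. Thus $H_k^{\Phi_2}(V)=H_k^{\Phi_1}(V)$ in case (ii); in general, since $\Phi_2\circ(A\circ\Phi_1)^{-1}$ is asymptotic to the identity, case (ii) gives $H_k^{\Phi_2}(V)=H_k^{A\circ\Phi_1}(V)$, which by (i) equals $H_k^{\Phi_1}(V\circ A^{-1})$, as asserted.

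The main obstacle is the interplay of the asymptotic-isometry step with the decay bookkeeping in (ii): the hypothesis $\tau>\tfrac{n}{k+1}$ is weaker than the $\tau>\tfrac n2$ that naive pointwise estimates would suggest for $k\ge2$, so one must exploit the metric contractions in $\bar{\tilde P}$ (which decay in the non-radial directions) to see that the $Q$-terms and the $\tilde P(g)-\bar{\tilde P}$ corrections genuinely drop out of the limit --- this is exactly where Michel's refined estimates are needed, and it is here that the argument is most delicate. By comparison, the algebraic identity $\mathbb U(V,\mathcal L_\zeta b)=\bar\nabla_j\mathbb W^{ij}$ with $\mathbb W$ antisymmetric, while conceptually forced by the diffeomorphism invariance of $\tilde L_k$, only requires a short explicit check with the generalised deltas.
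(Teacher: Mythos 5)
Your overall skeleton matches the paper's: reduce to an isometry of $b$ composed with a diffeomorphism asymptotic to the identity, dispose of the isometry by naturality, insert $e_2-e_1=\mathcal L_\zeta b+(\text{lower order})$, discard the $\tilde P(g_2)-\tilde P(g_1)$ correction (this is exactly the paper's estimate \eqref{ast}, proved in Appendix B), and show that what remains is the flux of the divergence of an antisymmetric $2$-tensor, hence vanishes over the closed spheres $S_r$. The gap is in the mechanism you propose for that last, crucial step. You justify the divergence structure of $\mathbb U^i(V,\mathcal L_\zeta b)$ by linearising $\tilde L_k$ at $g=b$ and invoking $\tilde L_k(\phi_s^{\ast}b)=0$. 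But in the range $2\le k<\frac n2$ treated here one has $\tilde R(b)=0$, hence $\tilde P_{(k)}(b)=0$ (it contains $k-1$ factors of $\tilde R$, see \eqref{tildePk}) and $(D\tilde L_k)_b=0$: your displayed identity reads $0=0$ and proves nothing. The term that actually has to be controlled is $\big(V\bar\nabla_l(\mathcal L_\zeta b)_{js}-(\mathcal L_\zeta b)_{js}\bar\nabla_lV\big)\tilde P^{ijsl}(g_1)$ with $\tilde P$ evaluated at $g_1$, not at $b$; it is of size $o(e^{(1-\frac{kn}{k+1})r})$, which does not beat the area growth $e^{(n-1)r}$ of $S_r$, so it cannot be dismissed as lower order, and its cancellation cannot be deduced from any statement about the background metric alone.

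The paper handles precisely this term by a direct computation, \eqref{eq1_sec.3} through \eqref{2form}: the Ricci identity for $\bar\nabla$ (using the curvature of $b$), the exact divergence-freeness \eqref{divergence-free} and first Bianchi identity \eqref{Bianchi} of $\tilde P(g_1)$, and the static equation $\Hess^bV=Vb$ combine to give $\delta\mathbb U^i=-\bar\nabla_l\alpha^{il}+o(e^{-(n-1)r})$ with $\alpha^{il}=V\bar\nabla_j(\zeta_s\tilde P^{jsil})+2\zeta_s\tilde P^{ilsj}\bar\nabla_jV$ antisymmetric. This is the heart of the proof, not a ``short explicit check with the generalised deltas'', and it is not a consequence of the divergence-freeness of $\mathbb U(V,\mathcal L_\zeta b)$ (which, as you rightly note, would not suffice anyway). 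To repair your argument you would need to carry out this computation at $g_1$. The rest of your outline --- the isometry covariance, the role of \eqref{ast} in absorbing the $\tilde P(g_2)-\tilde P(g_1)$ correction, and the observation that the flux of $\bar\nabla_j\mathbb W^{ij}$ with $\mathbb W$ antisymmetric vanishes on closed hypersurfaces --- is sound and coincides with what the paper does.
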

\begin{proof}
The argument follows closely  from the one given by Chru\'{s}ciel and Herzlich  in the proof of hyperbolic mass \cite{CH}. See also \cite{CN, Herzlich, M}. The key point is to realize that when changing the  coordinates at infinity,  extra terms which do not decay fast enough to have vanishing integral can be collected in a divergence of some alternative 2-vector field.

First assume $\Phi_1$ and $\Phi_2$ are two  coordinates at infinity satisfying (\ref{decaytau}) with $\tau>\frac{n}{k+1}$, then there exists an isometry $A$ of the background metric $b$ such that
$$\Phi_2-\Phi_1\circ A=o(e^{-\frac{n}{k+1}r}).$$
Hence it suffices to prove
\begin{equation}\label{aim}
H^{\Phi_1\circ A}_k(V)=H^{\Phi_2}_k(V).
\end{equation}
As in \cite{CH} (see (2.17) there), we know that there is a well-defined vector field \begin{equation}\label{decayzeta}
\zeta=o(e^{-\frac{n}{k+1}r}),
\end{equation}
 such that
\begin{equation}\label{linearize}
({\Phi_2}^{-1})^{\ast}g=({\Phi_1}^{-1})^{\ast}g+\mathscr{L}_{\zeta} ({\Phi_1}^{-1})^{\ast}g+o(e^{-\frac{2n}{k+1}r})=({\Phi_1}^{-1})^{\ast}g+\mathscr{L}_{\zeta}b +o(e^{-\frac{2n}{k+1}r}),
\end{equation}
where $\mathscr{L}$ is the Lie derivative.

For $p=1,2$, set
$$\mathbb{U}_p^i=(V\bar \nabla_l (e_p)_{js}-(e_p)_{js}\bar \nabla_l V)\tilde P^{ijsl}(g_p),$$
where $g_p=({\Phi_p}^{-1})^{\ast}g,\;\; \mbox{and}\;\;e_p=g_p-b.$
It follows from (\ref{linearize}) that
$$\mathbb{U}_2^i-\mathbb{U}_1^i=\delta\mathbb{U}^i+o(e^{-(n-1)r}),$$
where $\delta\mathbb{U}^i$ is a first-order term  by linearizing $\mathbb{U}$  at $g_1$ and the remaining terms decay sufficiently fast so that they do not contribute when integrated at infinity. It remains to compute $\delta\mathbb{U}^i$.
It is crucial to realize from (\ref{linearize}) that
\begin{equation}\label{ast}
\tilde P^{ijsl}(g_2)-\tilde P^{ijsl}(g_1)=o(e^{-\frac{k n}{k+1}r}).
\end{equation}
When $k=1$ it is easy to prove, since $\widetilde P_1^{ijsl}(g)=\frac 12 (g^{is}g^{jl}-g^{il}g^{js})$. For the general $k$,
the proof of (\ref{ast}) is not obvious.
 We provide  a detailed computation  in Appendix B for the convenience of the reader. In the following computation, we write $\tilde P=\tilde P(g_1)$ for short.
Hence we have
\begin{eqnarray}\label{eq1_sec.3}
\delta\mathbb{U}^i&=&\bigg(V\bar\nabla_l(\bar\nabla _j\zeta_s+\bar\nabla_s\zeta_j)-(\bar\nabla _j\zeta_s+\bar\nabla_s\zeta_j)\bar \nabla_l V\bigg)\tilde P^{ijsl}+o(e^{-(n-1)r})\nonumber\\
&=&\bigg(V\bar\nabla_l\bar\nabla _j\zeta_s+V\bar\nabla_l\bar\nabla_s\zeta_j-\bar\nabla _j\zeta_s\bar\nabla_l V+\bar\nabla_l\zeta_j\bar \nabla_s V\bigg)\tilde P^{ijsl}+o(e^{-(n-1)r}).
\end{eqnarray}
Here in the second equality we have used the antisymmetry (\ref{antisymmetry}) to exchange the indices $s$ and $l$ in the fourth term.
 We apply the Ricci identity to deal with the second term in (\ref{eq1_sec.3}) as follows:
\begin{eqnarray}\label{eq2_sec.3}
(\bar\nabla_l\bar\nabla_s\zeta_j-\bar\nabla_s\bar\nabla_l\zeta_j)\tilde P^{ijsl}&=&({\bar R_{lsj}}^{\quad t}\zeta_t)\tilde P^{ijsl}\nonumber\\
&=&-(b_{lj}\delta_s^t-\delta_l^tb_{sj})\zeta_t\tilde P^{ijsl}\nonumber\\
&=&-b_{lj}\zeta_s\tilde P^{ijsl}+b_{sj}\zeta_l\tilde P^{ijsl}\nonumber\\
&=&-2b_{lj}\zeta_s\tilde P^{ijsl},
\end{eqnarray}
where in the last equality, (\ref{antisymmetry}) is used again. In view of  (\ref{divergence-free}), (\ref{eq1_sec.3}) and (\ref{eq2_sec.3}),
 we derive
\begin{eqnarray}\label{eq_BHW}
\delta\mathbb{U}^i\nonumber
&=&\bigg(V\bar\nabla_l\bar\nabla _j\zeta_s+V\bar\nabla_s\bar\nabla_l\zeta_j-2b_{lj}\zeta_s V-\bar\nabla _j\zeta_s\bar\nabla_l V+\bar\nabla_l\zeta_j\bar \nabla_s V\bigg)\tilde P^{ijsl}+o(e^{-(n-1)r})\nonumber\\
&=&V\bar\nabla_l\bar\nabla _j(\zeta_s\tilde P^{ijsl})+V\bar\nabla_s\bar\nabla_l(\zeta_j\tilde P^{ijsl})-2b_{lj}\zeta_sV\tilde P^{ijsl}-\bar\nabla _j(\zeta_s\tilde P^{ijsl})\bar\nabla_l V\nonumber\\
&&+\bar\nabla_l(\zeta_j\tilde P^{ijsl})\bar \nabla_s V+o(e^{-(n-1)r})\nonumber\\
&=&\bar\nabla_l\big(V\bar\nabla _j(\zeta_s\tilde P^{ijsl})\big)-2\bar\nabla _j(\zeta_s\tilde P^{ijsl})\bar\nabla_l V-2b_{lj}\zeta_sV\tilde P^{ijsl}+\bar\nabla_s\big(V\bar\nabla_l(\zeta_j \tilde P^{ijsl})\big)+o(e^{-(n-1)r})\nonumber\\
&=&\bar\nabla_l\big(V\bar\nabla _j(\zeta_s\tilde P^{ijsl})\big)-2\bar\nabla _j(\zeta_s\tilde P^{ijsl}\bar\nabla_l V)+2\zeta_s\tilde P^{ijsl}\bar\nabla _j\bar\nabla_l V-2b_{lj}\zeta_sV\tilde P^{ijsl}\nonumber\\
&&+\bar\nabla_s\big(V\bar\nabla_l(\zeta_j \tilde P^{ijsl})\big)+o(e^{-(n-1)r})\nonumber\\
&=&\bar\nabla_l\big(V\bar\nabla _j(\zeta_s\tilde P^{ijsl})\big)-2\bar\nabla _j(\zeta_s\tilde P^{ijsl} \bar\nabla_l V)+\bar\nabla_s\big(V\bar\nabla_l(\zeta_j \tilde P^{ijsl})\big)+o(e^{-(n-1)r}).\nonumber
\end{eqnarray}
In the last equality, we have used again $V\in \mathbb{N}_b$. By the first Bianchi identity  \eqref{Bianchi}
of $\tilde P$, we have
$$
\bar\nabla_l\big(V\bar\nabla _j(\zeta_s\tilde P^{ijsl})\big)=-\bar\nabla_l\big(V\bar\nabla _j(\zeta_s\tilde P^{jsil})\big)- \bar\nabla_l\big(V\bar\nabla _j(\zeta_s\tilde P^{sijl})\big).
$$
On the other hand, together with an exchange of a cycle $s,j,l$ and the symmetry, one has
$$
\bar\nabla_l\big(V\bar\nabla _j(\zeta_s\tilde P^{sijl})\big)=\bar\nabla_s\big(V\bar\nabla_l(\zeta_j \tilde P^{ijsl})\big).
$$
Hence we have
\be\label{2form}
\begin{array}{rcl}

\ds \delta\mathbb{U}^i & = & \ds\vs
-\bar\nabla_l\big(V\bar\nabla _j(\zeta_s\tilde P^{jsil})\big)-2\bar\nabla _j(\zeta_s\tilde P^{ijsl} \bar\nabla_lV)+o(e^{-(n-1)r})\\
&=& \ds\vs
 -\bar\nabla_l\big(V\bar\nabla _j(\zeta_s\tilde P^{jsil})+2  \zeta_s\tilde P^{ilsj} \bar\nabla_jV  \big)+o(e^{-(n-1)r})\\
 &=& -\ds \bar\nabla_l \a^{il} +o(e^{-(n-1)r}),
\end{array}\ee
where
\[  \a^{il}= V\bar\nabla _j(\zeta_s\tilde P^{jsil})+2  \zeta_s\tilde P^{ilsj} \bar\nabla_jV, \]
is anti-symmetric.
Therefore we conclude that
$$\mathbb{U}_2^i-\mathbb{U}_1^i=-\ds \bar\nabla_l \a^{il}
+o(e^{-(n-1)r}),$$
that is, up to the parts decay sufficiently fast which will not contribute to the integral at infinity,  $\mathbb{U}_2^i-\mathbb{U}_1^i$ is a divergence of some alternative $2$-vector field. We consider
$\mathbb{U}_2-\mathbb{U}_1$ as a term of  one form. Let $*$ be the Hodge operator with respect to the hyperbolic metric $b$. We have
$$
*(\mathbb{U}_2-\mathbb{U}_1)=d \tilde \alpha+o(e^{-(n-1)r}),
$$
where $\tilde \alpha$ is a $(n-2)$-form obtained from $\alpha$.
This  implies the geometric invariance of the mass functional (see also \cite{CH, Herzlich, M}).
\end{proof}

We now give the  precise  definition of the Gauss-Bonnet-Chern mass \eqref{GBC_h} for asymptotically hyperbolic manifolds in the following definition.
\begin{defi}
If the mass functional $H_k^{\Phi}:\mathbb N_b\rightarrow\R$ is timelike future directed, i.e., $H_k^{\Phi}(V)>0$ for all $V\in \mathbb{N}^+$, then
the higher order mass, the Gauss-Bonnet-Chern mass \eqref{GBC_h} for asymptotically hyperbolic manifolds,  is defined by
\begin{equation}\label{GBC_h2}
 m^{\H}_k:=c(n,k)\inf_{V\in \mathbb{N}_{b}^1}H_k^{\Phi}(V).  \end{equation}
Here $c(n,k)$ is the  normalization constant   given in \eqref{eq_const}.
\end{defi}

The above results show that  $m^{\H}_k$ is an invariant for   asymptotically hyperbolic manifolds.
A similar discussion  as in subsection 2.1 implies
\[ m^{\H}_k= c(n,k)\inf_{\Phi}H_k^{\Phi}(V_{(0)}), \]
where the infimum takes over all asymptotically hyperbolic coordinate $\Phi$ satisfying (\ref{decaytau_order}) and (\ref{add2})
and $V_{(0)}$ is a fixed element in $ \mathbb{N}_{b}^1$.
In the following we fix
$$V=V_{(0)}=\cosh r.$$

We end this section by a discussion on the range of $\tau$.
\begin{rema}\label{rem3.4}
 From (\ref{HV}), one can check directly that  $m_k^{\H}$ vanishes if $\tau>\frac n k$, and hence the well-defined and the non-trivial range for
the Gauss-Bonnet-Chern mass $m_k^\H$ is $\tau\in(\frac{n}{k+1},\frac n k]$. The decay order of the anti-de Sitter Schwarzschild type metric (\ref{metric})  is just $\frac nk.$ Its new mass equals to $m^k$. See  Appendix A for more details.
\end{rema}

\section{Positive mass theorem for asymptotically hyperbolic graphs}

In this section, we investigate a special case that asymptotically hyperbolic
manifolds are given as graphs of asymptotically constant functions over hyperbolic space $\mathbb{H}^n.$
For the new asymptotically hyperbolic mass, we can show that the corresponding Riemannian positive mass theorem holds
for graphs when the modified Gauss-Bonnet curvature is nonnegative.

Following the notation in \cite{DGS}, we identify $\H^{n+1}$ with $(\H^n\times\R, b+V^2ds\otimes ds).$ Let $f:\mathbb H^n \to \mathbb R$ be a smooth function, then the induced metric on the graph
 $$\cM:=\{(x,s)\in\H^n\times\R|f(x)=s\},$$
 is $(\cM^n,g)=(\mathbb{H}^n,b+V^2df\otimes df)$.
 Suppose that the graph $(\cM,g)$ is asymptotically hyperbolic of decay order $\tau>\frac n{k+1}$ and
 $V\tilde L_k(g)$ is integrable.
\begin{rema}
If $f:\mathbb H^n \to \mathbb R$ satisfies
$$\|V\bv f\|_b+\|V\bv^2 f\|_b+ \|V\bv^3 f\|_b=O(e^{-\frac{\tau}{2}r}),\quad \tau>\frac{n}{k+1},$$
then the corresponding graph  $(\cM^n,g)=(\mathbb{H}^n,b+V^2df\otimes df)$ is asymptotically hyperbolic of decay order $\tau>\frac{n}{k+1}$.
\end{rema}
In local coordinates,
\begin{equation}\label{g}
g_{ij}=b_{ij}+V^2\bar\nabla_i f \bar\nabla_j f,
\end{equation}
and the inverse of $g_{ij}$ is
\begin{equation}\label{ginverse}
g^{ij}=b^{ij}-\frac{V^2\bar\nabla^i f \bar\nabla^j f}{1+V^2|\bar\nabla f|^2},
\end{equation}
where the norm is taken with respect to
the hyperbolic metric $b.$

\begin{prop}\label{massP}
Suppose $(\cM^n,g)=(\H^n,b+V^2\bar{\nabla}f\otimes\bar{\nabla}f)$, then we have
\begin{equation}\label{div.Lk}
\bar\nabla_s\bigg(\big(V\bar\nabla_l e_{tj}-e_{tj} \bar\nabla_l V\big) {{\tilde P}_{(k)}}^{stjl}\bigg)=\frac{V}{2}\tilde L_k,
\end{equation}
where $e_{ij}:=g_{ij}-b_{ij}=V^2\bar\nabla_i f \bar\nabla_j f.$
\end{prop}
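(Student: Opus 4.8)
\emph{Proof plan.} Identity \eqref{div.Lk} is the exact, graph version of the divergence formula \eqref{add1}: what must be shown is that for the graph metric $g=b+V^2\bar\nabla f\otimes\bar\nabla f$ the lower order remainder appearing in \eqref{add1} is identically zero. So the plan is to rerun the computation \eqref{decayestimate}--\eqref{add1} keeping every term, and to check that all the extra terms cancel, using the rank-one structure $e_{ij}=V^2\bar\nabla_if\bar\nabla_jf$ together with $V\in\mathbb N_b$.

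First I would assemble the ingredients. Writing $f_i:=\bar\nabla_if$, $f_{ij}:=\bar\nabla_i\bar\nabla_jf$, $V_i:=\bar\nabla_iV$ and $W^2:=1+V^2|\bar\nabla f|^2$, one records $e_{ij}=V^2f_if_j$ with its covariant derivatives and the Christoffel difference $A^s_{ij}:=\Gamma^s_{ij}(g)-\bar\Gamma^s_{ij}=\tfrac12 g^{sm}(\bar\nabla_ie_{mj}+\bar\nabla_je_{mi}-\bar\nabla_me_{ij})$; from \eqref{ginverse} one has $g^{sm}f_m=f^s/W^2$ and $g_{st}f^t=f_sW^2$, which gives the compact form $g_{st}A^t_{ij}=f_sS_{ij}-VV_sf_if_j$ with $S_{ij}:=VV_if_j+VV_jf_i+V^2f_{ij}$ \emph{symmetric} in $(i,j)$. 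One also needs $\widetilde{\rm Riem}(g)$; the cleanest way is to observe, following \cite{DGS}, that $(\H^n\times\R,\,b+V^2\,ds\otimes ds)$ is $\H^{n+1}$ and that $\cM$ is the graph of $f$ inside it, so that, since $\widetilde{\rm Riem}$ subtracts off precisely the curvature tensor of a space form of curvature $-1$, the Gauss equation gives $\tilde R_{ijsl}(g)=h_{is}h_{jl}-h_{il}h_{js}$ with $h$ the second fundamental form of $\cM$; a short computation with the warped Christoffel symbols of $b+V^2\,ds^2$ yields $h_{ij}=W^{-1}\big(Vf_{ij}+V_if_j+V_jf_i+V^2\langle\bar\nabla V,\bar\nabla f\rangle f_if_j\big)$ (the overall sign being immaterial, as $h$ enters only quadratically). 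In particular $\tilde L_k=\tilde R_{stjl}\tilde P^{stjl}_{(k)}=(2k)!\,\s_{2k}(h)$ by the antisymmetry of the generalized Kronecker delta, and $\tilde P_{(k)}$ becomes a Newton transformation of $h$. (Alternatively one substitutes $A^s_{ij}$ into \eqref{Rmdiff} and adds the correction $g_{is}g_{jl}-g_{il}g_{js}$.)

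Next I would redo \eqref{decayestimate} exactly. Expanding the left side of \eqref{div.Lk} by the Leibniz rule, and using that $\tilde P_{(k)}$ is divergence-free with respect to the $g$-connection $\nabla$ (not $\bar\nabla$), one passes from $\bar\nabla$ to $\nabla$ at the cost of corrections of the schematic form $A\ast\bar\nabla e\ast\tilde P_{(k)}$; one uses $\bar\nabla_s\bar\nabla_lV=Vb_{sl}$ to discard the Hessian-of-$V$ term; and one rewrites the right side $\tfrac V2\tilde L_k=\tfrac V2\tilde R_{stjl}\tilde P^{stjl}_{(k)}$ via the formula for $\tilde R_{stjl}(g)$. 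The result is an exact identity $\tfrac V2\tilde L_k=\bar\nabla_s\mathbb U^s+E$, where $\mathbb U^s=(V\bar\nabla_le_{tj}-e_{tj}\bar\nabla_lV)\tilde P^{stjl}_{(k)}$ and $E$ is a finite sum of explicit lower order terms: the quadratic Christoffel term $V\tilde P^{ijsl}_{(k)}g_{st}A^t_{im}A^m_{jl}$ from \eqref{Rmdiff}, the $g$-versus-$b$ discrepancy $e_{is}e_{jl}$ hidden in $g_{is}e_{jl}$, and the $A\ast(\cdots)$ corrections from changing connections. It then remains to prove $E=0$; here the rank-one structure does the work, since in every term of $E$ a symmetric block ($f_if_j$, or $S_{ij}$, or a $b_{ij}$ produced by $\bar\nabla^2V$) is contracted into an antisymmetric pair of slots of $\tilde P_{(k)}$ and so drops, while the remaining pieces are paired off using the first Bianchi identity \eqref{Bianchi}, the antisymmetries \eqref{antisymmetry}, and one last application of $\Hess^bV=Vb$.

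The only genuine difficulty is this last cancellation, whose bookkeeping grows with $k$: since $\tilde P_{(k)}$ already carries $k-1$ curvature factors, each itself quadratic in $h$ (hence in $f$), the number of correction terms in $E$ proliferates. I would keep this under control by working throughout at the level of the fully antisymmetrized delta $\d^{i_1\cdots i_{2k}}_{j_1\cdots j_{2k}}$: using $\tilde R_{ab}{}^{cd}(g)=h^c_ah^d_b-h^d_ah^c_b$ one recognizes $\tilde P_{(k)}$ as a Newton transformation of $h$, and the identity reduces to a hyperbolic analogue of the classical fact that the Newton transformation of the Hessian of a function is divergence-free, the curvature terms it would otherwise produce being absorbed exactly by the modification $g_{is}g_{jl}-g_{il}g_{js}$ built into $\widetilde{\rm Riem}$ together with $\Hess^bV=Vb$. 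Everything else is routine, if lengthy; once \eqref{div.Lk} holds, integrating it over $\cM$ yields the mass formulas of Theorems \ref{PMthm} and \ref{re}.
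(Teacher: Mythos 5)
Your ingredients are the right ones --- the Gauss equation $\tilde R_{ijsl}=h_{is}h_{jl}-h_{il}h_{js}$ in $\H^{n+1}$, the identification $\tilde L_k=(2k)!\,\s_{2k}(h)$, and the recognition of $\tilde P_{(k)}$ as a Newton transformation of $h$ --- and your closing paragraph gestures at what is in fact the paper's argument. But the route you actually propose to follow, namely rerunning \eqref{decayestimate} exactly and proving that the remainder $E$ vanishes, is left unexecuted at its only hard point, and the mechanism you claim for it is too crude. The quadratic Christoffel terms $V\tilde P^{ijsl}_{(k)}g_{st}A^t_{im}A^m_{jl}$ do \emph{not} all die by contracting a symmetric block against an antisymmetric pair of slots of $\tilde P_{(k)}$: using your own formula $g_{st}A^t_{im}=f_sS_{im}-VV_sf_if_m$, a cross term such as $f_sS_{im}\,f^mS_{jl}$ places the symmetric factor $S_{jl}$ in the second and fourth slots of $\tilde P^{ijsl}_{(k)}$, between which there is no antisymmetry. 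These survivors are precisely the $h\wedge h$ part of the Gauss equation; they must cancel against the $e\ast e$ discrepancy and the connection-change corrections, a bookkeeping you have not carried out and which proliferates with $k$ exactly as you fear.

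The paper sidesteps all of this by a Reilly-type closed-form identification of the flux. Using only the antisymmetries \eqref{antisymmetry} and the rank-one structure $e_{tj}=V^2\bar\nabla_tf\,\bar\nabla_jf$, one shows \emph{exactly} (not modulo lower order terms) that
\[
\big(V\bar\nabla_l e_{tj}-e_{tj}\bar\nabla_lV\big)\tilde P^{stjl}_{(k)}
=\frac{(2k-1)!}{2}\,(T_{(2k-1)})^s_p\,\frac{V^2\bar\nabla^pf}{\sqrt{1+V^2|\bar\nabla f|^2}},
\]
because every unwanted term carries two $\bar\nabla f$ factors in a genuinely antisymmetric pair of slots. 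One then computes the $\bar\nabla$-divergence of the right-hand side directly: the Newton tensor $T_{(2k-1)}$ is divergence-free for the induced metric $g$ (the ambient space having constant curvature), the change from $\nabla$ to $\bar\nabla$ is controlled by the explicit Christoffel difference, and the one residual block vanishes because the symmetric $(T_{(2k-1)})^{sq}$ is contracted with an antisymmetric tensor. What remains is $V(T_{(2k-1)})^s_ph^p_s=2kV\s_{2k}(h)$, i.e.\ $\tfrac V2\tilde L_k$. If you want to rescue your plan, the missing step is precisely this identity for the flux vector field; once it is in hand there is no remainder $E$ to kill, and the asserted term-by-term cancellation is never needed.
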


\begin{proof}
First note that the induced second fundamental form of $\cM^n$ is given by (cf.\cite{DGS})
\begin{equation}\label{II}
h_{ij}=\frac{V}{\sqrt{1+V^2|\bar\nabla f|^2}}\bigg(\bar\nabla_i\bar\nabla_j f+\frac{\bar\nabla_i f\bar\nabla_j V+\bar\nabla_i V\bar\nabla_j f}{V}+V\langle\bar\nabla f, \bar\nabla V\rangle \bar\nabla_i f\bar\nabla_j f\bigg).
\end{equation}
Thus  the shape operator is
\be\label{shape}\begin{array}{rcl}
\ds \vs h^i_j &:=& \ds g^{is}h_{sj}\\
&=&\ds \frac{V}{\sqrt{1+V^2|\bar\nabla f|^2}}\bigg(\bar\nabla^i\bar\nabla_j f+\frac{\bar\nabla^i f\bar\nabla_j V}{V(1+V^2|\bar\nabla f|^2)}+\frac{\bar\nabla^i V\bar\nabla_j f}{V}-\frac{V^2\bar\nabla^i f\bar\nabla^s f\bar\nabla_s\bar\nabla_j f}{1+V^2|\bar\nabla f|^2} \bigg).
\end{array}
\ee
It follows from (\ref{Gammadiff}), (\ref{g}) and (\ref{ginverse}) that
\begin{eqnarray}\label{eq2_Prop4.1}
\Gamma_{ij}^l-\bar{\Gamma}_{ij}^l&=&\frac{\bar{\nabla}^l f}{1+V^2|\bar{\nabla}f|^2}\bigg(V^2\bar{\nabla}_i\bar{\nabla}_j f +V\bar{\nabla}_i V \bar{\nabla}_j f+V\bar{\nabla}_j V \bar{\nabla}_i f\bigg)\nonumber\\
&&-V\bar{\nabla}_i f\bar{\nabla}_j f\bigg(\bar{\nabla}^l V-\frac{V^2\bar{\nabla}^l f\langle\bar{\nabla}f,\bar{\nabla}V\rangle}{1+V^2|\bar{\nabla}f|^2}\bigg).
\end{eqnarray}
In particular,
\begin{equation}\label{eq3_Prop4.1}
{\Gamma}_{is}^s-\bar\Gamma_{is}^s=\frac{\bar{\nabla}^s f}{1+V^2|\bar{\nabla}f|^2}(V^2 \bar{\nabla}_i \bar{\nabla}_s f+ V \bar{\nabla}_i V  \bar{\nabla}_s f).
\end{equation}
Recalling (\ref{tildeRm}), by the Gauss formula we have
\begin{equation}\label{Gauss}
{\tilde R_{ij}}^{\quad\!\!sl}=h_i^s h_j^l-h_i^l h_j^s.
\end{equation}
Substituting (\ref{Gauss}) into (\ref{tildeLk}), (\ref{tildePk}), we infer from (\ref{sigmak}) that
\begin{eqnarray}\label{tildeL&S}
\tilde L_k&=&\d^{i_1i_2\cdots i_{2k-1}i_{2k}}
_{j_1j_2\cdots j_{2k-1}j_{2k}} {h_{i_1}^{j_1}}\cdots {h_{i_{2k}}^{j_{2k}}}\nonumber\\
&=&(2k)!\;\sigma_{2k}(h)
\end{eqnarray}
and
$$\tilde P_{(k)}^{st jl}=\frac 12\d^{i_1i_2\cdots i_{2k-3}i_{2k-2}st}
_{j_1j_2\cdots  j_{2k-3}j_{2k-2} j_{2k-1}j_{2k}} h_{i_1}^{j_1} h_{i_2}^{j_2}\cdots h_{i_{2k-2}}^{j_{2k-2}}g^{j_{2k-1}j}g^{j_{2k}l},
$$
which implies by (\ref{Tk}) that
\begin{equation}\label{relation2}
2{\tilde P_{(k)}}^{stjl} h_{sj}=(2k-1)!\;(T_{(2k-1)})^t_p g^{pl}.
\end{equation}
Here $h=(h_{ij})$ is the second fundamental form given by \eqref{II}.
We derive from (\ref{antisymmetry}),  (\ref{II})  and (\ref{relation2}) that
\begin{eqnarray}\label{eq4_Prop4.1}
\big(V\bar\nabla_l e_{tj}-e_{tj} \bar\nabla_l V\big) {{\tilde P}_{(k)}}^{stjl}
&=&\big(V\bar\nabla_l (V^2\bar{\nabla}_t f\bar{\nabla}_j f )-V^2\bar{\nabla}_t f\bar{\nabla}_j f\bar\nabla_l V\big) {{\tilde P}_{(k)}}^{stjl}\nonumber\\
&=&\big(V^3\bar{\nabla}_l\bar{\nabla}_t f \bar{\nabla}_j f+V^2\bar{\nabla}_l V \bar{\nabla}_t f\bar{\nabla}_j f
\big){{\tilde P}_{(k)}}^{stjl}\nonumber\\
&=&V\left(\bar{\nabla}_l\bar{\nabla}_t f +\frac{\bar{\nabla}_l V \bar{\nabla}_t f}{V}\right){{\tilde P}_{(k)}}^{stjl}(V^2\bar\nabla_j f)\nonumber\\
&=&h_{lt}\sqrt{1+V^2|\bar\nabla f|^2}{{\tilde P}_{(k)}}^{stjl}(V^2\bar\nabla_j f)\nonumber\\
&=&\frac{(2k-1)!}{2}(T_{(2k-1)})_p^s g^{pj}V^2\bar\nabla_j f\sqrt{1+V^2|\bar\nabla f|^2}\nonumber\\
&=&\frac{(2k-1)!}{2}(T_{(2k-1)})_p^s\frac{V^2\bar\nabla^p f}{\sqrt{1+V^2|\bar\nabla f|^2}},
\end{eqnarray}
where the fourth equality follows from (\ref{II}) and (\ref{antisymmetry}).
Since the ambient space is the hyperbolic space $\H^n$ with constant curvature $-1$, the Newton tensor is divergence-free with the induced metric $g$. (For the proof see \cite{ALM} for instance.)

It follows that
\begin{eqnarray*}
&&(\bar\nabla_s(T_{(2k-1)})^s_p) \bar\nabla^p f\\
&=&(\nabla_s(T_{(2k-1)})^s_p) \bar\nabla^p f+(T_{(2k-1)})^s_q(\Gamma_{ps}^q-\bar\Gamma_{ps}^q)\bar\nabla^p f-(T_{(2k-1)})^q_p(\Gamma_{sq}^s-\bar\Gamma_{sq}^s)\bar\nabla^p f\\
&=&(T_{(2k-1)})^s_q(\Gamma_{ps}^q-\bar\Gamma_{ps}^q)\bar\nabla^p f-(T_{(2k-1)})^q_p(\Gamma_{sq}^s-\bar\Gamma_{sq}^s)\bar\nabla^p f.
\end{eqnarray*}
Thus from  (\ref{eq2_Prop4.1}) and (\ref{eq3_Prop4.1}), we have
\begin{eqnarray*}
(\bar\nabla_s(T_{(2k-1)})^s_p)\bar\nabla^p f
&=&(T_{(2k-1)})^s_q\bigg\{\frac{\bar{\nabla}^q f}{1+V^2|\bar{\nabla}f|^2}\left(V^2\bar{\nabla}_p\bar{\nabla}_s f +V\bar{\nabla}_p V \bar{\nabla}_s f+V\bar{\nabla}_s V \bar{\nabla}_p f\right)\\
&&-V\bar{\nabla}_p f\bar{\nabla}_s f\left(\bar{\nabla}^q V-\frac{V^2\bar{\nabla}^q f\langle\bar{\nabla}f,\bar{\nabla}V\rangle}{1+V^2|\bar{\nabla}f|^2}\right)\bigg\}\bar\nabla^p f\\
&&-(T_{(2k-1)})^q_p \frac{\bar{\nabla}^s f}{1+V^2|\bar{\nabla}f|^2}(V^2 \bar{\nabla}_q \bar{\nabla}_s f+ V \bar{\nabla}_q V  \bar{\nabla}_s f)\bar\nabla^p f\\
&=&(T_{(2k-1)})^s_q\left(V\langle\bar\nabla V,\bar\nabla f\rangle\bar\nabla^q f\bar\nabla_s f-V|\bar\nabla f|^2\bar\nabla_s f\bar\nabla^qV\right).
\end{eqnarray*}
Therefore by above,  we derive
\begin{eqnarray*}
&&\bar\nabla_s\left((T_{(2k-1)})^s_p \frac{V^2\bar\nabla^p f}{\sqrt{1+V^2|\bar\nabla f|^2}}\right)\\
&=&\frac{V^2}{\sqrt{1+V^2|\bar\nabla f|^2}}\bar\nabla_s(T_{(2k-1)})^s_p\bar\nabla^p f+(T_{(2k-1)})^s_p\bar\nabla_s\left(\frac{V^2\bar\nabla^p f}{\sqrt{1+V^2|\bar\nabla f|^2}}\right)\\
&=&\frac{V^2}{\sqrt{1+V^2|\bar\nabla f|^2}}(T_{(2k-1)})^s_q\big(V\langle\bar\nabla V,\bar\nabla f\rangle\bar\nabla^q f\bar\nabla_s f-V|\bar\nabla f|^2\bar\nabla_s f\bar\nabla^qV\big)\\
&&+(T_{(2k-1)})^s_p\bigg(\frac{V^2\bar\nabla_s\bar\nabla^p f+2V\bar\nabla_s V\bar\nabla^p f}{\sqrt{1+V^2|\bar\nabla f|^2}}-\frac{V^2\bar\nabla^p f(V|\bar\nabla f|^2\bar\nabla_s V+V^2\bar\nabla_s\bar\nabla^q f\bar\nabla_q f)}{(1+V^2|\bar\nabla f|^2)^{\frac 32}}\bigg)\\
&=&\frac{V(T_{(2k-1)})^s_p}{\sqrt{1+V^2|\bar\nabla f|^2}}\bigg\{\big(V^2\langle\bar\nabla V,\bar\nabla f\rangle\bar\nabla^p f\bar\nabla_s f-V^2|\bar\nabla f|^2\bar\nabla_s f\bar\nabla^pV\big)\\
&&+V\bar\nabla_s\bar\nabla^p f+\bar\nabla_s V\bar\nabla^p f+\frac{\bar\nabla_s V\bar\nabla^p f }{1+V^2|\bar\nabla f|^2}-\frac{V^3\bar\nabla^p f\bar\nabla_s\bar\nabla^q f\bar\nabla_q f}{1+V^2|\bar\nabla f|^2}\bigg\}\\
&=&\frac{V(T_{(2k-1)})^s_p}{\sqrt{1+V^2|\bar\nabla f|^2}}\big(V^2\langle\bar\nabla V,\bar\nabla f\rangle\bar\nabla^p f\bar\nabla_s f-V^2|\bar\nabla f|^2\bar\nabla_s f\bar\nabla^pV+\bar\nabla_s V\bar\nabla^p f-\bar\nabla_s f\bar\nabla^p V\big)\\
&&+V(T_{(2k-1)})^s_p h^p_s.
\end{eqnarray*}
We claim that the first term in the last equality of the above equation indeed vanishes.
We define a new $(1,1)$ tensor by
$$A^p_s:=V^2\langle\bar\nabla V,\bar\nabla f\rangle\bar\nabla^p f\bar\nabla_s f-V^2|\bar\nabla f|^2\bar\nabla_s f\bar\nabla^pV+\bar\nabla_s V\bar\nabla^p f-\bar\nabla_s f\bar\nabla^p V.$$
Then we have
\begin{eqnarray*}
&&\frac{V(T_{(2k-1)})^s_p}{\sqrt{1+V^2|\bar\nabla f|^2}}\big(V^2\langle\bar\nabla V,\bar\nabla f\rangle\bar\nabla^p f\bar\nabla_s f-V^2|\bar\nabla f|^2\bar\nabla_s f\bar\nabla^pV+\bar\nabla_s V\bar\nabla^p f-\bar\nabla_s f\bar\nabla^p V\big)\\
&=& \frac{V(T_{(2k-1)})^s_p}{\sqrt{1+V^2|\bar\nabla f|^2}}A_s^p
= \frac{V(T_{(2k-1)})^{sq}}{\sqrt{1+V^2|\bar\nabla f|^2}}g_{pq} A_s^p.
\end{eqnarray*}
We only need to show that
$g_{pq}A_s^p$ is anti-symmetric, which is in fact easy to check
\begin{eqnarray*}
g_{pq}A_s^p&=&(b_{ pq}+V^2\bv_p f\bv _q f)A^p_s \\
&=& V^2 \langle \bv V, \bv f\rangle \bv _q f\bv_s f-V^2 |\bv f|^2 \bv_s f \bv _q V + \bv _s V \bv _q f -\bv_s f \bv _q V \\
&& +V^4 \langle \bv V, \bv f\rangle |\bv f|^2 \bv_q f\bv_s f -V^4 |\bv f|^2 \langle \bv f,\bv V\rangle \bv_s f \bv _q f \\
&&+V^2 |\bv f|^2 \bv_s V \bv_q f - V^2\langle \bv f, \bv V \rangle \bv_q f \bv_s f\\
&=&(1+V^2|\bar\nabla f|^2)(\bv _s V \bv _q f -\bv_s f \bv _q V).
\end{eqnarray*}
Since $(T_{(2k-1)})^{sq}$ is symmetric, we get the desired result that
$$\frac{V(T_{(2k-1)})^s_p}{\sqrt{1+V^2|\bar\nabla f|^2}}\big(V^2\langle\bar\nabla V,\bar\nabla f\rangle\bar\nabla^p f\bar\nabla_s f-V^2|\bar\nabla f|^2\bar\nabla_s f\bar\nabla^pV+\bar\nabla_s V\bar\nabla^p f-\bar\nabla_s f\bar\nabla^p V\big)=0,$$
which yields from (\ref{sigmak})  that
\begin{equation}\label{eq4.12}
\bar\nabla_s\left((T_{(2k-1)})^s_p \frac{V^2\bar\nabla^p f}{\sqrt{1+V^2|\bar\nabla f|^2}}\right)=V(T_{(2k-1)})^s_p h^p_s=2k V\sigma_{2k}(h).
\end{equation}
Finally,
\eqref{div.Lk} follows from \eqref{tildeL&S}, (\ref{eq4_Prop4.1}) and \eqref{eq4.12}.
\end{proof}

\begin{lemm}\label{lem_rem}
 From (\ref{eq4_Prop4.1}), we  have the following equivalent form of hyperbolic mass integral (\ref{HV}) for asymptotically hyperbolic graphs
\begin{equation}\label{equv.HV}
m_k^{\H}=c(n,k)\frac{(2k-1)!}{2}\lim_{r\rightarrow\infty}\int_{S_r}(T_{(2k-1)})_p^s\frac{V^2\bar\nabla^p f}{\sqrt{1+V^2|\bar\nabla f|^2}}\nu_s d\mu.
\end{equation}
\end{lemm}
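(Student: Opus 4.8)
The plan is to read off \eqref{equv.HV} directly from Proposition~\ref{massP}. Indeed, the integrand in the defining surface integral \eqref{HV} of the mass functional $H_k^{\Phi}(V)$ is exactly the contracted expression $\big(V\bar\nabla_l e_{js}-e_{js}\bar\nabla_l V\big)\tilde P^{ijsl}_{(k)}$ whose pointwise value, for the graph metric $g_{ij}=b_{ij}+V^2\bar\nabla_i f\,\bar\nabla_j f$, was computed in the course of proving that proposition; so the lemma is essentially a substitution. With the conventions fixed in this section ($V=V_{(0)}=\cosh r$, the natural chart of the graph, and the reduction to a single $V_{(0)}$ justified around \eqref{GBC_h2}), we have $m_k^{\H}=c(n,k)\,H_k^{\Phi}(V_{(0)})$, and by Theorem~\ref{thm1} this limit exists and is finite under the standing hypotheses $\tau>\frac{n}{k+1}$ and integrability of $V\tilde L_k$.

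First I would recall, from the proof of Proposition~\ref{massP}, the pointwise identity \eqref{eq4_Prop4.1}: combining the Gauss equation \eqref{Gauss}, the algebraic relation \eqref{relation2} expressing $\tilde P_{(k)}$ contracted with the shape operator through the Newton tensor $T_{(2k-1)}$, and the explicit second fundamental form \eqref{II}, one gets
\[
\big(V\bar\nabla_l e_{tj}-e_{tj}\bar\nabla_l V\big)\tilde P^{stjl}_{(k)}=\frac{(2k-1)!}{2}\,(T_{(2k-1)})^s_p\,\frac{V^2\bar\nabla^p f}{\sqrt{1+V^2|\bar\nabla f|^2}}\,.
\]
I would then relabel the dummy indices so that the left-hand side matches the integrand of \eqref{HV} — i.e.\ rename the free index of this identity to the one contracted against $\nu$ — the antisymmetries \eqref{antisymmetry} of $\tilde P_{(k)}$ making this harmless. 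Inserting the result into \eqref{HV}, pulling the constant $\frac{(2k-1)!}{2}$ out of the integral, multiplying by $c(n,k)$, and passing to the limit produces \eqref{equv.HV}.

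That is essentially all; I do not expect a genuine obstacle here. As a consistency check one may note that integrating the divergence identity \eqref{div.Lk} of Proposition~\ref{massP} over $\cM^n$ and applying the divergence theorem identifies the right-hand side of \eqref{equv.HV} with $c(n,k)\int_{\cM^n}\frac12 V\tilde L_k(1+V^2|\bar\nabla f|^2)^{-1/2}\,dV_g$, recovering Theorem~\ref{PMthm}, so the two mass formulas match. The only point requiring attention is the index bookkeeping in the substitution; the genuinely substantial work — the derivation of \eqref{eq4_Prop4.1}, which rests on the divergence-freeness of the Newton tensor in the constant-curvature ambient space together with \eqref{relation2} — has already been carried out in Proposition~\ref{massP}, so for this lemma it can be taken as known.
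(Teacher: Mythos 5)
Your proposal is correct and follows essentially the same route as the paper: the lemma is indeed just the substitution of the pointwise identity \eqref{eq4_Prop4.1} into the surface integral \eqref{HV}, with the index relabeling justified by the symmetries of $\tilde P_{(k)}$ and the normalization $m_k^{\H}=c(n,k)H_k^{\Phi}(V_{(0)})$ fixed earlier. The paper offers no further argument beyond this, so nothing is missing.
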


\begin{rema}
The method to show  Proposition $\ref{massP}$ is motivated by a classical paper of Reilly  \cite{Reilly0}. The method gives a simpler proof even for $k=1$ case (cf. \cite{DGS}). A similar argument works for the Gauss-Bonnet-Chern mass for
asymptotically flat manifolds, which provides a simply proof for the corresponding results in \cite{GWW}. Moreover, it also
provides a brief proof of the corresponding  Penrose inequality. See the next Section.

\end{rema}

\vspace{3mm}
Now we are ready to prove our main Theorem \ref{PMthm}.

\

\noindent {\it Proof of  Theorem \ref{PMthm}.}
Applying divergence theorem with Proposition \ref{massP}, we have
\begin{eqnarray*}
m_k^{\H}&=&c(n,k)\int_{\cM}\bar\nabla_s\bigg(\big(V\bar\nabla_l e_{tj}-e_{tj} \bar\nabla_l V\big) {{\tilde P}_{(k)}}^{stjl}\bigg)dV_b\\
&=&\frac{c(n,k)}{2}\int_{\cM}V\tilde L_k dV_b\\
&=&\frac{c(n,k)}{2}\int_{\cM}\frac{V\tilde L_k}{\sqrt{1+V^2|\bar\nabla f|^2}} dV_g,
\end{eqnarray*}
where the last equality holds due to the fact
$$dV_g=\sqrt{\mbox{det}g}\;dV_{b}=\sqrt{1+V^2|\bar\nabla f|^2}\;dV_{b}.$$
\qed

\section{Penrose inequality for graphs over $\mathbb{H}^n$ with a horizon type boundary }
Now we investigate the Penrose inequality related to the new mass for the asymptotically hyperbolic manifolds which can be realized as graphs.
Let $\Omega$ be a bounded open set in $\mathbb{H}^n$ and $\Sigma=\partial\Omega.$  If $f:\mathbb{H}^n\setminus\Omega\rightarrow \mathbb{R}$  is a smooth function
such that each connected component of $\Sigma$ is in a level set of $f$ and
\begin{equation}\label{x1}
|\bar\nabla f(x)|\rightarrow\infty \quad\hbox{  as }
x\rightarrow\Sigma,\end{equation}
then the graph of $f$, $(\cM^n,g)=(\mathbb{H}^n\setminus\Omega,b+V^2df\otimes df),$ is an asymptotically hyperbolic manifold with a
horizon $\Sigma.$  Without loss of generality we may assume that $\{(x, f(x) )\,|\, x\in \Sigma\}$  is included in $f^{-1}(0)$. In this case we can
identify $\{(x, f(x)) \,|\, x\in \Sigma\}$ with $\Sigma$.

On $\Sigma$, the outer unit normal vector induced by the hyperbolic metric $b$ is $$ \nu:=  \nu^i\frac{\partial}{\partial x^i }=-\frac{\bar\nabla
f}{|\bar\nabla f|}.$$
Then
$$\nu^i=-\frac{b^{ij}\bar{\nabla}_j f}{|\bar{\nabla} f|}=-\frac{\bar{\nabla}^i f}{|\bar{\nabla} f|}\quad\mbox{and}\quad \nu_i:= b_{ij}\nu^j=-\frac{\bar{\nabla}_i f}{|\bar{\nabla} f|}.$$

\begin{rema} \label{rem5.1} One can easily check  that the second fundamental forms of $\Sigma$ induced by $g$ and $b$ respectively differ by a multiple $
\frac{1}{\sqrt{1+V^2|\bv f|^2}}$. Hence we have the following equivalent statements, provided $\Sigma\subset\mathbb{H}^n$ is strictly mean convex:
\begin{itemize} \item $|\bar\nabla f|=\infty$ on $\Sigma$;
\item $\Sigma$ is minimal, i.e., the induced mean curvature by the metric $g$ vanishes;
\item $\Sigma$ is totally geodesic, i.e., the induced second fundamental form by the metric $g$  vanishes.
\end{itemize}
Therefore, in this case  $\Sigma$ is an area-minimizing  horizon if and only if $|\bar\nabla f|=\infty$ on $\Sigma$.
Hence
$|\bar\nabla f|=\infty$  is a natural assumption for horizons.
\end{rema}

\

\noindent{\it Proof of Theorem \ref{re}.}
In view of (\ref{equv.HV}), integrating by parts now gives an extra boundary term,
\begin{eqnarray*}
\frac{1}{c(n,k)}m_k^{\H}&=&\frac{(2k-1)!}{2}\lim_{r\rightarrow\infty}\int_{S_r}(T_{(2k-1)})_p^s\frac{V^2\bar\nabla^p f}{\sqrt{1+V^2|\bar\nabla f|^2}}\nu_s d\mu\\
&=&\frac{1}{2}\int_{\mathbb{H}^n\setminus\Omega}\frac{V\tilde L_k}{\sqrt{1+V^2|\bar\nabla f|^2}}dV_g-\frac{(2k-1)!}{2}\int_{\Sigma}(T_{(2k-1)})^s_p\frac{V^2\bv^p f}{\sqrt{1+V^2|\bar\nabla f|^2}}\nu_sd\mu.
\end{eqnarray*}
We  may choose the  coordinates such that $\{\frac{\partial}{\partial{x^2}},\cdots,\frac{\partial}{\partial{x^n}}\}$ span the tangential space
 of $\Sigma$ and $\frac{\partial}{\partial{x^1}}$ denotes the normal direction of $\Sigma$. To clarify the notations, in the following
  we will use the convention that the Latin letters stand for the index: $1,2,\cdots, n$
and the Greek letters stand for the index: $2, \cdots, n$.
 Due to the  assumption that  $\Sigma$ is in a level set of $f$, at any given point $p\in\Sigma$, we have
 \begin{equation}\label{h&S}
 \bv_{\alpha} f=0 \quad\mbox{and}\quad \bv_{\alpha}\bv_{\beta}f=|\bv f|B_{\alpha\beta},
 \end{equation}
 where $B$ is the second fundamental form with respect to the inward normal vector of $(\Sigma,\gamma)$ induced by the metric $b$.
Therefore we infer from (\ref{sigmak}) and (\ref{Tk}) that
\begin{eqnarray*}
\frac{1}{c(n,k)}m^{\H}_k&=&\frac{1}{2}\int_{\mathbb{H}^n\setminus\Omega}\frac{V\tilde L_k}{\sqrt{1+V^2|\bar\nabla f|^2}}dV_g+\frac{(2k-1)!}{2}\int_{\Sigma}(T_{(2k-1)})^1_1\frac{V^2|\bv f|}{\sqrt{1+V^2|\bar\nabla f|^2}}d\mu\\
&=&\frac{1}{2}\int_{\cM^n}\frac{V\tilde L_k}{\sqrt{1+V^2|\bar\nabla f|^2}}dV_g+\frac{(2k-1)!}{2}\int_{\Sigma}\sigma_{2k-1}V\bigg( \frac{V^2|\bar\nabla f|^2}{1+V^2|\bar\nabla f|^2}\bigg)^kd\mu\\
&=&\frac{1}{2}\int_{\cM^n}\frac{V\tilde L_k}{\sqrt{1+V^2|\bar\nabla f|^2}}dV_g+\frac{(2k-1)!}{2}\int_{\Sigma}V\sigma_{2k-1}d\mu.
\end{eqnarray*}
Here we have used the simple fact  in the  second equality
$$ (T_{(2k-1)})^1_1=\frac{1}{(2k-1)!} \delta_{1i_1i_2\cdots i_{2k-1}}^{1j_1j_2\cdots j_{2k-1}}h_{j_1}^{i_1} \cdots h_{j_{2k-1}}^{i_{2k-1}}
=\bigg( \frac{V|\bar\nabla f|}{\sqrt{1+V^2|\bar\nabla f|^2}}\bigg)^{2k-1}\s_{2k-1},$$
which follows from $$h^\alpha_\beta=\frac{V|\bar\nabla f|}{\sqrt{1+V^2|\bar\nabla f|^2}} b^{\alpha\delta}B_{\delta\beta}=\frac{V|\bar\nabla f|}{\sqrt{1+V^2|\bar\nabla f|^2}}B^{\alpha}_{\beta},$$ by (\ref{shape}) and (\ref{h&S}) (where $B^{\alpha}_{\beta}:=\gamma^{\alpha\delta}B_{\delta\beta}=
b^{\alpha\delta}B_{\delta\beta}$ )
and the last equality holds by the assumption (\ref{x1}). This finishes the proof of the Theorem. \qed

Using the Alexandrov-Fenchel inequality (\ref{eq1_thm}) proved in  the second part below, we have the following Penrose inequality, which is slightly stronger than
Theorem \ref{thm_P}.

\begin{theo}[Penrose Inequality]\label{Penrose thm}
Let $\Omega$ be a bounded open set in $\mathbb{H}^n$ and $\Sigma=\partial\Omega.$ If $f:\mathbb{H}^n\setminus\Omega\rightarrow \mathbb{R}$ is a smooth
function such that the graph $(\cM^n,g)=(\H^n\setminus\Omega,b+V^2df\otimes df)$ is asymptotically hyperbolic of decay order $\tau>\frac n{k+1}$ and $V\tilde L_k$ is integrable. Assume that
 each connected component of $\Sigma$ is in a level set of $f$ and $|\bar\nabla f(x)|\rightarrow\infty$ as
$x\rightarrow\Sigma$. Let $\Omega_i$ be connected components of $\Omega,i=1,\cdots,l$ and
let $\Sigma_i=\partial\Omega_i$ and suppose that each $\Omega_i$ is horospherical
convex, then
$$m^{\H}_k \ge  c(n,k)\int_{\cM^n}\frac 12\frac{V\tilde L_k}{\sqrt{1+V^2|\bv f|^2}}dV_g+\sum_{i=1}^l\frac{1}{2^k}{\left(\left(\frac{|\Sigma_i|}{\omega_{n-1}}\right)^{\frac{n}{k(n-1)}}+\left(\frac{|\Sigma_i|}{\omega_{n-1}}\right)^{\frac{n-2k}{k(n-1)}} \right)}^{k}.$$
In particular, $\tilde L_k\geq0$ implies
$$m^{\H}_k\geq\sum_{i=1}^l\frac{1}{2^k}{\left(\left(\frac{|\Sigma_i|}{\omega_{n-1}}\right)^{\frac{n}{k(n-1)}}+\left(\frac{|\Sigma_i|}{\omega_{n-1}}\right)^{\frac{n-2k}{k(n-1)}} \right)}^{k}.$$
Moreover,  equality is achieved by the anti-de Sitter Schwarzschild type metric (\ref{metric}).
\end{theo}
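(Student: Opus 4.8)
The plan is to obtain the result as a direct consequence of the mass formula in Theorem~\ref{re} together with the weighted Alexandrov--Fenchel inequality of Theorem~\ref{Alexandrov}. First I would apply Theorem~\ref{re}: all of its hypotheses hold here (each component of $\Sigma$ lies in a level set of $f$, and $|\bar\nabla f|\to\infty$ as $x\to\Sigma$), so
\[
m_k^{\H}=c(n,k)\left(\frac12\int_{\cM^n}\frac{V\tilde L_k}{\sqrt{1+V^2|\bar\nabla f|^2}}\,dV_g+\frac{(2k-1)!}{2}\int_{\Sigma}V\sigma_{2k-1}\,d\mu\right).
\]
Writing $\Sigma=\partial\Omega=\bigsqcup_{i=1}^{l}\Sigma_i$ with $\Sigma_i=\partial\Omega_i$, the last integral splits as $\sum_{i=1}^{l}\int_{\Sigma_i}V\sigma_{2k-1}\,d\mu$. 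By hypothesis each $\Omega_i$ is horospherical convex, that is, $\Sigma_i$ is a horospherical convex hypersurface (all principal curvatures $\ge1$, hence in particular $\sigma_{2k-1}>0$), so Theorem~\ref{Alexandrov} applies to each $\Sigma_i$ separately:
\[
\int_{\Sigma_i}V\sigma_{2k-1}\,d\mu\ge C_{n-1}^{2k-1}\,\omega_{n-1}\left(\left(\frac{|\Sigma_i|}{\omega_{n-1}}\right)^{\frac{n}{k(n-1)}}+\left(\frac{|\Sigma_i|}{\omega_{n-1}}\right)^{\frac{n-2k}{k(n-1)}}\right)^{k}.
\]

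Next I would check that the normalizations match. Using $c(n,k)=\frac{(n-2k)!}{2^{k-1}(n-1)!\,\omega_{n-1}}$ from \eqref{eq_const} and $C_{n-1}^{2k-1}=\binom{n-1}{2k-1}=\frac{(n-1)!}{(2k-1)!\,(n-2k)!}$, a one-line cancellation gives
\[
c(n,k)\cdot\frac{(2k-1)!}{2}\cdot C_{n-1}^{2k-1}\,\omega_{n-1}=\frac{1}{2^{k}}.
\]
Substituting the per-component bounds into the mass formula and leaving the bulk term $c(n,k)\,\frac12\int_{\cM^n}V\tilde L_k/\sqrt{1+V^2|\bar\nabla f|^2}\,dV_g$ untouched then produces exactly the first displayed inequality of the theorem. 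Since $V=\cosh r>0$, the dominant energy condition $\tilde L_k\ge0$ makes the bulk term nonnegative, and dropping it gives the second inequality.

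Finally, for the equality claim I would appeal to Appendix~A, where the anti-de Sitter--Schwarzschild type metric \eqref{metric} is realized as a graph over $\H^n$ outside a geodesic ball, its modified Gauss--Bonnet curvature is shown to vanish identically ($\tilde L_k\equiv0$), and its Gauss--Bonnet--Chern mass is computed; its horizon is a geodesic sphere centered at $x_0$, which is precisely the equality case in Theorem~\ref{Alexandrov}. Thus all inequalities above are saturated for this metric. Granting Theorems~\ref{re} and~\ref{Alexandrov}, there is essentially no obstacle at this stage — the argument is pure bookkeeping — and the genuine difficulty has been moved entirely into the weighted Alexandrov--Fenchel inequality of Theorem~\ref{Alexandrov}, whose proof occupies the second part of the paper; the only small points to be careful about here are the cancellation of the combinatorial constant and the fact that horospherical convexity of the solid region $\Omega_i$ is the same as that of its boundary $\Sigma_i$ (which also furnishes $\sigma_{2k-1}>0$, needed to sum the component inequalities).
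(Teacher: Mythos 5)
Your proposal is correct and follows exactly the route the paper takes: the paper's own proof of this theorem is the one-line remark that it ``follows from Theorem \ref{re} and the Alexandrov-Fenchel inequality, Theorem \ref{wAF}, with the equality case given by the anti-de Sitter Schwarzschild example,'' and you have simply filled in the componentwise application of the weighted Alexandrov-Fenchel inequality and the (correct) cancellation $c(n,k)\cdot\frac{(2k-1)!}{2}\cdot C_{n-1}^{2k-1}\,\omega_{n-1}=2^{-k}$ that the authors leave implicit.
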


\begin{proof}The Penrose inequality follows from Theorem \ref{re} and the  Alexandrov-Fenchel inequality, Theorem \ref{wAF} below.
The last statement is proved in the following example.
\end{proof}

We end this section with an interesting example.

\begin{exam}\label{Schwarzchild}
The generalized anti-de Sitter Schwarzschild space-time is given by
\beq\label{metric_SR}
-(1+\rho^2-\frac{2m}{\rho^{\frac nk-2}})dt^2+(1+\rho^2-\frac{2m}{\rho^{\frac nk-2}})^{-1}d\rho^2+\rho^2d\Theta^2,
\eeq
where $d\Theta^2$ is the round metric on $\mathbb{S}^{n-1}.$
When $k=1$ we recover anti-de Sitter Schwarzschild space-time of the Einstein gravity. \\
\end{exam}
See  also \cite{CTZ} or example $2.8$ in \cite{CLS}.
Restricting to the time slice $t=0$, we obtain the anti-de Sitter Schwarzschild metric
\begin{equation}\label{metric1}
g_{\rm adS-Sch}=(1+\rho^2-\frac{2m}{\rho^{\frac nk-2}})^{-1}d\rho^2+\rho^2d\Theta^2.
\end{equation}
One can realizes  metric (\ref{metric1})   as a graph over the hyperbolic metric $\H^{n}.$
 In the transformation of coordinates $\rho=\sinh r$, the hyperbolic metric $b=dr^2+\sinh^2 rd\Theta^2$ can be rewritten as $$b=\frac{d\rho^2}{1+\rho^2}+\rho^2d\Theta^2.$$
Hence to explicitly express the metric (\ref{metric1}) as a graph over $\H^{n}$, we need to find a function $f=f(\rho)$ satisfying
 $$V^2\left(\frac{\partial f}{\partial\rho}\right)^2=\frac{1}{1+\rho^2-\frac{2m}{\rho^{\frac n k-2}}}-\frac{1}{1+\rho^2}.$$
 Note that the function V  is now  equal to $\sqrt{1+\rho^2}$.   Let $\rho_0$ be the solution of
$$1+\rho^2-\frac{2m}{\rho^{\frac nk-2}}=0.$$ Then when $\rho$ approaches $\rho_0$, we have $\frac{\partial f}{\partial\rho}=O((\rho-\rho_0)^{-\frac 12}),$
 so that we can solve
 $$f(\rho)=\int_{\rho_0}^{\rho}\frac{1}{\sqrt{1+s^2}}\;\sqrt{\frac{1}{1+s^2-\frac{2m}{s^{\frac n k-2}}}-\frac{1}{1+s^2}}\;\;ds.$$
One may compare with the Euclidean case where the Schwarzschild metric can be written as a graph over $\R^n$ \cite{Lam}.
This example is a generalization of the one considered in \cite{DGS}, where is in fact the case $k=1$.

One can check  that the metric $g=g_{{\rm adS-Sch}}$  satisfies
\begin{equation}\label{Lk-sch}
\tilde L_k(g)=0,
\end{equation}
and
\begin{equation}\label{mass-sch}
m_k^{\H}=m^k.
\end{equation}
For the convenience of  the reader, we include the computations of (\ref{Lk-sch}) and (\ref{mass-sch}) in Appendix A.

In this example, the horizon is given by the surface $\rho=\rho_0$ with $\rho_0$ being the solution of
$$1+\rho^2-\frac{2m}{\rho^{\frac nk-2}}=0.$$
Then the horizon is $\{S_{\rho_0}:\rho_0^{\frac nk}+\rho_0^{\frac nk-2}=2m\}$ which implies the right hand side of inequality  (\ref{Pen_k}) is
\begin{eqnarray*}
RHS&=&\frac 1{2^k}\left(\left(\frac{\omega_{n-1}\rho_0^{n-1}}{\omega_{n-1}}\right)^{\frac{n}{k(n-1)}}+\left(\frac{\omega_{n-1}\rho_0^{n-1}}{\omega_{n-1}}\right)^{\frac{n-2k}{k(n-1)}}\right)^k\\
&=&\frac 1{2^k}\left(\rho_0^{\frac nk}+\rho_0^{\frac nk-2}\right)^k=\frac 1{2^k}(2m)^k\\
&=&m^k=m_k^{\H}.
\end{eqnarray*}
This means that equality in (\ref{Pen_k}) is  achieved by the adS Schwarzschild type metric \eqref{metric}.  \qed

{\appendix

\section{The anti-de Sitter Schwarzschild type metric}
In this Appendix, we first compute the modified Gauss-Bonnet curvature $\tilde L_k$ for the following Riemannian metric which was discussed in Section 5
\begin{equation}\label{Sch.app.}
g_{\rm adS-Sch}=(1+\rho^2-\frac{2m}{\rho^{\frac nk-2}})^{-1}d\rho^2+\rho^2d\Theta^2.
\end{equation}
Let us denote
$$
\varphi(\rho)= \sqrt{1+\rho^2-\frac{2m}{\rho^{\frac nk-2}}}.
$$

We have  for any vectors $X$, $Y$ tangential to the sphere $S_{\rho}$
\begin{eqnarray}\label{normaldir.}
\mathscr{R}(X\wedge\partial \rho)+X\wedge\partial \rho&=&-\left(\frac{n}{2k}-1\right)\frac{2m}{\rho^{\frac nk}}X\wedge\partial \rho,\\
\mathscr{R}(X\wedge Y)+X\wedge Y&=&\frac{2m}{\rho^{\frac nk}}X\wedge Y,\label{tang.dir.}
\end{eqnarray}
where $\mathscr{R}$ is the curvature operator.
This follows from a directly computation.
Inserting  \eqref{normaldir.}  and \eqref{tang.dir.}
into the (\ref{tildeLk}),  one can check that
$$\tilde L_k=(2k)!\left(C_{n-1}^{2k}-C_{n-1}^{2k-1}\left(\frac{n}{2k}-1\right)\right)\left(\frac{2m}{\rho^{\frac nk}}\right)^{k}=0.$$

\

In the following, we compute the new mass of metric (\ref{Sch.app.}) . Let $\frac{\partial}{\partial x^1}$ denotes the $\frac{\partial}{\partial \rho}$ direction and $\{\frac{\partial}{\partial x^2},\cdots\frac{\partial}{\partial x^n}\}$ be an orthogonal basis on $\S^{n-1}.$ We will use the convention that the Latin letters stand for the
index: $1, 2, \cdots, n$ and the Greek letters stand for the index:
$2, \cdots, n$.
Since  the hyperbolic metric  $$b=\frac{1}{1+\rho^2}d\rho^2+\rho^2d\Theta^2,$$ the only non-vanishing term is
$$e_{11}=g_{11}-b_{11}=\frac{1}{1+\rho^2-\frac{2m}{\rho^{\frac nk-2}}}-\frac{1}{1+\rho^2}=\frac{2m}{\rho^{\frac{n}{k}+2}}+\mbox{lower order terms}.$$
On the other hand, from (\ref{normaldir.}) and (\ref{tang.dir.}), we know that for the curvature term $\tilde R_{i_1 i_2}^{\quad\;j_1 j_2}$, the indices $\{j_1,j_2\}$ should be a permutation of $\{i_1,i_2\}$, otherwise $\tilde R_{i_1 i_2}^{\quad\; j_1 j_2}$ vanishes. Consequently, by  the definition of $\tilde P^{stlm}_{(k)}$ we compute by using  (\ref{normaldir.}), (\ref{tang.dir.}) that for fixed $\alpha,\beta,$
\begin{eqnarray}\label{eq1_appen.}
{{\tilde P}_{(k)}}^{1\alpha1\beta}
&=&\frac{1}{2^k}\d^{i_1i_2\cdots i_{2k-3}i_{2k-2}1\alpha}
_{j_1j_2\cdots  j_{2k-3}j_{2k-2} j_{2k-1}j_{2k}}{\tilde R_{i_1i_2}}^{\quad\, j_1 j_2}\cdots
{\tilde R_{i_{2k-3}i_{2k-2}}}^{\qquad\quad j_{2k-3}j_{2k-2}}g^{j_{2k-1}1}g^{j_{2k}\beta}\nonumber\\
&=&\frac{1}{2^k}\d^{i_1i_2\cdots i_{2k-3}i_{2k-2}1\alpha}
_{j_1j_2\cdots  j_{2k-3}j_{2k-2} 1\alpha}{\tilde R_{i_1i_2}}^{\quad\, j_1 j_2}\cdots
{\tilde R_{i_{2k-3}i_{2k-2}}}^{\qquad\quad j_{2k-3}j_{2k-2}}g^{11}g^{\alpha\beta}\nonumber\\
&=&\frac{1}{2^k} 2^{k-1}(2k-2)! \;C_{n-2}^{2k-2}\left(\frac{2m}{\rho^{\frac nk}}\right)^{k-1}g^{11}g^{\alpha\beta}\nonumber\\
&=&2^{k-2}(2k-2)!\; C_{n-2}^{2k-2}\left(\frac{m}{\rho^{\frac nk}}\right)^{k-1}(1+\rho^2-\frac{2m}{\rho^{\frac  nk-2}})g^{\alpha\beta}.
\end{eqnarray}
Moreover, from above calculations, one can check that this kind of term ${{\tilde P}_{(k)}}^{1\alpha\beta\delta}=0.$
And a direct computation gives   $$\bar\nabla_t(e_{\alpha\beta})=0.$$
Noticing the fact $$\nu_{\alpha}=0,$$ we derive from above that
\begin{eqnarray*}
(V\bar\nabla_t(e_{js})-e_{js}\bar\nabla_t V)\tilde P^{ijst}_{(k)}\nu_i
&=&V\bar\nabla_{\beta}(e_{1\alpha})\tilde P_{(k)}^{1\alpha1\beta}\nu_1.
\end{eqnarray*}
Using
$$
\bar\Gamma_{\alpha\beta}^1=-\rho(1+\rho^2)\delta_{\alpha\beta},
$$
and recalling $V=\sqrt{1+\rho^2}$, we compute further from above that
\begin{eqnarray}\label{eq2_appen.}
(V\bar\nabla_t(e_{js})-e_{js}\bar\nabla_t V)\tilde P^{ijst}_{(k)}\nu_i
=\frac{2m}{\rho^{\frac nk-2}}\delta_{\alpha\beta}\tilde P_{(k)}^{1\alpha1\beta}\nu_1+\mbox{lower order terms}.
\end{eqnarray}
Hence we infer from (\ref{eq1_appen.}) and (\ref{eq2_appen.}) that
\begin{eqnarray*}
(V\bar\nabla_t(e_{js})-e_{js}\bar\nabla_t V)\tilde P^{ijst}_{(k)}\nu_i&=&2^{k-1}(2k-2)!\; C_{n-2}^{2k-2}\frac{m^k}{\rho^{n-2}}(n-1)\nu_1+\mbox{lower order terms}\\
&=&2^{k-1}(2k-2)!\; C_{n-2}^{2k-2}\frac{m^k}{\rho^{n-1}}(n-1)+\mbox{lower order terms},
\end{eqnarray*}
where in the second equality we have used the fact $\nu_1=\frac 1{\sqrt{1+\rho^2}}.$
Therefore we conclude
\begin{eqnarray*}
m_k^{\H}&=&\frac{(n-2k)!} {2^{k-1}(n-1)!\;\omega_{n-1}}\lim_{\rho\rightarrow\infty}\int_{S_{\rho}}(V\bar\nabla_t(e_{js})-e_{js}\bar\nabla_t V)\tilde P^{ijst}_{(k)}\nu_i d\mu\\
&=&\frac{(n-2k)!} {2^{k-1}(n-1)!\;\omega_{n-1}} 2^{k-1}(2k-2)!\; C_{n-2}^{2k-2}\frac{m^k}{\rho^{n-1}}(n-1)\rho^{n-1}\omega_{n-1}\\
&=&m^k.
\end{eqnarray*}

\section{Proof of (\ref{ast})}
In this Appendix, we give the proof of (\ref{ast}).

\

\noindent{\it Proof of \eqref{ast}.}
Denote $g_1=({\Phi_1}^{-1})^{\ast}g$ and $g_2=({\Phi_2}^{-1})^{\ast}g$. Recall from \eqref{linearize} that we have
\be
\label{y1}
g_2 -g_1= {\mathcal L}_\zeta b+o(e^{-\frac{2n}{k+1}r}), \quad  \hbox{ with\;  } \zeta=o(e^{-\frac{n}{k+1}r}).
\ee
It follows easily that
\begin{equation}\label{fact_appen.}
\tilde R_{ij}^{\quad\!\! sp}(g_1)=o(e^{-\frac{n}{k+1}r}),\;\;\quad \tilde R_{ij}^{\quad\!\! sp}(g_2)=o(e^{-\frac{n}{k+1}r}).
\end{equation}
In order to prove (\ref{ast}), in view of (\ref{y1}) and (\ref{tildePk}), it suffices to show that
\begin{equation}\label{aim_appen.}
\tilde R_{ij}^{\quad\!\!sp}(g_2)-\tilde R_{ij}^{\quad\!\! sp}(g_1)=o(e^{-\frac{2n}{k+1}r}).
\end{equation}
In the proof, what we need to take care of is why the linear terms involving $\zeta$ cancel.

First by (\ref{tildeRm}), we compute
\begin{eqnarray*}
&&\tilde R_{ij}^{\quad\!\! sp}(g_2)-\tilde R_{ij}^{\quad\!\! sp}(g_1)\\
&=&({R_{ij}}^{sp}(g_2)+\delta_i^s\delta_j^p-\delta_i^p\delta_j^s)-({R_{ij}}^{sp}(g_1)+\delta_i^s\delta_j^p-\delta_i^p\delta_j^s)\\
&=&g_2^{pq}R_{ij\;\;\; q}^{\quad\! s}(g_2)-g_1^{pq}R_{ij\;\;\; q}^{\quad\! s}(g_1)=(g_2^{pq}-g_1^{pq})R_{ij\;\;\; q}^{\quad\! s}(g_2)+g_1^{pq}(R_{ij\;\;\; q}^{\quad\! s}(g_2)-R_{ij\;\;\; q}^{\quad\! s}(g_1))\\
&=&-({\mathcal L}_\zeta b)^{pq}\left(-(\delta_i^s(g_2)_{jq}-\delta_j^s(g_2)_{iq})\right)+g_1^{pq}(R_{ij\;\;\; q}^{\quad\! s}(g_2)-R_{ij\;\;\; q}^{\quad\! s}(g_1))+o(e^{-\frac{2n}{k+1}r})\\
&=:&I+II+o(e^{-\frac{2n}{k+1}r}),
\end{eqnarray*}
where in the third  equality, we have used the fact (\ref{fact_appen.}) and expression (\ref{y1}).

We begin with the simpler term $I$. We have from (\ref{y1}) that
\begin{eqnarray}\label{I_appen.}
I&:=&-({\mathcal L}_\zeta b)^{pq}\left(-(\delta_i^s(g_2)_{jq}-\delta_j^s(g_2)_{iq})\right)\nonumber\\
&=&-(\bv^p\zeta^q+\bv^q\zeta^p)\left(-(\delta_i^sb_{jq}-\delta_j^sb_{iq})\right)+o(e^{-\frac{2n}{k+1}r})\nonumber\\
&=&\delta_i^s(\bv^p\zeta_j+\bv_j\zeta^p)-\delta_j^s(\bv^p\zeta_i+\bv_i\zeta^p)+o(e^{-\frac{2n}{k+1}r}).
\end{eqnarray}
In the following, we deal with the second term $II$.  In view of (\ref{Gammadiff}), we have
\begin{eqnarray*}
\Gamma_{ij}^s(g_2)-\Gamma_{ij}^s(g_1)&=&\frac 12 (g_2)^{sl}\left(\nabla^{g_1}_i(g_2-g_1)_{lj}+\nabla^{g_1}_j(g_2-g_1)_{li}-\nabla^{g_1}_l(g_2-g_1)_{ij}\right)\\
&=&\frac 12 b^{sl}\left(\bv_i(g_2-g_1)_{lj}+\bv_j(g_2-g_1)_{li}-\bv_l(g_2-g_1)_{ij}\right)+o(e^{-\frac{2n}{k+1}r}).
\end{eqnarray*}
Substituting above into (\ref{Rmdiff}) and noting that the quadratic terms of Christoffel symbols having faster decay, we calculate
\begin{eqnarray*}
R_{ij\;\;\; q}^{\quad\! s}(g_2)-R_{ij\;\;\; q}^{\quad\! s}(g_1)&=&\bv_i(\Gamma_{jq}^s(g_2)-\Gamma_{jq}^s(g_1))-\bv_j(\Gamma_{iq}^s(g_2)-\Gamma_{iq}^s(g_1))+o(e^{-\frac{2n}{k+1}r})\\
&=&\frac 12 b^{sl}\big(\bv_i\bv_j(g_2-g_1)_{lq}+\bv_i\bv_q(g_2-g_1)_{lj}-\bv_i\bv_l(g_2-g_1)_{jq}\\
&&-\bv_j\bv_i(g_2-g_1)_{lq}-\bv_j\bv_q(g_2-g_1)_{li}+\bv_j\bv_l(g_2-g_1)_{iq}\big)+o(e^{-\frac{2n}{k+1}r})\\
&=&\frac 12 b^{sl}\big[(\bv_i\bv_j(g_2-g_1)_{lq}-\bv_j\bv_i(g_2-g_1)_{lq})+(\bv_i\bv_q\bv_j\zeta_l-\bv_j\bv_q\bv_i\zeta_l)\\
&&+(\bv_i\bv_q\bv_l\zeta_j-\bv_i\bv_l\bv_q\zeta_j)+(\bv_j\bv_l\bv_i\zeta_q-\bv_i\bv_l\bv_j\zeta_q)\\
&&+(\bv_j\bv_l\bv_q\zeta_i-\bv_j\bv_q\bv_l\zeta_i)]+o(e^{-\frac{2n}{k+1}r})\\
&=&\frac 12 b^{sl}(II_1+II_2+II_3+II_4+II_5)+o(e^{-\frac{2n}{k+1}r}),
\end{eqnarray*}
where in the third equality, we have used (\ref{y1}) and rearranged these terms.

In the following computations, we mainly rely on the Ricci identity to calculate each term carefully.
Applying the Ricci identity, we have
\begin{eqnarray*}
II_1&:=&\bv_i\bv_j(g_2-g_1)_{lq}-\bv_j\bv_i(g_2-g_1)_{lq}\\
&=&(g_2-g_1)_{lt}(-\bar R_{ij\;\;q}^{\quad\!t})+(g_2-g_1)_{tq}(-\bar R_{ij\;\;\,l}^{\quad\!t})\\
&=&(g_2-g_1)_{lt}(\delta_i^t b_{jq}-\delta_j^tb_{iq})+(g_2-g_1)_{tq}(\delta_i^t b_{jl}-\delta_j^tb_{il})\\
&=&(g_2-g_1)_{il}b_{jq}-(g_2-g_1)_{lj}b_{iq}+(g_2-g_1)_{iq}b_{jl}-(g_2-g_1)_{jq}b_{il}.
\end{eqnarray*}
Using the Ricci identity three times, we obtain,
\begin{eqnarray*}
II_2&:=&\bv_i\bv_q\bv_j\zeta_l-\bv_j\bv_q\bv_i\zeta_l\\
&=&(\bv_i\bv_q\bv_j\zeta_l-\bv_q\bv_i\bv_j\zeta_l)+(\bv_q\bv_i\bv_j\zeta_l-\bv_q\bv_j\bv_i\zeta_l)+(\bv_q\bv_j\bv_i\zeta_l-\bv_j\bv_q\bv_i\zeta_l)\\
&=&\bv_j\zeta_t(-\bar R_{iq\;\,l}^{\quad\!\! t})+\bv_t\zeta_l(-\bar R_{iq\;\,j}^{\quad\!\!t})+\bv_q\zeta_t(-\bar R_{ij\;\;\,l}^{\quad\!\!t})+\bv_t\zeta_l(-\bar R_{qj\;\,\,i}^{\quad\!\! t})+\bv_i\zeta_t(-\bar R_{qj\;\,\,l}^{\quad\!\! t})\\
&=&b_{ql}(\bv_j\zeta_i-\bv_i\zeta_j)-b_{il}(\bv_q\zeta_j+\bv_j\zeta_q)+b_{jl}(\bv_q\zeta_i+\bv_i\zeta_q)+b_{qj}\bv_i\zeta_l-b_{qi}\bv_j\zeta_l.
\end{eqnarray*}
Similarly,  we get
\begin{eqnarray*}
II_3&:=&\bv_i\bv_q\bv_l\zeta_j-\bv_i\bv_l\bv_q\zeta_j=-b_{qj}\bv_i\zeta_l+b_{lj}\bv_i\zeta_q,\\
\vspace{3mm}
II_4&:=&\bv_j\bv_l\bv_i\zeta_q-\bv_i\bv_l\bv_j\zeta_q\\
&=&b_{ql}(\bv_i\zeta_j-\bv_j\zeta_i)-b_{jq}(\bv_l\zeta_i+\bv_i\zeta_l)+b_{iq}(\bv_l\zeta_j+\bv_j\zeta_l)+b_{li}\bv_j\zeta_q-b_{lj}\bv_i\zeta_q,\\
\vspace{3mm}
II_5&:=&\bv_j\bv_l\bv_q\zeta_i-\bv_j\bv_q\bv_l\zeta_i=-b_{li}\bv_j\zeta_q+b_{qi}\bv_j\zeta_l.
\end{eqnarray*}
By a simplification, we infer by (\ref{y1}) that
\begin{eqnarray*}
&&II_1+(II_2+II_3+II_4+II_5)\\
&=&(g_2-g_1)_{il}b_{jq}-(g_2-g_1)_{lj}b_{iq}+(g_2-g_1)_{iq}b_{jl}-(g_2-g_1)_{jq}b_{il}\\
&&-b_{jq}(\bv_i\zeta_l+\bv_l\zeta_i)+b_{jl}(\bv_q\zeta_i+\bv_i\zeta_q)-b_{il}(\bv_j\zeta_q+\bv_q\zeta_j)+b_{iq}(\bv_l\zeta_j+\bv_j\zeta_l)\\
&=&-2b_{il}(\bv_j\zeta_q+\bv_q\zeta_j)+2b_{jl}(\bv_i\zeta_q+\bv_q\zeta_i)+o(e^{-\frac{2n}{k+1}r}).
\end{eqnarray*}
Hence we conclude
\begin{eqnarray}\label{II_appen.}
II&:=&g_1^{pq}(R_{ij\;\;\; q}^{\quad\! s}(g_2)-R_{ij\;\;\; q}^{\quad\! s}(g_1))\nonumber\\
&=&g_1^{pq}(\frac 12 b^{sl})\left(-2b_{il}(\bv_j\zeta_q+\bv_q\zeta_j)+2b_{jl}(\bv_i\zeta_q+\bv_q\zeta_i)\right)+o(e^{-\frac{2n}{k+1}r})\nonumber\\
&=&b^{pq}(\frac 12 b^{sl})\left(-2b_{il}(\bv_j\zeta_q+\bv_q\zeta_j)+2b_{jl}(\bv_i\zeta_q+\bv_q\zeta_i)\right)+o(e^{-\frac{2n}{k+1}r})\nonumber\\
&=&-\delta_i^s(\bv_j\zeta^p+\bv^p\zeta_j)+\delta_j^s(\bv_i\zeta^p+\bv^p\zeta_i)+o(e^{-\frac{2n}{k+1}r}).
\end{eqnarray}
Finally, the desired result (\ref{aim_appen.}) follows from (\ref{I_appen.}) and (\ref{II_appen.}).
\qed

}

\part {Alexandrov-Fenchel inequalites in $\H^n$}

\setcounter{section}{5}

\section{
Introduction}

The second part of this paper is about  weighted Alexandrov-Fenchel inequalities in $\H^n$, which is used to prove the Penrose inequality
for asymptotically hyperbolic graphs in the last Section.
  This part has its own and independent interest.
For the convenience of  the reader we give an introduction on the Alexandrov-Fenchel inequalities.

The classical isoperimetric inequality
and its generalization, the Alexandrov-Fenchel inequalities,
play an important role in  integral geometry, convex  geometry and differential geometry. Let $\Omega$ be a smooth bounded domain in $\R^n$ with boundary $\Sigma$.
The classical isoperimetric inequality is
\beq\label{eq001}
Area(\Sigma)\geq n^{\frac{n-1}{n}}\omega_{n-1}^{\frac 1n}Vol(\Omega)^{\frac{n-1}{n}}.
\eeq
Equality holds if and only if $\Omega$ is a geodesic ball.
When $n=2$, \eqref{eq001} is
\beq\label{eq002} L^2\ge 4\pi A,\eeq
where $L$ is the length of a  curve $\zeta$ in $\R^2$ and $A$ is the area of the enclosed domain by $\zeta$. The Alexandrov-Fenchel inequalities
(in fact, its special class) are
\begin{equation}\label{eq003}
\int_{\Sigma}\s_kd\mu \ge C_{n-1}^k \omega_{n-1} \left(  \frac 1{C_{n-1}^j}\frac 1{\omega_{n-1}}\int_{\Sigma}\s_{j} d\mu \right)^{\frac {n-1-k}{n-1-j}}, \quad 0\le j< k\le  n-1,
\end{equation}
for any convex hypersurface  $\Sigma$. These inequalities are optimal, in the sense that equality holds if and only if $\Sigma$ is a geodesic sphere.
The Alexandrov-Fenchel inequalities  have been also  extended to certain class of non-convex hypersurfaces. See \cite{ChangWang,GuanLi3, Huisken} for instance.

It is natural to ask if the isoperimetric inequality and the Alexandrov-Fenchel inequalities  hold in the hyperbolic space.
The motivations to study this problem come  from integral geometry and also from the recent study of the Penrose inequality for various mass.
The classical isoperimetric problem between volume and area was solved by Schmidt \cite{Schmidt} 70 years ago. When $n=2$ the corresponding
isoperimetric inequality is
\[ L^2\ge 4\pi A+A^2,\]
where $L$ is the length of a  curve $\zeta$ in $\H^2$ and $A$ is the area of the enclosed domain by $\zeta$.
However, unlike the Euclidean space, for general $n$ there is no such explicit form.

There are many attempts to establish  Alexandrov-Fenchel  type inequalities in the
hyperbolic space $\H^n$. See,  for example, \cite{BM,GS,Schlenker}. In \cite{GS}, Gallego-Solanes proved by using integral geometry
the following interesting inequality for convex domains in $\H^n$,
\be \label{gs}
\int_\Sigma \s_k d\mu  > cC_{n-1}^k |\Sigma|,\ee
where $c=1$ if $k>1$ and $|\Sigma|$ is the area of $\Sigma$. Here $d\mu$ is the area element of the induced metric $\gamma$ from the hyperbolic space and $\sigma_k$ is defined in (\ref{add_sigmak}).
The above inequality \eqref{gs} ($k>1$) is sharp in the sense that the constant $c$ could not be improved.
However, this inequality is far away from being optimal, especially when $|\Sigma|$ is small. One may compare it with the optimal inequalities given below.

There are two classes of the
  Alexandrov-Fenchel inequalities: One is without  a weight $V$ and another with a weight $V$.
  The weight $V$ is an element in $\mathbb{N}_b^1$ considered in Part I. Here as before we fix it
\[
V=\cosh r,
\]
in $\H^n=\R^+\times \S^{n-1}$ with the hyperbolic metric $b=dr^2+\sinh ^2 r g_{\S^{n-1}}.$ Here $r$ is the hyperbolic distance to a fixed point $x_0$.

 The Alexandrov-Fenchel inequalities without weight are closely related to integral geometry in $\H^n$.

 \begin{theo}[\cite{LWX,GWW_AF2, GWW_AFk, WX}]
 \label{AF} Let $1\le k\le n-1$.  Any  horospherical convex hypersurface $\Sigma$ in $\H^n$
satisfies
\begin{equation}\label{eq005}
\int_{\Sigma}\s_{k}d\mu\geq C_{n-1}^k\omega_{n-1}\bigg\{\bigg(\frac{|\Sigma|}{\omega_{n-1}}\bigg)^{\frac{2}{k}}+\bigg(\frac{|\Sigma|}{\omega_{n-1}}\bigg)^{\frac{2}{k}\frac{(n-k-1)}{n-1}}\bigg\}^{\frac {k}{2}}.
\end{equation} Equality holds if and only if $\Sigma$ is a geodesic sphere.
 \end{theo}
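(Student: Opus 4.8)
The plan is to reduce \eqref{eq005} to the scale-separated equivalent form
\[
\left(\frac{\int_{\Sigma}\sigma_k\,d\mu}{C_{n-1}^k\,\omega_{n-1}}\right)^{2/k}\ \geq\ \left(\frac{|\Sigma|}{\omega_{n-1}}\right)^{2/k}+\left(\frac{|\Sigma|}{\omega_{n-1}}\right)^{\frac{2}{k}\cdot\frac{n-1-k}{n-1}},
\]
which one verifies is an equality exactly when $\Sigma$ is a geodesic sphere of radius $r$ (both sides then equal $(\sinh^2 r)^{(n-1-k)/k}(1+\sinh^2 r)$), and then to prove this form by a monotonicity argument along an inverse curvature flow; here one uses that a horospherical convex hypersurface is star-shaped with all $\sigma_j>0$. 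For $k=1$ the flow below is the inverse mean curvature flow and the statement is the classical hyperbolic Minkowski inequality, so the interesting range is $k\geq 2$.

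First I would evolve $\Sigma$ by the inverse curvature flow $\partial_t X=\frac{\sigma_{k-1}}{\sigma_k}\,\nu$ in $\H^n$ (up to a dimensional normalization constant). Since the initial hypersurface is star-shaped with $\sigma_k>0$, this is a parabolic equation for a radial graph over $\S^{n-1}$; by the long-time existence and convergence theory for such flows (Gerhardt, together with the preservation of horospherical convexity under inverse curvature flows), the flow exists for all $t\geq0$, stays smooth and horospherically convex, $|\Sigma_t|\to\infty$, and $\Sigma_t$ becomes totally umbilic in the limit (after a suitable rescaling it converges to a geodesic sphere). Setting
\[
Q(t):=\left(\frac{\int_{\Sigma_t}\sigma_k\,d\mu_t}{C_{n-1}^k\,\omega_{n-1}}\right)^{2/k}-\left(\frac{|\Sigma_t|}{\omega_{n-1}}\right)^{2/k}-\left(\frac{|\Sigma_t|}{\omega_{n-1}}\right)^{\frac{2}{k}\cdot\frac{n-1-k}{n-1}},
\]
the convergence to a round sphere gives $\lim_{t\to\infty}Q(t)=0$, so it suffices to prove that $Q$ is non-increasing along the flow.

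Then I would compute $Q'(t)$ from the first-variation formulas in a space form: $\frac{d}{dt}|\Sigma_t|=\int_{\Sigma_t}\sigma_1\frac{\sigma_{k-1}}{\sigma_k}\,d\mu_t$ and the evolution of $\int_{\Sigma_t}\sigma_k\,d\mu_t$, which in a background of constant curvature $-1$ produces both a $\int\sigma_{k+1}\frac{\sigma_{k-1}}{\sigma_k}$ term and a lower-order $\int\sigma_{k-1}\frac{\sigma_{k-1}}{\sigma_k}$ term (the latter created by the $-1$ sectional curvature via the Gauss equation — and this extra summand is exactly what forces the term $(|\Sigma|/\omega_{n-1})^{2/k}$ into the inequality). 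After substituting and simplifying, the sign of $Q'(t)$ is governed by two pointwise facts, the Newton--MacLaurin inequalities relating $\sigma_1\sigma_{k-1}$ to $\sigma_k$ and relating $\sigma_{k-1}\sigma_{k+1}$ to $\sigma_k^2$ (each an equality only at umbilic points), combined with the Cauchy--Schwarz/H\"older inequality applied to the integrals $\int\frac{\sigma_{k-1}}{\sigma_k}\,d\mu_t$, $\int\sigma_k\,d\mu_t$ and $|\Sigma_t|$, arranged so that the exponents $\frac2k$ and $\frac2k\cdot\frac{n-1-k}{n-1}$ come out correctly. This gives $Q'(t)\leq0$, with equality iff $\Sigma_t$ is a geodesic sphere. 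Integrating, $Q(0)\geq\lim_{t\to\infty}Q(t)=0$, which is \eqref{eq005}; and if equality holds for $\Sigma$ then $Q\equiv0$, forcing equality in all the pointwise inequalities and hence $\Sigma$ totally umbilic, i.e.\ a geodesic sphere.

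The main obstacle is the monotonicity step: one must pin down the precise dimensionless combination $Q$ so that, after inserting the flow's evolution equations, everything collapses exactly onto the two Newton--MacLaurin inequalities with matching constants — aligning the somewhat unusual pair of exponents $\frac2k$ and $\frac2k\cdot\frac{n-1-k}{n-1}$ is the delicate point, and is precisely where the hyperbolic argument departs from its Euclidean counterpart. A secondary technical burden is the flow analysis itself: long-time existence, preservation of horospherical convexity, and $C^\infty$ convergence to a round sphere at a rate sufficient to conclude $Q(t)\to0$ — I would import these from the existing literature rather than reprove them. As an alternative to the rescaling, one could instead run a locally constrained inverse curvature flow that preserves $|\Sigma_t|$ and monotonically decreases $\int_{\Sigma_t}\sigma_k$, converging without rescaling to the geodesic sphere of the same area; the monotonicity computation is essentially the same.
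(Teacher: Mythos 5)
First, a point of reference: the paper does not prove Theorem \ref{AF} itself. It is imported from \cite{LWX,GWW_AF2,GWW_AFk,WX}, and Section 6 only sketches the methods used there (monotonicity of $\int_\Sigma L_k(g)\,d\mu$, the Gauss--Bonnet curvature of the \emph{induced} metric, under Gerhardt's inverse curvature flows for even $k$; a quermassintegral-preserving flow for general $k$ in \cite{WX}). Your strategy --- run an inverse curvature flow, exhibit a monotone dimensionless quantity, evaluate the limit --- belongs to the same family, so the comparison must be with those references.

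The genuine gap is the step ``the convergence to a round sphere gives $\lim_{t\to\infty}Q(t)=0$.'' Under Gerhardt's inverse curvature flows in $\H^n$ the hypersurfaces do \emph{not} become round after rescaling: writing $\Sigma_t$ as a radial graph, $r(t,\theta)-\tfrac{t}{n-1}$ converges to a smooth function $f(\theta)$ on $\S^{n-1}$ that is in general non-constant, so the limit is only conformally round even though the leaves become pointwise umbilic. This is precisely the ``insufficient convergence'' phenomenon of Neves \cite{Neves} quoted in the introduction. Quantitatively, the first two terms of your $Q(t)$, namely $\bigl(\int_{\Sigma_t}\s_k/(C_{n-1}^k\omega_{n-1})\bigr)^{2/k}$ and $(|\Sigma_t|/\omega_{n-1})^{2/k}$, both grow like $e^{2t/k}$ and must be compared to relative precision $e^{-2t/(n-1)}$ before the third term $(|\Sigma_t|/\omega_{n-1})^{\frac{2}{k}\frac{n-k-1}{n-1}}\sim e^{2t/k-2t/(n-1)}$ even enters; the surviving contribution depends on the limit profile $f$ and does not vanish in general. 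What the cited papers actually establish is $\liminf_{t\to\infty}Q(t)\ge 0$, and this requires a sharp Beckner-type Sobolev inequality on $\S^{n-1}$ applied to the limit profile --- exactly the device used in \cite{BHW} and in the asymptotic expansion carried out for the $k=1$ weighted inequality accompanying this paper. The route of \cite{WX} avoids the limit altogether by using a quermassintegral-preserving flow that converges to a genuine geodesic sphere; your closing remark points that way, but then both the monotone quantity and the convergence theory are different and would need to be built from scratch. Two secondary problems: preservation of horospherical convexity under $\partial_tX=\frac{\s_{k-1}}{\s_k}\nu$ is not in Gerhardt's work and is a nontrivial claim in its own right (the paper proves preservation only for the conformal flow $\partial_tX=-V\nu$, Lemma \ref{lem_b1}); and the monotonicity of your particular $Q$ is asserted rather than verified --- in \cite{GWW_AF2,GWW_AFk} the quantity that actually decreases is built from $\int_\Sigma L_k(g)\,d\mu$, a specific linear combination of several curvature integrals $\int_\Sigma\s_{2j}\,d\mu$, not a power of the single integral $\int_\Sigma\s_k\,d\mu$.
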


Inequality \eqref{eq005} was called as a hyperbolic Alexandrov-Fenchel inequality in \cite{GWW_AF2}.
\eqref{eq005} was proved in \cite{LWX} for $k=2$ under a weaker condition that $\Sigma$ is star-shaped and 2-convex,
in \cite{GWW_AF2} for $k=4$ and in \cite{GWW_AFk} for general even $k$.
For $k=1$, \eqref{eq005} was proved in \cite{GWW_AFk} with a help of a result of Cheng and Xu \cite{CZ}.
For general odd integer $k$, inequality \eqref{eq005} was proved very recently in \cite{WX}.
Inequality \eqref{eq005} with odd $k$ will be used in this paper.

The proof of inequality \eqref{eq005}  with even $k$ in  \cite{LWX,GWW_AF2,GWW_AFk} uses various inverse curvature flows studied by Gerhardt \cite{Gerhardt}.
One of the crucial step is to show the monotonicity of a geometric integral under some inverse curvature flow. This geometric integral is
in fact the integral of the Gauss-Bonnet curvature $L_k(g)$ of the induced metric $g$ on the embedded hypersurface  $\Sigma\subset \H^n$
\begin{equation}\label{eq006}
\int_{\Sigma} L_k(g) d\mu (g).
\end{equation}
Hence, by using this method one in fact obtains an optimal Sobolev type inequality for (\ref{eq006}), which then implies the Alexandrov-Fenchel inequality \eqref{eq005}.
The proof of inequality \eqref{eq005}  in \cite{WX} works for all $k$. In the proof, a quermassintegral preserving curvature flow was used.
The quermassintegral will also be used in this paper. For its definition see at the end of this Section. Theorem  \ref{AF} was also proved by Guan-Li under a condition that the hypersurface is star-shaped, together with a technical condition,
by using a modified inverse curvature flow \cite{GuanLi}.

The Alexandrov-Fenchel inequalities with weight for $k=1$ was studied in \cite{BHW} and \cite{dLG}, where they were called
Minkowski type inequalities.
Motivated by the study of the quasi-local mass and the Penrose inequality, Brendle-Hung-Wang \cite{BHW}
established the following Minkowski type inequalities  (i.e., $k=1$)
\begin{equation}\label{eq03}
    \int_{\Sigma}\bigg(V\s_1-(n-1)\langle\bar{\nabla}V,\nu\rangle\bigg)d\mu\geq(n-1)\omega_{n-1}^{\frac 1{n-1}}{|\Sigma|}^{\frac{n-2}{n-1}},
\end{equation}
and  de Lima and Gir\~ao \cite{dLG}  proved the following related inequality
\begin{equation}\label{eq04}
    \int_{\Sigma}V\sigma_1 d\mu\geq (n-1)\omega_{n-1}\left(\left(\frac {|\Sigma|}{\omega_{n-1}}\right)^{\frac{n-2}{n-1}}+\left(\frac {|\Sigma|}{\omega_{n-1}}\right)^{\frac{n}{n-1}}\right),
\end{equation}
 if $\Sigma$ is star-shaped and mean convex (i.e. $\sigma_1>0$).
 The method to prove \eqref{eq03} is still the use of an inverse curvature flow and
 also  works for anti-de Sitter Schwarzschild manifolds. Moreover this method motivates the work of \cite{LWX} and \cite{GWW_AF2, GWW_AFk}.
 Inequality \eqref{eq04} was mentioned in Part I.

 It is natural to ask if general weighted Alexandrov-Fenchel inequalities hold. In this part of the paper, we give an affirmative answer, at least for
 horospherical convex hypersurfaces.

\begin{theo} \label{wAF}
Let  $\Sigma$ be a horospherical convex hypersurface in the hyperbolic space $\H^n$. We have
\begin{equation}\label{eq1_thm}
  \int_{\Sigma}V p_{2k+1} d\mu \ge \omega_{n-1}{\left(\left(\frac{|\Sigma|}{\omega_{n-1}}\right)^{\frac{n}{(k+1)(n-1)}}+\left(\frac{|\Sigma|}{\omega_{n-1}}\right)^{\frac{n-2k-2}{(k+1)(n-1)}} \right)}^{k+1}.
\end{equation}
Equality holds if and only if $\Sigma$ is a centered geodesic sphere in $\H^n$.
\end{theo}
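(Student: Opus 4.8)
\medskip
\noindent\emph{Proof idea (plan).}
It will be convenient to write $p_j:=\sigma_j/C_{n-1}^{\,j}$ and $s:=\big(|\Sigma|/\omega_{n-1}\big)^{1/(n-1)}$, so that the right-hand side of \eqref{eq1_thm} equals $\omega_{n-1}\,s^{\,n-2k-2}(1+s^2)^{k+1}$. On a geodesic sphere of radius $\rho$ centred at $x_0$ one has $V\equiv\cosh\rho$, all principal curvatures equal to $\coth\rho$ and $|\Sigma|=\omega_{n-1}\sinh^{n-1}\rho$, and a direct computation shows that both sides of \eqref{eq1_thm} reduce to $\omega_{n-1}\cosh^{2k+2}\rho\,\sinh^{n-2k-2}\rho$; so only the inequality and its rigidity remain. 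The key observation is that the \emph{unweighted} hyperbolic Alexandrov--Fenchel inequality \eqref{eq005} of Theorem~\ref{AF}, applied with the odd index $2k+1$ (available by Wang--Xia \cite{WX}), already gives $\tfrac1{\omega_{n-1}}\int_\Sigma p_{2k+1}\,d\mu\ge s^{\,n-2k-2}(1+s^2)^{k+\frac12}$, so what remains is to gain one further factor $(1+s^2)^{1/2}$ from the weight $V=\cosh r$. Producing this gain is the role of the ``weighted Minkowski-type inequality'' \eqref{ineq1} of Section~8, and \eqref{eq1_thm} will then follow by combining \eqref{ineq1}, \eqref{eq005}, and the Hsiung--Minkowski identities $\int_\Sigma\big(V\,p_{m-1}-\langle\bar\nabla V,\nu\rangle\,p_m\big)\,d\mu=0$ of Section~7.

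\medskip
\noindent To establish \eqref{ineq1} I would evolve the given horospherical convex $\Sigma$ by the ``conformal'' (locally constrained) curvature flow of Section~8, of the form
\[
\frac{\partial x}{\partial t}=\Big(V\,\frac{\sigma_{2k}}{\sigma_{2k+1}}-\langle\bar\nabla V,\nu\rangle\Big)\nu,
\]
in the spirit of Brendle--Hung--Wang \cite{BHW} and of the quermassintegral-preserving flows of Wang--Xia \cite{WX} and Guan--Li \cite{GuanLi}; here $\nu$ is the outward unit normal, and horospherical convexity keeps $\sigma_{2k},\sigma_{2k+1}>0$ and the $\Sigma_t$ star-shaped. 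The ingredients to be checked are: (i) the flow preserves horospherical convexity and exists for all time; (ii) $\Sigma_t$ converges, after the natural rescaling, to a geodesic sphere centred at $x_0$; (iii) along the flow a suitable \emph{weighted} quermassintegral stays constant while $\int_{\Sigma_t}V\sigma_{2k+1}\,d\mu$ is monotone in the favourable direction. Step (iii) is the computational heart: one differentiates, substitutes the evolution equations for the $\sigma_j$'s and the Minkowski identities of Section~7, and controls the sign with the Newton--MacLaurin inequalities $p_{j+1}p_{j-1}\le p_j^{\,2}$ together with horospherical convexity; integrating the monotone quantity from $\Sigma=\Sigma_0$ to the limiting round sphere and reading off its value there yields \eqref{ineq1}. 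Section~9 then assembles \eqref{eq1_thm} from \eqref{ineq1}, \eqref{eq005}, and the Minkowski formulas --- in effect an induction on $k$ anchored at the case $k=0$, which is precisely de Lima--Gir\~ao's inequality \eqref{eq04}. For rigidity one retraces the equality cases: equality in the Newton--MacLaurin steps forces all principal curvatures of $\Sigma$ to coincide, so $\Sigma$ is a geodesic sphere; equality in \eqref{eq005} (or in the flow monotonicity) fixes its radius; and equality in the weighted step, where $V$ enters, forces the centre to be the fixed point $x_0$.

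\medskip
\noindent The hard part will be step (iii) together with (i)--(ii): one must derive a clean evolution \emph{inequality} for the smallest shifted principal curvature $\lambda_{\min}-1$ and run the maximum principle to keep the flow horospherical convex, and then prove long-time existence and normalized convergence of this constrained flow to a round sphere. Once that analytic package is available, the monotonicity computation and the combinatorics of Section~9 should be comparatively routine. (If this flow proves awkward for (iii), an alternative is a higher-order, weighted Heintze--Karcher-type inequality in $\mathbb{H}^n$ in the spirit of Brendle \cite{Brendle} and of \eqref{eq03}; but since no such statement appears to be available off the shelf, I expect the flow to be the safer route.)
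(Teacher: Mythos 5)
Your top-level assembly of \eqref{eq1_thm} is exactly the paper's: an induction on $k$ anchored at de Lima--Gir\~ao's inequality \eqref{eq04}, with the inductive step obtained by combining the unweighted Alexandrov--Fenchel inequality \eqref{eq005} at the odd index $2k+1$ (your computation that it produces the factor $(1+s^2)^{k+\frac12}$ is correct), the H\"older inequality, and the key weighted formula \eqref{ineq1}; the paper packages this combination via the increasing auxiliary function $f(t)=t-\alpha/t$. Where you diverge is in how \eqref{ineq1} is proved, and there your plan is both different from and considerably riskier than the paper's. You propose an expanding, locally constrained flow with speed $V\sigma_{2k}/\sigma_{2k+1}-\langle\bar\nabla V,\nu\rangle$, which requires long-time existence, preservation of horospherical convexity via a maximum principle for $\lambda_{\min}-1$, and normalized convergence to a round sphere before the monotone quantity can be integrated; as you concede, none of that analytic package is available off the shelf, so as written the key lemma is not actually established. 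The paper instead runs the \emph{contracting} conformal flow \eqref{Vflow}, $\partial_t X=-V\nu$, of Brendle. Along this flow the functional $E=\int_\Sigma\big(Vp_{k+1}-Vp_{k-1}-p_{k+1}/V\big)\,d\mu$ is non-increasing by a purely integral computation (the weighted Minkowski formulas of Propositions \ref{prop1}, \ref{prop2} and \ref{prop1k}, which rest only on the divergence-free Newton tensors and their positive definiteness under convexity), and horospherical convexity is preserved by the short pointwise computation of Lemma \ref{lem_b1}. Crucially, no convergence to a round sphere is needed at the endpoint: the hypersurfaces shrink to a point in finite time, and the monotonicity of the quermassintegrals under set inclusion \eqref{eq_z02} forces every curvature integral, hence $E(\Sigma_t)$, to tend to $0$, giving $E(\Sigma)\ge 0$. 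This is essentially the Brendle-style alternative you mention in parentheses and set aside; it is in fact the safer route, since it replaces the hard parabolic theory of your constrained flow with elementary monotonicity at both ends, and the sharp constant then comes for free from \eqref{eq005} and the induction hypothesis rather than from a delicate asymptotic expansion of the flow.
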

By a centered geodesic sphere in $\H^n$ we mean $\{ r= r_0\}$, a geodesic sphere centered at the fixed point $x_0$, for some constant $r_0>0$.
Here and in the following, for the simplicity of notation we denote by
\begin{equation}\label{pj}
p_{j}=\frac1{C_{n-1}^j}\sigma_j,
\end{equation}
the normalized $j$-th mean curvature.

For the proof of this Theorem, we first need several refined Minkowski integral formulas,
which will be given in Section 7. (In fact, unlike the usual Minowski identity \eqref{Minkowski_Identity} below,
what we have are inequalities. In order to distinguish with
the Minkowski type inequalities  obtained in \cite{BHW} and \cite{dLG}, we call them  Minkowski integral formulas.)
A crucial point is to show  the following inequality
\begin{equation} \label{ineq1_1}
 E(\Sigma):=\int_{\Sigma}  Vp_{k+1} d\mu - \int_\Sigma \left(Vp_{k-1}+\frac{p_{k+1}}{V}\right)d\mu \ge 0.
\end{equation}
To show this inequality we use the following ``conformal flow"
\[ \frac {d}{dt}\Sigma(t)  =-V\nu,\]
which was used first by Brendle \cite{Brendle} to prove his generalized Heintze-Karcher inequality. With the Minkowski integral formulas given in Section 7 below,
we prove that $E$ is non-increasing along this conformal flow. Using the monotonicity of the quermassintegral  we prove that $E(\Sigma(t))$ tends to $0$, when hypersurfaces $\Sigma(t)$ shrink to a point along the conformal flow. Therefore we have $E(\Sigma)\ge 0$ for any horospherical  convex hypersurface.
Inequality \eqref{ineq1_1} enables  us to use an iteration argument as follows.
When $k=0$, \eqref{eq1_thm} is just  \eqref{eq04}, which was proved
in \cite{dLG}. Assume that  \eqref{eq1_thm}  holds for $k-1$, we then use  \eqref{ineq1_1} and
Theorem \ref{AF} to show that \eqref{eq1_thm}  holds for $k$.

It is an interesting question if Theorem \ref{wAF} holds under the weaker condition that the hypersurface is convex, or even that
the hypersurface is just so-called $k$-convex. In our proof of   Theorem \ref{wAF},  the horospherical condition is used just in the proof of Theorem \ref{thm3.2},  which we believe  is unnecessary,  and in the use of Theorem  \ref{AF}. For Theorem \ref{AF} there are evidence
that the convexity should be enough in \cite{GuanLi}.

We end this section by recalling the definition of the quermassintegrals \cite{Santos, Schneider,Solanes}.
For a (geodesically) convex domain $K\subset \H^n$ with boundary $\partial K=\Sigma$, the \textit{quermassintegrals} are defined by
\begin{eqnarray}\label{quer0}
&&W_k(K):=\frac{(n-k)\omega_{k-1}\cdots\omega_0}{n\omega_{n-2}\cdots\omega_{n-k-1}}\int_{\mathcal{L}_k}\chi(L_k\cap K)dL_k, \quad k=1,\cdots,n-1;
\end{eqnarray}
where $\mathcal{L}_k$ is the space of $k$-dimensional totally geodesic subspaces $L_k$ in $\H^n$ and $dL_k$ is the natural (invariant) measure on $\mathcal{L}_k$. The function $\chi$ is given by $\chi(K)=1$  if $K\neq \emptyset$ and $\chi(\emptyset)=0.$
For simplicity, we also use the convention
\begin{eqnarray*}
&& W_0(K)=\hbox{Vol}(K), \quad W_n(K)=\frac{\omega_{n-1}}{n}.
\end{eqnarray*}
Remark that by definition we know
\[ W_1(K)=\frac1n |\p K|.\]
 From integral geometry we know that
the quermassintegrals and the curvature integrals in $\H^n$ do not coincide. Nevertheless they  are closely related (see e.g. \cite{Solanes}, Proposition 7):
\begin{equation}\label{relation1}
\int_\Sigma p_k d \mu = n\left( W_{k+1}(K)+\frac{k}{n-k+1}W_{k-1}(K)\right),\quad k=1,\cdots,n-1,
\end{equation}

\section{Minkowski integral formulas}
Let ${u}=\langle \bar\nabla V,\nu\rangle>0$ be the support function, where $\nu$ is the outer normal vector.  Here and in the following, we denote the connections on  $\H^n$ and $\Sigma$  by $\bar\nabla$ and $\nabla$ respectively.
In the following, for a hypersurface $(\Sigma^{n-1},g)$ embedded in the hyperbolic space $\H^n$, we denote the second fundamental form by $h_{ij}$ and the shape operator $h_i^j:=h_{ik}g^{jk}$. The $k$-th Newton transformation is defined in (\ref{Newtondef}). Before stating the main results, let us collect some basic facts with the weight.
\begin{lemm}
\begin{enumerate} \item[(1)] The gradient vector field $\bar \nabla V$ of the weight function $V$ is a conformal vector field, i.e.,
 \begin{equation}\label{conformal}
 \bar\nabla_X \bar\nabla V= VX,
\end{equation}
  for any vector field $X$.
  \item [(2)] We have the following Minkowski identity with weight $V$
\begin{equation}\label{Minkowski}
\nabla_j(T_k^{ij}\nabla_i V)=-(k+1)u \sigma_{k+1}+(n-(k+1))\sigma_k V.
\end{equation}
\item [(3)] There is a relation between the weight $V$ and the support function $u$
\begin{equation}\label{fact}
V^2=1+{u}^2+|\nabla V|^2.
\end{equation}
\end{enumerate}
\end{lemm}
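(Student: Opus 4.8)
The plan is to deduce all three formulas from the single structural fact that the weight $V=\cosh r$ lies in $\mathbb{N}_b$, that is, $\bar\nabla^2 V=Vb$; granting \eqref{conformal}, both \eqref{Minkowski} and \eqref{fact} become routine consequences of splitting the ambient gradient and Hessian of $V$ along the hypersurface $\Sigma$, combined with the divergence-freeness of the Newton transformations in a space form.

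For (1), the quickest route is simply to note that $V=V_{(0)}$ is exactly the distinguished element of $\mathbb{N}_b$ singled out in \eqref{Nb}, so that \eqref{conformal} holds by definition. Alternatively one checks it by hand in geodesic polar coordinates $b=dr^2+\sinh^2 r\,d\Theta^2$ centered at $x_0$: the geodesic spheres $\{r=\mathrm{const}\}$ have second fundamental form $\coth r$ times their induced metric, so $\bar\nabla^2 r=\coth r\,(b-dr\otimes dr)$ off $x_0$, and hence $\bar\nabla^2 V=V''\,dr\otimes dr+V'\,\bar\nabla^2 r=\cosh r\,dr\otimes dr+\sinh r\coth r\,(b-dr\otimes dr)=\cosh r\,b=Vb$.

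For (3), I would decompose $\bar\nabla V$ along $\Sigma$ into tangential and normal parts, $\bar\nabla V=\nabla V+u\,\nu$ with $u=\langle\bar\nabla V,\nu\rangle$, which gives $|\bar\nabla V|^2=|\nabla V|^2+u^2$. On the other hand $\bar\nabla V=\sinh r\,\partial_r$, so $|\bar\nabla V|^2=\sinh^2 r=\cosh^2 r-1=V^2-1$; eliminating $|\bar\nabla V|^2$ between the two relations produces \eqref{fact}.

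For (2), the starting point is that the $k$-th Newton transformation $T_k$ of a hypersurface in the constant-curvature space $\H^n$ is divergence-free (the Codazzi tensor $\nabla_l h_{ij}-\nabla_j h_{il}$ vanishes because the ambient curvature is parallel; this is the same fact used in Section~4, cf. \cite{ALM}), whence $\nabla_j(T_k^{ij}\nabla_i V)=T_k^{ij}\nabla_j\nabla_i V$. Next one relates the intrinsic Hessian of the restriction $V|_\Sigma$ to the ambient one through the Gauss formula, $\nabla_i\nabla_j V=(\bar\nabla^2 V)_{ij}-u\,h_{ij}$, which by (1) becomes $\nabla_i\nabla_j V=V\gamma_{ij}-u\,h_{ij}$, with $\gamma$ the induced metric. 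Contracting against the symmetric tensor $T_k^{ij}$ and using, from \eqref{sigmak} together with the contraction identity for the generalized Kronecker delta, the two traces $T_k^{ij}\gamma_{ij}=\tr T_k=(n-1-k)\sigma_k$ and $T_k^{ij}h_{ij}=(k+1)\sigma_{k+1}$, one arrives at $\nabla_j(T_k^{ij}\nabla_i V)=(n-(k+1))\sigma_k V-(k+1)u\,\sigma_{k+1}$, which is \eqref{Minkowski}. The one point genuinely requiring care, and the main (minor) obstacle, is to fix the orientation of $\nu$ and the sign convention for the second fundamental form $h_{ij}$ consistently with the rest of the paper so that the $\sigma_{k+1}$-term comes out with a minus sign, and to keep the dimension $n-1$ of $\Sigma$ straight in $\tr T_k=(n-1-k)\sigma_k$.
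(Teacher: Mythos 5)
Your proposal is correct and follows essentially the same route as the paper: the paper declares (1) well-known, obtains (2) ``by direct calculation with the help of \eqref{conformal}'' (citing (8.4) of \cite{ALM}), and proves (3) exactly via the orthogonal splitting $\bar\nabla V=\nabla V+u\nu$ together with $|\bar\nabla V|^2=\sinh^2 r=V^2-1$. You merely supply the standard details the paper omits, namely the polar-coordinate verification of $\bar\nabla^2V=Vb$ and, for (2), the divergence-freeness of $T_k$ plus the traces $T_k^{ij}\gamma_{ij}=(n-1-k)\sigma_k$ and $T_k^{ij}h_{ij}=(k+1)\sigma_{k+1}$, all of which check out.
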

\begin{proof}
\eqref{conformal} is well-known.
For (\ref{Minkowski}), one can calculate it directly with the help of (\ref{conformal}) or see (8.4) in \cite{ALM}.
To prove (\ref{fact}),  first note that the  orthogonal composition $$\bar\nabla V=\nabla V+u\nu$$ implies
 $$|\bar\nabla V|^2=|\nabla V|^2+u^2.$$
Recalling $V=\cosh r$ and the simple fact $|\bar\nabla r|=1$,  we  have
$$
V^2-1=\sinh^2 r=|\bar\nabla V|^2.$$
Thus we complete the proof.
\end{proof}
In view of (\ref{pj}), it follows from \eqref{Minkowski} that we have the following well-known Minkowski integral formula between $p_k$ and $p_{k+1}$,
\begin{equation}\label{Minkowski_Identity}
\int_{\Sigma} u p_{k+1}d\mu= \int_{\Sigma}V p_k d\mu.
\end{equation}
This is the classical Minkowski integral identity in $\H^n$.

In order to prove our optimal inequalities, we need to generalize the Minkowski  type  identity between $p_k$ and $p_{k+1}$,
which are now only inequalities. See \eqref{eq2}, \eqref{eq4}, Proposition  \ref{prop1k}  and \eqref{ineq1} below.
To distinguish between such inequalities  and the Minkowski type inequality (see \eqref{eq03} and \eqref{eq04}),
we call them   {\it Minkowski  integral formulas},  between integrals involving $\sigma_k$ and $\sigma_{k+1}$.

\begin{prop}\label{prop1}
Let $\Sigma$ be a convex hypersurface in the hyperbolic space $\H^n$ and for any integer $1\leq k\leq n-1$. We have
\begin{equation}\label{eq1}
\int_{\Sigma}{u}Vp_kd\mu= \int_{\Sigma}V^2p_{k-1}d\mu+\frac{1}{C_{n-1}^k}\int_{\Sigma}\frac{1}{k}(T_{k-1})^{ij}\nabla_iV \nabla_j Vd\mu.
\end{equation}
Moreover, we have
\begin{equation}\label{eq2}
\int_{\Sigma}{u}Vp_kd\mu\geq \int_{\Sigma}V^2p_{k-1}d\mu.
\end{equation}
Equality holds if and only if $\Sigma$ is a centered geodesic sphere in $\H^n$.

\end{prop}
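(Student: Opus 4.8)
The plan is to obtain \eqref{eq1} directly from the weighted Minkowski identity \eqref{Minkowski} by a single integration by parts on the closed hypersurface $\Sigma$, and then to read off \eqref{eq2} together with its equality case from the sign of the leftover term.

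First I would apply \eqref{Minkowski} with the index $k$ replaced by $k-1$, that is,
\[
\nabla_j\bigl(T_{k-1}^{ij}\nabla_i V\bigr)=-k\,u\,\sigma_k+(n-k)\,\sigma_{k-1}\,V,
\]
multiply through by $V$, and integrate over $\Sigma$. Since $\Sigma$ is closed and $T_{k-1}$ is symmetric, the divergence theorem gives $\int_\Sigma V\,\nabla_j(T_{k-1}^{ij}\nabla_i V)\,d\mu=-\int_\Sigma T_{k-1}^{ij}\nabla_i V\,\nabla_j V\,d\mu$, so
\[
k\int_\Sigma u\,\sigma_k\,V\,d\mu=(n-k)\int_\Sigma \sigma_{k-1}\,V^2\,d\mu+\int_\Sigma T_{k-1}^{ij}\nabla_i V\,\nabla_j V\,d\mu.
\]
Rewriting $\sigma_j=C_{n-1}^{j}p_j$ and using the elementary identity $(n-k)C_{n-1}^{k-1}=k\,C_{n-1}^{k}$, then dividing by $k\,C_{n-1}^{k}$, produces exactly \eqref{eq1}; the only computation involved is that binomial identity.

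For \eqref{eq2} I would invoke the positivity of the Newton transformation: convexity of $\Sigma$ means its shape operator is positive definite, hence $\lambda(h)\in\Gamma_k^+$ and $T_{k-1}(h)$ is a positive definite bilinear form, so $T_{k-1}^{ij}\nabla_i V\,\nabla_j V\ge 0$ pointwise and \eqref{eq2} follows from \eqref{eq1}. Equality in \eqref{eq2} forces $T_{k-1}^{ij}\nabla_i V\,\nabla_j V\equiv 0$ on $\Sigma$; positive definiteness of $T_{k-1}$ then forces $\nabla V\equiv 0$, so $V=\cosh r$ is constant along $\Sigma$ and $\Sigma=\{r=r_0\}$ is a centered geodesic sphere. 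Conversely, on such a sphere $\nabla V=0$ and $u,V,p_k,p_{k-1}$ are constants satisfying $uVp_k=V^2p_{k-1}$ by the classical Minkowski identity \eqref{Minkowski_Identity}, so equality holds.

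The argument presents no genuine difficulty; the one place that needs a little care is the equality statement, where one must use that $T_{k-1}$ is strictly (not merely semi-) positive definite on a convex hypersurface so that the vanishing of the quadratic form really yields $\nabla V\equiv0$ — this is precisely the point at which full convexity, rather than a weaker $k$-convexity, is used.
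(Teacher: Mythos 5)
Your proposal is correct and follows essentially the same route as the paper: shift the weighted Minkowski identity \eqref{Minkowski} down by one index, multiply by $V$, integrate by parts using the symmetry of $T_{k-1}$ and the combinatorial identity $(n-k)C_{n-1}^{k-1}=k\,C_{n-1}^{k}$ to get \eqref{eq1}, and then invoke the positive definiteness of $T_{k-1}$ on a convex hypersurface to deduce \eqref{eq2} and force $\nabla V\equiv 0$ in the equality case. Your remarks on the converse direction and on where strict (rather than semi-) definiteness is needed are accurate refinements of the paper's brief treatment, not deviations from it.
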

\begin{proof}
In view of (\ref{Minkowski}), we have
\begin{equation}\label{eq3}
\frac{1}{kC_{n-1}^k}\nabla_j(T_{k-1}^{ij}\nabla_i V)=-{u}p_k+p_{k-1}V.
\end{equation}
Multiplying the above  equation by the function $V$ and integrating by parts, we obtain the desired result (\ref{eq1}).
The convexity of $\Sigma$ implies that   $(T_{k-1})^{ij}$ is positively definite (for the proof see \cite{Guan} for instance), namely,
$$(T_{k-1})^{ij}\nabla_i V \nabla_j V\geq 0.$$
Hence (\ref{eq2}) holds. When  equality  in \eqref{eq2} holds, we have $\nabla V=0$ which implies that $\Sigma$ is a centered geodesic sphere in $\H^n.$
\end{proof}

\begin{prop}\label{prop2}
Let  $\Sigma$ be a  convex hypersurface in the hyperbolic space $\H^n$ and for any integer $1\leq k\leq n-1$. We have
\begin{equation}\label{eq4}
\int_{\Sigma}({u}^2p_k-{u}Vp_{k-1})d\mu=\frac{1}{kC_{n-1}^k}\int_{\Sigma} (T_{k-1})^{ij}\nabla_i V\nabla_j {u}d\mu \ge 0.
\end{equation}
Equality holds if and only if $\Sigma$ is a centered geodesic sphere in $\H^n$.
\end{prop}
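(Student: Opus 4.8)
The plan is to rerun the argument of Proposition~\ref{prop1}, but this time pairing the divergence identity (\ref{eq3}) with the support function $u$ in place of the weight $V$. Starting from
\[
\frac{1}{kC_{n-1}^k}\nabla_j\big(T_{k-1}^{ij}\nabla_i V\big)=-u\,p_k+p_{k-1}V,
\]
I would multiply by $u$, integrate over the closed hypersurface $\Sigma$, and integrate by parts; since $\Sigma$ has no boundary the total-divergence term drops, leaving
\[
\int_{\Sigma}\big(u^2p_k-uVp_{k-1}\big)\,d\mu=\frac{1}{kC_{n-1}^k}\int_{\Sigma}T_{k-1}^{ij}\,\nabla_i V\,\nabla_j u\,d\mu,
\]
which is exactly the identity part of (\ref{eq4}).

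It then remains to show the right-hand integrand is pointwise nonnegative. The single computational ingredient is a formula for the tangential gradient of $u=\langle\bar\nabla V,\nu\rangle$: differentiating along a tangent frame $\{e_i\}$ of $\Sigma$ and using the conformal identity (\ref{conformal}), $\bar\nabla_{e_i}\bar\nabla V=Ve_i$ (whose component along $\nu$ vanishes), together with the Weingarten relation $\bar\nabla_{e_i}\nu=h_i^{\,j}e_j$, one obtains
\[
\nabla_i u=h_i^{\,j}\,\nabla_j V .
\]
Substituting this turns the integrand into $(T_{k-1}\circ h)^{ij}\,\nabla_i V\,\nabla_j V$, the quadratic form of the composition of the Newton operator $T_{k-1}$ with the shape operator $h$. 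Since $T_{k-1}$ is a polynomial in $h$, the two operators commute and can be diagonalized simultaneously; on a convex hypersurface both are positive definite (the eigenvalues of $T_{k-1}$ are the values of $\sigma_{k-1}$ on the remaining principal curvatures, which are positive because $1\le k\le n-1$, cf.\ \cite{Guan}), so $T_{k-1}\circ h$ is positive definite and the integrand is $\ge 0$. Integrating proves (\ref{eq4}).

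For the rigidity statement, equality forces $(T_{k-1}\circ h)^{ij}\nabla_iV\,\nabla_jV\equiv 0$, and positive definiteness of $T_{k-1}\circ h$ then gives $\nabla V\equiv 0$ on $\Sigma$; since $V=\cosh r$, this means $r$ is constant on $\Sigma$, i.e.\ $\Sigma$ is a geodesic sphere centered at $x_0$, and conversely such a sphere obviously realizes equality. I do not anticipate a genuine obstacle here: the whole proof is a short variant of Proposition~\ref{prop1}. The only points deserving a little care are the sign bookkeeping in deriving $\nabla_i u=h_i^{\,j}\nabla_j V$ (consistent with the paper's second-fundamental-form convention) and the remark that it is $T_{k-1}\circ h$, rather than $T_{k-1}$ alone, whose positive definiteness on a convex hypersurface yields both the inequality and the equality case.
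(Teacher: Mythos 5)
Your proposal is correct and follows the paper's own proof essentially verbatim: multiply the Minkowski identity \eqref{eq3} by $u$, integrate by parts, use $\nabla_i u = h_i^{\,j}\nabla_j V$ (from the conformal identity \eqref{conformal} and the Weingarten relation), and conclude positivity from the fact that $T_{k-1}$ and the shape operator commute and are both positive definite on a convex hypersurface, with the equality case forcing $\nabla V\equiv 0$. No gaps.
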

\begin{proof}
Multiplying (\ref{eq3}) by the support function ${u}$ and integrating by parts, we have
\begin{equation}\label{eq4'}
\int_{\Sigma}({u}^2p_k-{u}Vp_{k-1})d\mu=\int_{\Sigma}\frac{1}{kC_{n-1}^k} (T_{k-1})^{ij}\nabla_i V\nabla_j {u}d\mu.
\end{equation}
Next we compute
$$
\nabla_i{u}=\bar\nabla_i {u}=\langle \bar\nabla_i\bar\nabla V, \nu\rangle + \langle \bar\nabla V,  \bar\nabla_i \nu\rangle=\langle \nabla V, \nabla_i \nu\rangle=\nabla_l V h^{l}_i,
$$
where $h_{ij}$ is the second fundamental form of $\Sigma$ in $\H^n$.
Here we have used the fact  \eqref{conformal} that the vector filed $ \bar\nabla V$ is conformal,
and $ \bar\nabla_i \nu$ has only tangential part and thus the tangential part of $ \bar\nabla V$ is $\nabla V$. Going back into (\ref{eq4'}), we obtain
$$
\int_{\Sigma} ({u}^2 p_kd\mu-{u} V p_{k-1} )d\mu=\frac{1}{C_{n-1}^k}\int_{\Sigma} \frac{1}{k}(T_{k-1})^{ij}  h^{l}_j \nabla_lV\nabla_i Vd\mu.
$$
We note that  $(T_{k-1})^{ij}$ and ${h^{li}}$ are both positive-definite and the multiplication is commutative.
 Thus, by a simple fact of linear algebra, we know that the product of matrices  $(T_{k-1})^{ij} h^{l}_i$ is still positive-definite, and hence
\begin{equation}\label{eq5}
\int_{\Sigma}(T_{k-1})^{ij} h^{l}_j\nabla_lV\nabla_iVd\mu\geq 0.
\end{equation}
 As a consequence, we have
$$
\int_{\Sigma} {u}^2 p_kd\mu\ge \int_{\Sigma} {u}V p_{k-1}d\mu.
$$
When  equality holds, we have $\nabla V\equiv 0$.  Hence, $\Sigma$ is  a centered geodesic sphere.
\end{proof}

For the later use, we need the following inequalities.

\begin{prop}\label{prop1k}
Let  $\Sigma$ be a convex hypersurface in the hyperbolic space $\H^n$ and for any integer $1\leq k\leq n-2$. We have
\begin{equation}\label{eq1_prop1k}
\ds \int_{\Sigma} V^2p_{k+1}d\mu\ge \int_{\Sigma}V^2p_{k-1}d\mu+\int_{\Sigma} p_{k+1}d\mu,
\end{equation}
and
\begin{equation}\label{eq2_prop1k}
\ds \int_{\Sigma} uVp_{k+1}d\mu\ge \int_{\Sigma} uVp_{k-1} d\mu+\int_{\Sigma} \frac{u}{V}p_{k+1}d\mu.
\end{equation}
Equality in the above inequalities holds if and only if $\Sigma$ is a centered geodesic sphere in $\H^n$.
\end{prop}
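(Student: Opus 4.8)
\smallskip
\noindent\textbf{Proof proposal.} The plan is to deduce both \eqref{eq1_prop1k} and \eqref{eq2_prop1k} from the Minkowski integral formulas of Propositions \ref{prop1} and \ref{prop2}, combined with the pointwise identity $V^2=1+u^2+|\nabla V|^2$ of \eqref{fact} and the positivity $p_j>0$ on a convex hypersurface. No curvature flow is needed here; everything is integration by parts plus elementary comparisons.

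For \eqref{eq1_prop1k}, I would first use \eqref{fact} to rewrite
\[
\int_{\Sigma}V^2 p_{k+1}\,d\mu-\int_{\Sigma}p_{k+1}\,d\mu
=\int_{\Sigma}(u^2+|\nabla V|^2)\,p_{k+1}\,d\mu\ \ge\ \int_{\Sigma}u^2 p_{k+1}\,d\mu,
\]
the inequality holding because $p_{k+1}>0$. Then I would apply \eqref{eq4} of Proposition \ref{prop2} with $k$ replaced by $k+1$, which gives $\int_{\Sigma}u^2 p_{k+1}\,d\mu\ge\int_{\Sigma}uVp_k\,d\mu$, followed by \eqref{eq2} of Proposition \ref{prop1} with index $k$, which gives $\int_{\Sigma}uVp_k\,d\mu\ge\int_{\Sigma}V^2 p_{k-1}\,d\mu$. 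Chaining the three estimates yields \eqref{eq1_prop1k}. Equality forces $\int_{\Sigma}|\nabla V|^2 p_{k+1}\,d\mu=0$, hence $\nabla V\equiv0$ on $\Sigma$, so $\Sigma$ is a level set of $V=\cosh r$, i.e. a geodesic sphere centered at $x_0$; and conversely equality holds on any such sphere, which is a one-line check using $p_j=\coth^j r_0$.

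For \eqref{eq2_prop1k}, the new ingredient is the one-sided bound $\int_{\Sigma}\frac{u}{V}p_{k+1}\,d\mu\le\int_{\Sigma}p_k\,d\mu$, which I would obtain by taking \eqref{eq3} with $k$ replaced by $k+1$, namely $\frac{1}{(k+1)C_{n-1}^{k+1}}\nabla_j(T_k^{ij}\nabla_i V)=-up_{k+1}+Vp_k$, multiplying by $1/V$ and integrating over the closed hypersurface $\Sigma$. Since $\nabla_j(1/V)=-V^{-2}\nabla_j V$, the integration by parts leaves the term $\frac{1}{(k+1)C_{n-1}^{k+1}}\int_{\Sigma}V^{-2}T_k^{ij}\nabla_iV\,\nabla_jV\,d\mu\ge0$, because $T_k$ is positive semidefinite on a convex hypersurface (the same fact used in the proof of Proposition \ref{prop1}); hence $\int_{\Sigma}(-\frac uV p_{k+1}+p_k)\,d\mu\ge0$. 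With this in hand I would estimate, all integrals being over $\Sigma$,
\[
\int uVp_{k-1}\,d\mu+\int \frac{u}{V}p_{k+1}\,d\mu\ \le\ \int u^2 p_k\,d\mu+\int p_k\,d\mu=\int(1+u^2)p_k\,d\mu\ \le\ \int V^2 p_k\,d\mu\ \le\ \int uVp_{k+1}\,d\mu,
\]
using \eqref{eq4} of Proposition \ref{prop2} with index $k$ for the first inequality, the bound just proved for the second, $1+u^2\le V^2$ from \eqref{fact} for the third, and \eqref{eq2} of Proposition \ref{prop1} with index $k+1$ for the fourth. This is \eqref{eq2_prop1k}, and equality once more forces $\nabla V\equiv0$.

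The step I expect to be the real obstacle is \eqref{eq2_prop1k}: it is not simply the $uV$-weighted analogue of \eqref{eq1_prop1k}, so merely discarding $|\nabla V|^2$ no longer closes the argument and one must supply the auxiliary upper bound $\int\frac uV p_{k+1}\le\int p_k$. The delicate point there is the sign of the divergence term, which comes out favorable precisely because the minus sign in $d(1/V)=-V^{-2}dV$ converts it into a nonnegative quadratic form in $\nabla V$; one should also verify that the range $1\le k\le n-2$ keeps \eqref{eq3} and Propositions \ref{prop1}, \ref{prop2} at indices $k$ and $k+1$ within their stated validity.
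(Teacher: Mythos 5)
Your proposal is correct and follows essentially the same route as the paper: both arguments decompose $V^2=1+u^2+|\nabla V|^2$, chain the inequalities of Propositions \ref{prop1} and \ref{prop2} at indices $k$ and $k+1$, and obtain the auxiliary bound $\int_\Sigma \frac{u}{V}p_{k+1}\,d\mu\le\int_\Sigma p_k\,d\mu$ by multiplying the Minkowski identity \eqref{eq3} (shifted to index $k+1$) by $1/V$ and integrating by parts, using the positive-definiteness of $T_k$. The only difference is cosmetic (you run the chain for \eqref{eq2_prop1k} in the reverse order), and your attention to the index range and to the sign of the divergence term matches the paper's computation \eqref{eq2k}.
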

\begin{proof}
Applying (\ref{fact}), we obtain
$$
\int_{\Sigma} V^2p_{k+1}d\mu= \int_{\Sigma} u^2p_{k+1}d\mu+  \int_{\Sigma} p_{k+1}d\mu+ \int_{\Sigma} |\nabla V|^2 p_{k+1}d\mu.
$$
 From Proposition \ref{prop1} and Proposition \ref{prop2},
\[\begin{array}{rcl}
\ds
\int_{\Sigma} u^2p_{k+1}d\mu \geq  \ds   \int_{\Sigma} uV p_{k}d\mu\geq  \ds   \int_{\Sigma} V^2 p_{k-1}d\mu.
\end{array}\]
Therefore, the desired inequality (\ref{eq1_prop1k}) follows. To prove (\ref{eq2_prop1k}), using (\ref{fact}) and Proposition \ref{prop1}, Proposition \ref{prop2} again, we have
\[\begin{array}{rcl}
\ds
\int_{\Sigma} uV p_{k+1} d\mu\ge \int_{\Sigma} V^2 p_{k}=  \int_{\Sigma} p_k d\mu +   \int_{\Sigma} u^2 p_{k}d\mu+ \int_{\Sigma} |\nabla V|^2 p_{k}d\mu\ge  \int_{\Sigma} p_k d\mu +   \int_{\Sigma} uV p_{k-1}d\mu .
\end{array}\]
Multiplying (\ref{eq3}) by the function $\frac{1}{V}$ and integrating by parts, we have
\begin{eqnarray}\label{eq2k}
\int_{\Sigma}    p_{k}d\mu&=&\int_{\Sigma}\frac{u}{V} p_{k+1}d\mu+\frac{1}{(k+1)C_{n-1}^{k+1}}\int_{\Sigma} \frac{1}{V^2}(T_{k})^{ij}\nabla_iV\nabla_j Vd\mu\nonumber\\
&\ge&\int_{\Sigma}\frac{u}{V} p_{k+1}d\mu.
\end{eqnarray}
Here in the last inequality, we have used the fact that $(T_{k})^{ij}$ is positive-definite.
Hence we prove (\ref{eq2_prop1k}).
The equality cases follow readily.
\end{proof}

\section{A crucial  Minkowski integral formula }

In this section, we consider the following functional
\begin{equation}\label{generalE}
E:=\int_{\Sigma}\bigg(Vp_{k+1}-Vp_{k-1}-\frac{p_{k+1}}{V}\bigg)d\mu.
\end{equation}
Before discussing further, let us recall some basic facts of the general evolution equations.
Precisely, consider a one-parameter family $X(t,.):\Sigma^{n-1}\rightarrow \mathbb{H}^n,\, t\in[0,\epsilon)$ of closed, isometrically embedded hypersurfaces evolving by
\begin{equation}\label{flow}
\frac{\partial X}{\partial t}=F\nu,
\end{equation}
where $\nu$ is the outward unit normal to $\Sigma_t=X(t,.)$ and $F$ is a general speed function. For the convenience of the reader, we collect some evolution formulas in the following lemma.
\begin{lemm}\label{evo.lemm1}
Along  flow (\ref{flow}), we have
\begin{enumerate}[(1)]
\item $\frac{\partial}{\partial t}d\mu=F\sigma_1d\mu,$
\vspace{2mm}
\item $\frac{\partial V}{\partial t}={u} F,$
\vspace{2mm}
\item $\frac{\partial\sigma_k}{\partial t}=-T_{k-1}^{ij}\nabla_i\nabla_j F-F(\sigma_1\sigma_k-(k+1)\sigma_{k+1})+(n-k)F\sigma_{k-1},$
\vspace{2mm}
\item $\ds\frac{\partial}{\partial t}\int_{\Sigma}Vp_k d\mu=\int_{\Sigma}\left((k+1){u} p_k +(n-k-1)V p_{k+1}\right)Fd\mu,$
\vspace{3mm}
\item  For $l\geq 0$, we have \[
\begin{array}{rcl} \ds\vs
\frac{\partial}{\partial t}\int_{\Sigma}\frac{p_k}{V^l} d\mu& =&\!\! -\ds l(k\!+\!1) \int_\Sigma  \frac {p_k}{V^{l+1}} uFd\mu- \frac{l(l\!+\!1)}{C_{n-1}^k} \int_\Sigma
\frac{(\nabla_iV)(\nabla_j V)}{V^{l+2}} T^{ij}_{k-1} Fd\mu\\
&& \ds +k(l\!+\!1)  \int_\Sigma  \frac  {p_{k-1} } {V^{l}}Fd\mu+ \int _{\Sigma} (n-k-1)\frac {p_{k+1}} {V^{l}}Fd\mu.
\end{array}\]
In particular, under flow $F=-V$ we have the following simple form
$$\frac{\partial}{\partial t}\int_{\Sigma}\frac{p_k}{V^l}d\mu=\int_{\Sigma}\frac{(l-k)u p_{k}-(n-k-1)Vp_{k+1}}{V^l}d\mu.$$
\end{enumerate}
\end{lemm}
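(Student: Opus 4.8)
The plan is to compute the first variation of each quantity under the normal flow \eqref{flow} and then assemble the pieces, using that in the space form $\H^n$ the Newton tensors are divergence free, $\nabla_i T_{k-1}^{ij}=0$, so that the integrations by parts are clean. First I record the elementary variations. Under $\partial X/\partial t=F\nu$ one has $\partial_t g_{ij}=2Fh_{ij}$, hence $\partial_t(d\mu)=Fg^{ij}h_{ij}\,d\mu=F\sigma_1\,d\mu$, which is (1); and $\partial_t V=\langle\bar\nabla V,\partial_t X\rangle=\langle\bar\nabla V,F\nu\rangle=Fu$, which is (2). For (3) I would invoke the standard evolution of the shape operator in an ambient manifold of constant curvature $-1$, namely $\partial_t h_i^j=-\nabla_i\nabla^j F-F(h^2)_i^j+F\delta_i^j$, the last term being the contribution of the ambient curvature tensor of $\H^n$ (here $\bar R_{\nu i\nu j}=-g_{ij}$); then $\partial_t\sigma_k=(T_{k-1})_j^i\,\partial_t h_i^j$, and the algebraic identities $(T_{k-1})_j^i(h^2)_i^j=\sigma_1\sigma_k-(k+1)\sigma_{k+1}$ (from Newton's recursion $hT_{k-1}=\sigma_k I-T_k$) and $\mathrm{tr}\,T_{k-1}=(n-k)\sigma_{k-1}$ give (3).

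For (4), differentiate $\int_\Sigma Vp_k\,d\mu$ and substitute (1), (2), (3), recalling $p_k=\sigma_k/C_{n-1}^k$. The $\sigma_1\sigma_k$ contributions coming from $\partial_t\sigma_k$ and from $\partial_t(d\mu)$ cancel. The remaining Laplace-type term $-\int_\Sigma V\,T_{k-1}^{ij}\nabla_i\nabla_j F\,d\mu$ is integrated by parts twice; since $\nabla_iT_{k-1}^{ij}=0$ it equals $-\int_\Sigma(\nabla_i\nabla_j V)\,T_{k-1}^{ij}\,F\,d\mu$, and then the Hessian formula $\nabla_i\nabla_j V=Vg_{ij}-uh_{ij}$ on $\Sigma$ (equivalently the Minkowski identity \eqref{Minkowski}) rewrites it in terms of $\sigma_{k-1}$ and $\sigma_k$. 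Collecting everything and converting $\sigma_{k\pm1}/C_{n-1}^k$ into multiples of $p_{k\pm1}$ via $C_{n-1}^{k-1}/C_{n-1}^k=k/(n-k)$ and $C_{n-1}^{k+1}/C_{n-1}^k=(n-1-k)/(k+1)$ produces exactly $\int_\Sigma\big((k+1)up_k+(n-k-1)Vp_{k+1}\big)F\,d\mu$.

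Statement (5) is the same computation carried out with the extra weight $V^{-l}$: one uses $\partial_t(V^{-l})=-lV^{-l-1}uF$, and in the double integration by parts of $-\tfrac1{C_{n-1}^k}\int_\Sigma V^{-l}T_{k-1}^{ij}\nabla_i\nabla_j F\,d\mu$ the identity $\nabla_j(V^{-l}T_{k-1}^{ij})=-lV^{-l-1}(\nabla_jV)T_{k-1}^{ij}$ produces both the gradient term $-\tfrac{l(l+1)}{C_{n-1}^k}\int_\Sigma V^{-l-2}(\nabla_iV)(\nabla_jV)T_{k-1}^{ij}F\,d\mu$ and a term $+\tfrac{l}{C_{n-1}^k}\int_\Sigma V^{-l-1}(\nabla_i\nabla_j V)T_{k-1}^{ij}F\,d\mu$; the bookkeeping of (4) then yields the displayed general identity. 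For the special case $F=-V$ I would separately multiply \eqref{eq3} by $V^{-l}$ and integrate by parts, obtaining the $l$-parameter analogue of \eqref{eq2k}, namely $\tfrac{l}{C_{n-1}^k}\int_\Sigma V^{-l-1}(\nabla_iV)(\nabla_jV)T_{k-1}^{ij}\,d\mu=-k\int_\Sigma V^{-l}(up_k-Vp_{k-1})\,d\mu$; substituting this into the general identity with $F=-V$ makes the gradient term and the $p_{k-1}$ term collapse, leaving precisely $\int_\Sigma V^{-l}\big((l-k)up_k-(n-k-1)Vp_{k+1}\big)\,d\mu$.

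The expected main obstacle is nothing conceptual but purely the bookkeeping: carrying out the two integrations by parts without sign errors and keeping track of the binomial normalizations relating $\sigma_j$ and $p_j$, together with pinning down the sign of the ambient-curvature correction in the evolution equation for $h_i^j$ in $\H^n$.
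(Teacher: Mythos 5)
Your proposal is correct and follows essentially the same route as the paper: (1)--(3) are the standard first-variation formulas (which the paper simply cites from Huisken and Reilly), and (4)--(5) are obtained exactly as in the paper by a double integration by parts using $\nabla_iT_{k-1}^{ij}=0$ together with the weighted Minkowski identity \eqref{Minkowski} (equivalently $\nabla_i\nabla_jV=Vg_{ij}-uh_{ij}$), followed by the binomial renormalization; your reduction of the $F=-V$ case via the $V^{-l}$-weighted version of \eqref{eq3} reproduces the paper's \eqref{eq2k}-type identity and checks out.
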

\begin{proof}
(1) and (2) are included in \cite{Huisken1}, and (3) follows from \cite{Reilly}.
Here we provide a proof for (4).
In view of (\ref{Minkowski}), we have along flow (\ref{flow}) that
\begin{eqnarray}
\frac{\partial}{\partial t}\int_{\Sigma}V\sigma_k d\mu&=&\int_{\Sigma}\frac{\partial V}{\partial t}\sigma_k d\mu+\int_{\Sigma}V\frac{\partial\sigma_k}{\partial t} d\mu+\int_{\Sigma}V\sigma_k (F\sigma_1)d\mu\nonumber\\
&=&\int_{\Sigma}\bigg\{uF\sigma_k\!+\!V\big(\!-\!T_{k-1}^{ij}\nabla_i\nabla_j F\!-\!F(\sigma_1\sigma_k\!-\!(k\!+\!1)\sigma_{k+1})\!+\!(n\!-\!k)F\sigma_{k\!-\!1}+\!F\sigma_1\sigma_k\big) \! \bigg\}d\mu\nonumber\\
&=&\int_{\Sigma}\bigg(uF\sigma_k-V(T_{k-1}^{ij}\nabla_i\nabla_j F-F(n-k)\sigma_{k-1})+(k+1)V\sigma_{k+1}F\bigg)d\mu\nonumber\\
&=&\int_{\Sigma}\bigg(uF\sigma_k-\left(\nabla_j( T_{k-1}^{ij}\nabla_i V) -(n-k)\sigma_{k-1}V-(k+1)V\sigma_{k+1}\right) F \bigg)d\mu\nonumber\\
&=&\int_{\Sigma}(k+1)(u\sigma_k +V\sigma_{k+1})Fd\mu,\nonumber
\end{eqnarray}
here we used the fact $T_{k-1}$ is divergence-free and in the fifth equality we used (\ref{Minkowski}). Thus
\begin{equation}\label{Vpk}
\frac{\partial}{\partial t}\int_{\Sigma}Vp_k d\mu=\int_{\Sigma}\bigg((k+1){u} p_k +(n-k-1)V p_{k+1}\bigg)Fd\mu.
\end{equation}
The proof of (5) follows from a similar computation.
\end{proof}

In order to show that $E$ is non-negative,
we use the following flow
\begin{equation}\label{Vflow}
\frac{\partial X}{\partial t}=-V\nu,
\end{equation}
which was first used in \cite{Brendle}. 

\begin{theo} \label{thm3.2} Let $1\le k< n-1$.   Any   horospherical convex hypersurface  in the hyperbolic space $\H^n$ satisfies
\begin{equation}
\label{ineq1}
\int_{\Sigma} Vp_{k+1} \ge \int_\Sigma \left(Vp_{k-1}+\frac{p_{k+1}}{V}\right)d\mu.
\end{equation}
\end{theo}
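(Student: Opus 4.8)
The inequality \eqref{ineq1} is exactly the statement $E(\Sigma)\ge 0$ for the functional $E$ of \eqref{generalE}, so the plan is to show $E$ is nonnegative on every horospherical convex hypersurface. Following Brendle's strategy for the Heintze--Karcher inequality, I would run the conformal flow \eqref{Vflow}, $\partial_t X=-V\nu$, with initial hypersurface $\Sigma(0)=\Sigma$, prove that $t\mapsto E(\Sigma(t))$ is non-increasing, and then evaluate (or bound below) the limit of $E(\Sigma(t))$ as the evolving hypersurfaces collapse to a point. Since $E(\Sigma(0))\ge \lim_{t\to T}E(\Sigma(t))$, it then suffices that this limit be $\ge 0$.

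First I would collect the properties of the flow \eqref{Vflow} from \cite{Brendle}: starting from a horospherical convex hypersurface it remains smooth, embedded and horospherical convex, it exists on a maximal interval $[0,T)$ with $T<\infty$, and as $t\to T$ the hypersurfaces $\Sigma(t)$ shrink to a point, staying inside a fixed compact region so that $V=\cosh r$ stays between $1$ and a uniform constant along the flow. This is precisely where the horospherical hypothesis enters: it guarantees that each $\Sigma(t)$ is convex, hence that the Newton tensors $T_{k-1},T_{k}$ are positive definite on $\Sigma(t)$, so that the Minkowski integral formulas of Section~7 (Propositions \ref{prop1}, \ref{prop2}, \ref{prop1k}) are available for all $t$.

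The core step is the monotonicity $\frac{d}{dt}E(\Sigma(t))\le 0$. Applying Lemma \ref{evo.lemm1}(4) to $\int_\Sigma Vp_{k+1}\,d\mu$ and $\int_\Sigma Vp_{k-1}\,d\mu$, and Lemma \ref{evo.lemm1}(5) with $l=1$ to $\int_\Sigma p_{k+1}/V\,d\mu$, all with $F=-V$, gives $\frac{d}{dt}E$ explicitly as a combination of $\int uVp_{k\pm1}$, $\int V^2p_k$, $\int up_{k+1}/V$, $\int V^2p_{k+2}$ and $\int p_{k+2}$. Using \eqref{fact} the two $p_{k+2}$ terms combine into the manifestly nonpositive quantity $-(n-k-2)\int_\Sigma(u^2+|\nabla V|^2)p_{k+2}\,d\mu$. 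For the remainder I would first use \eqref{eq1} with $k$ replaced by $k+1$ (dropping the nonnegative gradient term) to get $\int_\Sigma V^2p_k\,d\mu\le \int_\Sigma uVp_{k+1}\,d\mu$, reducing the leftover to $(n-2k-2)\int uVp_{k+1}+k\int uVp_{k-1}+k\int up_{k+1}/V$; then, combining \eqref{eq4} (in the form $\int u^2p_{k+2}\ge\int uVp_{k+1}$) with \eqref{eq2_prop1k} ($\int uVp_{k+1}\ge\int uVp_{k-1}+\int up_{k+1}/V$), a short case distinction on the sign of $n-2k-2$ shows this leftover is dominated by the negative $p_{k+2}$ contribution, so $\frac{d}{dt}E(\Sigma(t))\le 0$.

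Finally, for the collapse limit: $V$ converges uniformly on $\Sigma(t)$ to $\cosh r_\infty$, where $r_\infty$ is the $b$-distance from the fixed point $x_0$ to the limit point, while by the relation \eqref{relation1} together with the monotonicity and continuity of the quermassintegrals one has $\int_{\Sigma(t)}p_j\,d\mu\to 0$ for $1\le j\le n-2$ and $\int_{\Sigma(t)}p_{n-1}\,d\mu\to\omega_{n-1}$. Substituting into \eqref{generalE} gives $E(\Sigma(t))\to 0$ when $k\le n-3$ and $E(\Sigma(t))\to\big(\cosh r_\infty-\tfrac{1}{\cosh r_\infty}\big)\omega_{n-1}\ge 0$ when $k=n-2$; in either case the limit is nonnegative, so $E(\Sigma)\ge 0$. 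The main obstacle I anticipate is the flow analysis --- verifying that horospherical convexity is preserved and that the solution exists smoothly up to the extinction time, and controlling the geometric quantities in the collapse limit --- rather than the (lengthy but elementary) monotonicity computation, which only uses Section~7 together with Lemma \ref{evo.lemm1} and \eqref{fact}.
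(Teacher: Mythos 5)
Your proposal follows essentially the same route as the paper: the same functional $E$ from \eqref{generalE}, the same conformal flow \eqref{Vflow}, monotonicity of $E$ via the Section~7 Minkowski integral formulas (your regrouping of the terms using \eqref{fact}, \eqref{eq2}, \eqref{eq4} and \eqref{eq2_prop1k} is just a different bookkeeping of the paper's three-term decomposition in \eqref{E'}), and the collapse-to-a-point limit controlled by monotonicity of the quermassintegrals. If anything, your treatment of the endpoint case $k=n-2$, where $\int_{\Sigma(t)}p_{n-1}\,d\mu\to\omega_{n-1}$ rather than $0$ so the limit of $E$ is only nonnegative rather than zero, is slightly more careful than the paper's blanket claim that $E(\Sigma_t)\to 0$.
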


By Lemma \ref{evo.lemm1}, one immediately obtains the evolution equation of $E$ along flow (\ref{Vflow}),
\begin{eqnarray}\label{E'}
\frac{dE}{dt}
&=&\int_{\Sigma}\bigg\{\big((k+2)u p_{k+1}+(n-k-2)Vp_{k+2}\big)(-V)-\big(kup_{k-1}+(n-k)Vp_{k}\big)(-V)\bigg\}d\mu\nonumber\\
&&+\int_{\Sigma}\bigg(k\frac{u p_{k+1}}{V}+(n-k-2)p_{k+2}\bigg)d\mu\nonumber\\
&=&-(n-k-2)\int_{\Sigma}\left(V^2p_{k+2}-V^2p_{k}-p_{k+2}\right)d\mu-2\int_{\Sigma}\left(u Vp_{k+1}-V^2 p_{k}\right)d\mu\nonumber\\
&&-k\int_{\Sigma}\left(u Vp_{k+1}-u Vp_{k-1}-\frac{up_{k+1}}{V}\right)d\mu.
\end{eqnarray}
It follows, together with (\ref{eq2}) and Proposition \ref{prop1k} that the monotonicity of the functional $E$
$$\frac{dE}{dt}\leq 0,$$
along the flow (\ref{Vflow}). That is, we have showed that

\begin{prop} \label{pro3.1} The functional $E$ defined in (\ref{generalE}) is non-increasing under  flow (\ref{Vflow}).
\end {prop}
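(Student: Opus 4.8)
The plan is to differentiate $E(\Sigma(t))$ along the conformal flow \eqref{Vflow} and to check that every term produced carries the right sign. First I would substitute $F=-V$ into Lemma~\ref{evo.lemm1}: item~(4) applied to $\int_\Sigma Vp_{k+1}\,d\mu$ and to $\int_\Sigma Vp_{k-1}\,d\mu$, together with the simplified case of item~(5) with $l=1$ applied to $\int_\Sigma \tfrac{p_{k+1}}{V}\,d\mu$. Collecting the coefficients of $u$ and of $V$ in the three resulting integrands and regrouping --- the only thing to keep track of being the index shifts $k\mapsto k\pm 1$ and the value of $n-k$ --- produces exactly the identity \eqref{E'} for $\tfrac{dE}{dt}$. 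This step is purely algebraic and routine.

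The core of the argument is then to observe that each of the three integrals on the right of \eqref{E'} is nonnegative, so that, together with the negative prefactors, $\tfrac{dE}{dt}\le 0$. The prefactor $-(n-k-2)$ of the first integral is $\le 0$ because $k\le n-2$; and when $n-k-2>0$, the estimate $\int_\Sigma(V^2p_{k+2}-V^2p_k-p_{k+2})\,d\mu\ge 0$ is precisely \eqref{eq1_prop1k} with $k$ replaced by $k+1$ (for $k=n-2$ the first integral drops out entirely). For the second integral, $\int_\Sigma(uVp_{k+1}-V^2p_k)\,d\mu\ge 0$ is \eqref{eq2} with $k$ replaced by $k+1$. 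For the third, $\int_\Sigma\bigl(uVp_{k+1}-uVp_{k-1}-\tfrac{up_{k+1}}{V}\bigr)\,d\mu\ge 0$ is exactly \eqref{eq2_prop1k}. Summing the three contributions gives the asserted monotonicity of $E$.

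The one point that genuinely requires justification --- and which I expect to be the main obstacle --- is that the Minkowski integral formulas of Section~7 are proved only for \emph{convex} hypersurfaces (one needs the Newton tensors $T_j$ to be positive definite), so the estimates above are legitimate only if each $\Sigma(t)$ stays convex along the flow. I would settle this directly by showing that \eqref{Vflow} preserves horospherical convexity. Under $F=-V$ the Weingarten map obeys the pointwise Riccati-type equation $\partial_t h^j_i=Vh^j_kh^k_i-uh^j_i$, which follows from the standard evolution law $\partial_t h^j_i=-\nabla^j\nabla_i F-F(h^j_kh^k_i-\delta^j_i)$ for hypersurfaces in $\H^n$ combined with $\nabla_i\nabla_j V=Vg_{ij}-uh_{ij}$ (a consequence of \eqref{conformal}). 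Since this equation contains no spatial derivatives of $h$, it suffices to check that the cone $\{h\ge I\}$ is pointwise invariant: if $he=e$ for a unit vector $e$, then $\langle(\partial_t h)e,e\rangle=V-u>0$ by \eqref{fact}. Hence horospherical convexity --- in particular convexity --- is propagated for as long as the flow exists, the Section~7 inequalities apply on the whole flow interval, and $E$ is non-increasing.
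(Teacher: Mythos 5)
Your proposal is correct and follows essentially the same route as the paper: derive the evolution identity \eqref{E'} from Lemma \ref{evo.lemm1} with $F=-V$, then verify that each of the three integrals is nonnegative via \eqref{eq1_prop1k} (with $k\mapsto k+1$), \eqref{eq2} (with $k\mapsto k+1$), and \eqref{eq2_prop1k}, respectively. The only organizational difference is that you fold the preservation of horospherical convexity directly into this proof (via the same Riccati computation and the bound $V-u>0$ from \eqref{fact}), whereas the paper isolates that step as Lemma \ref{lem_b1} and invokes it later in the proof of Theorem \ref{thm3.2}; your placement is arguably the more careful one, since the Section~7 inequalities do require convexity of each $\Sigma(t)$.
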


For the proof Theorem \ref{thm3.2}, we need the following two more lemmas.

\begin{lemm}\label{lem_b1} Flow (\ref{Vflow}) preserves the horospherical convexity.
\end{lemm}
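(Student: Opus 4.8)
\emph{Proof proposal.} The plan is to derive the pointwise evolution equation of the shape operator $h^i_j$ of the hypersurfaces $\Sigma_t$ under the flow \eqref{Vflow}, and to observe that, because $V=\cosh r\in\mathbb{N}_b$, this equation contains \emph{no spatial derivatives of $h$}: it is a (coefficient‑dependent) ODE at each point, so that the horospherical convexity $h^i_j\ge\delta^i_j$ is preserved by an elementary ODE invariance argument rather than by a genuine tensor maximum principle. The mechanism is that the defining relation $\bar\nabla^2V=Vb$ of $\mathbb{N}_b$ makes the speed function $F=-V|_\Sigma$ have an intrinsic Hessian on $\Sigma$ that is algebraic in $h$.

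Concretely, I would first record the standard variation formulas along $\partial_t X=F\nu$ in $\H^n$, namely $\partial_t g_{ij}=2Fh_{ij}$ and $\partial_t h^i_j=-\nabla^i\nabla_j F-F(h^2)^i_j+F\delta^i_j$ (the $+F\delta^i_j$ being the contribution of the ambient curvature $-1$). Taking $F=-V|_\Sigma$ and using that on $\Sigma$
$$\nabla_i\nabla_j(V|_\Sigma)=Vg_{ij}-u\,h_{ij},\qquad u=\langle\bar\nabla V,\nu\rangle,$$
which follows from $\bar\nabla^2V=Vb$ together with the Gauss relation and is exactly the identity already used in Section~7 (cf. $\nabla_i u=\nabla_lV\,h^l_i$), one gets $-\nabla^i\nabla_j F=\nabla^i\nabla_j V=V\delta^i_j-u\,h^i_j$ and hence
$$\partial_t h^i_j=V(h^2)^i_j-u\,h^i_j .$$
(This is worth checking against the model case $\Sigma_t=S_{r(t)}$, where $h^i_j=\coth r\,\delta^i_j$, $u=\sinh r$, $\dot r=-\cosh r$; both sides equal $\tfrac{\cosh r}{\sinh^2 r}\delta^i_j$.) Writing $M^i_j:=h^i_j-\delta^i_j$, this becomes
$$\partial_t M^i_j=V(M^2)^i_j+(2V-u)M^i_j+(V-u)\delta^i_j .$$

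The right-hand side is a polynomial in $M$, hence commutes with $M$ and is diagonal in any $g$-orthonormal eigenbasis of the $g$-symmetric operator $M$; consequently each principal value $\lambda=\kappa_i-1$ of $M$ obeys the scalar ODE $\dot\lambda=V\lambda^2+(2V-u)\lambda+(V-u)$, in the barrier sense at eigenvalue crossings. By \eqref{fact} we have $V^2-u^2=1+|\nabla V|^2\ge1$, so $V-u\ge V-|u|>0$ (and likewise $2V-u>0$); therefore $\lambda=0\Rightarrow\dot\lambda=V-u>0$, and since $V=\cosh r$ is bounded on the compact region of $\H^n$ swept out over any finite time interval, the coefficients are controlled and no $\lambda_i$ can cross $0$ from above. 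As $M\ge0$ at $t=0$ by hypothesis, we conclude $M\ge0$, i.e. $h^i_j\ge\delta^i_j$, on the whole interval of smooth existence, which is precisely the horospherical convexity of $\Sigma_t$. The only delicate point is the sign bookkeeping in the evolution formula for $h^i_j$ and in the Hessian identity, which must be kept consistent with the conventions of Section~7 (where $\sigma_1>0$ on geodesic spheres); once that is pinned down, the positivity $V-u>0$ — which is exactly why the weight is required to lie in $\mathbb{N}_b$ — makes the conclusion immediate. The same computation incidentally shows that ordinary convexity and $k$-convexity are preserved as well.
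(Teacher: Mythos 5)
Your proof is correct and takes essentially the same route as the paper's: both compute $\partial_t\tilde h^i_j$ for $\tilde h^i_j=h^i_j-\delta^i_j$ under $\partial_t X=-V\nu$ using $\nabla^i\nabla_j V=V\delta^i_j-uh^i_j$, obtain a purely algebraic (zeroth-order in $h$) evolution, and conclude from $V-u>0$ that a null eigendirection of $\tilde h$ has strictly positive time-derivative. The only differences are presentational: you make explicit the scalar eigenvalue ODE and the bound $V>|u|$ coming from $V^2=1+u^2+|\nabla V|^2$, which the paper leaves implicit in the phrase ``the Lemma follows easily.''
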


\begin{proof}
A direct computation gives (or see \cite{Huisken1} for instance)
$$
\p_t h^i_{j}=\nabla^i\nabla_j V+V((h^2)^{i}_j-\delta_{j}^i).
$$
Set
$$
\tilde h^i_{j}:=h^i_j-\delta^i_j,
$$
Noting that the fact $$\nabla^i\nabla_j V=V\delta^{i}_j-uh^i_j,$$
which follows from (\ref{conformal}),
one has
$$
\p_t \tilde h^i_j=(V-u)\delta^i_j+V((\tilde h^2)^i_{j}+2(\tilde h)^i_j)-u\tilde h^i_j.
$$
Let $a$ be a unit vector such that
$$
\tilde h^i_j a^j=0.
$$
Then we have
$$
\left((V-u)\delta^i_j+V((\tilde h^2)^i_j+2(\tilde h)^i_j)-u\tilde h^i_j\right)a_i a^j=V-u>0.
$$
The Lemma  follows easily.\end{proof}

\begin{lemm} The quermassintegrals are monotone under the set inclusion, i.e.
\begin{equation}\label{eq_z01} W_k(K_1) \le W_k(K_2), \quad\hbox{ if } K_1 \subset K_2.\end{equation}
As a consequence,
\begin{equation}\label{eq_z02}
\int_{\partial K_1} \sigma_k d \mu \le \int_{\partial K_2} \sigma_k d \mu, \quad\hbox{ if } K_1 \subset K_2.\end{equation}
\end{lemm}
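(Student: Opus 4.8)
The plan is to read both inequalities directly off the integral-geometric definition \eqref{quer0} of the quermassintegrals, together with the relation \eqref{relation1}. Throughout I assume, as the definition \eqref{quer0} requires, that $K_1$ and $K_2$ are geodesically convex domains in $\H^n$, so that \eqref{quer0} and \eqref{relation1} are both available.

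First I would prove \eqref{eq_z01}. Fix $k$ with $1\le k\le n-1$ and suppose $K_1\subset K_2$. For every $k$-dimensional totally geodesic subspace $L_k\in\mathcal L_k$ one has $L_k\cap K_1\subset L_k\cap K_2$, so whenever $L_k\cap K_1\neq\emptyset$ also $L_k\cap K_2\neq\emptyset$; hence $\chi(L_k\cap K_1)\le\chi(L_k\cap K_2)$ pointwise on $\mathcal L_k$. Since the invariant measure $dL_k$ is non-negative and the normalizing constant $\tfrac{(n-k)\omega_{k-1}\cdots\omega_0}{n\omega_{n-2}\cdots\omega_{n-k-1}}$ is positive, integrating this pointwise inequality over $\mathcal L_k$ gives $W_k(K_1)\le W_k(K_2)$. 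The two remaining indices are immediate: $W_0(K)=\mathrm{Vol}(K)$ is obviously monotone under inclusion, and $W_n(K)=\omega_{n-1}/n$ is constant.

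Then \eqref{eq_z02} follows by combining \eqref{relation1} with what has just been proved. For $1\le k\le n-1$ we have
\[
\int_{\partial K}p_k\,d\mu=n\Bigl(W_{k+1}(K)+\frac{k}{n-k+1}W_{k-1}(K)\Bigr),
\]
and since $\frac{k}{n-k+1}\ge 0$ and both $W_{k+1}$ and $W_{k-1}$ are monotone by \eqref{eq_z01}, the right-hand side is monotone in $K$; multiplying by the positive constant $C_{n-1}^{k}$ and recalling $\sigma_k=C_{n-1}^k p_k$ yields $\int_{\partial K_1}\sigma_k\,d\mu\le\int_{\partial K_2}\sigma_k\,d\mu$. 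The case $k=0$ is simply $|\partial K|=n W_1(K)$, again monotone.

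The argument is essentially bookkeeping once the integral-geometric formula \eqref{quer0} is in hand; the only points that need a little care are verifying that the definition genuinely applies (convexity of $K_1,K_2$), keeping the boundary index $k$ in the range $1,\dots,n-1$ where \eqref{relation1} is valid and otherwise handling $k=0$ (and trivially $k=n$) separately, and noting that the coefficient of $W_{k-1}$ in \eqref{relation1} is non-negative so that monotonicity is not spoiled. I do not expect any genuine analytic obstacle here.
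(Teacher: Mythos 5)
Your proposal is correct and follows exactly the route the paper intends: \eqref{eq_z01} is read off from the pointwise monotonicity of $\chi(L_k\cap K)$ under inclusion in the definition \eqref{quer0}, and \eqref{eq_z02} then follows from \eqref{relation1} since the coefficient $\tfrac{k}{n-k+1}$ is non-negative. The paper states this in one line; you have merely supplied the (correct) bookkeeping, including the harmless edge cases $k=0$ and $k=n$.
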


\begin{proof} \eqref{eq_z01} follows easily from the definition of the quermassintegral, see \eqref{quer0}.
\eqref{eq_z02} follows from \eqref{eq_z01} and \eqref{relation1}
\end{proof}

\

\noindent{\it Proof Theorem \ref{thm3.2}}.
For any horospherical convex hypersurface $\Sigma$ we consider the flow with $F=-V$. Let $T^*\in (0,\infty)$ be the maximal time of existence of the flow.
It is clear that $T^*$ is finite. Let $\Sigma_t$ be the evolved hypersurface for $t\in [0, T^*)$ and define $r(t)$ and $R(t)$ the inner radius and outer
radius of $\Sigma_t$ respectively. It is clear that $r(t)\to 0$ when $t\to T^*$. By Lemma \ref{lem_b1}, we know that every $\Sigma_t$ is horospherical convex. As a  feature of  horospherical convex hypersurface, $R(t)$ is controlled by $3\sqrt{r(t)}$ from above when $r(t)$ is sufficiently small, and hence we have that $R(t)$ also converges to $0$, as $t\to T^*$. See \cite{BM}.
 From the  monotonicity \eqref{eq_z02}, we have
 \[ \int_{\Sigma_t} \sigma_k d \mu \le \int_{\partial B_{R(t)}} \sigma_k d \mu,\]
 where $ B_{R(t)}$ and $ \partial B_{R(t)}$  are  the geodesic ball, the geodesic sphere   of radius $R(t)$ respectively,  for $ \Sigma_t\subset B_{R(t)}$.
It is easy to check that
 \[ \int_{\Sigma_t} \sigma_k d \mu\le   \int_{\partial B_{R(t)}} \sigma_k d \mu \to 0, \quad \hbox{ as } R(t) \to 0.\]
 It follows readily that
\[\int_{\Sigma_t} V^l \s_k d \mu \to 0,\]
which implies that $E(\Sigma_t) \to 0$. By Proposition \ref{pro3.1} we have $E(\Sigma)\ge 0$.
\qed

\

We remark that here we have used the horospherical convexity. We believe that in the argument the convexity should be enough,
if we use a more precise information about the conformal flow \eqref{Vflow}.

\section{Weighted Alexandrov-Fenchel inequalities}

Now we begin to prove Theorem \ref{wAF}.

\

\noindent{\it Proof of Theorem \ref{wAF}.}
Due to Theorem \ref{thm3.2},  we are able to use  the induction argument to prove this theorem. When $k=0$, (\ref{eq1_thm}) is just (\ref{eq04}), which was proved  in \cite{dLG}. Assume (\ref{eq1_thm}) holds for $k-1$, namely the following holds,
\begin{equation}\label{eq2_thm}
 \int_{\Sigma} V p_{2k-1}  d\mu \ge \omega_{n-1}{\left\{\left(\frac{|\Sigma|}{\omega_{n-1}}\right)^{\frac{n}{k(n-1)}}+\left(\frac{|\Sigma|}{\omega_{n-1}}\right)^{\frac{n-2k}{k(n-1)}} \right\}}^{k}.
\end{equation}
We need to show that (\ref{eq1_thm}) holds for $k$. For the simplicity of notation
we denote
\[\|\Sigma\|=\frac{|\Sigma|}{\omega_{n-1}}.
\]
First recall (\ref{eq005}) that
$$\int_{\Sigma} p_{2k+1}d\mu\ge   \omega_{n-1}\left({\|\Sigma\|}^{\frac{2}{2k+1}}+{\|\Sigma\|}^{\frac{2(n-2k-2)}{(2k+1)(n-1)}}\right)^{\frac{2k+1}{2}}.$$
It follows, together with the H\"{o}lder inequality, that
\begin{eqnarray*}
\left(\int_{\Sigma}Vp_{2k+1}d\mu\right)\left(\int_{\Sigma}\frac{p_{2k+1}}{V}d\mu\right)&\geq&
\left(  \int_{\Sigma} p_{2k+1}d\mu\right)^2\\
&\ge&
 \omega_{n-1}^2\left({\|\Sigma\|}^{\frac{2}{2k+1}}+{\|\Sigma\|}^{\frac{2(n-2k-2)}{(2k+1)(n-1)}}\right)^{2k+1}\\
&=&\omega_{n-1}^2 {\|\Sigma\|}^2 \left(1+{\|\Sigma\|}^{-\frac{2}{n-1}}\right)^{2k+1}.
\end{eqnarray*}
Set
$$
\alpha:=\omega_{n-1}^2 {\|\Sigma\|}^2 \left(1+{\|\Sigma\|}^{-\frac{2}{n-1}}\right)^{2k+1}.$$
 From above we have
\be\label{eq_z1} \begin{array}{rcl}
\ds  \int_{\Sigma}V p_{2k+1} d\mu -\int_{\Sigma} \frac {p_{2k+1}}{V} d\mu\le \int_{\Sigma}V p_{2k+1}d\mu-\frac{\alpha}{\int_{\Sigma}V p_{2k+1} d\mu}
\end{array}.\ee
 From our crucial Minkowski integral formula \eqref{ineq1} and the induction assumption (\ref{eq2_thm}), we have
\be\label{eq_z2}\begin{array}{rcl}
\ds\vs \int_{\Sigma}V p_{2k+1} d\mu -\int_{\Sigma} \frac {p_{2k+1}}{V} d\mu&\geq& \ds \int_{\Sigma}Vp_{2k-1}d\mu\\
&\geq& \ds \omega_{n-1}\|\Sigma\|^{\frac{n}{n-1}}\left(1+\|\Sigma\|^{-\frac{2}{n-1}}\right)^k.
\end{array}\ee
We introduce an auxiliary function $f(t):=t-\frac {\alpha} t$.  Then \eqref{eq_z1} and \eqref{eq_z2} imply
\beq\label{add_zz}
f\left(\int_{\Sigma}V p_{2k+1} d\mu\right)\geq \omega_{n-1}\|\Sigma\|^{\frac{n}{n-1}}\left(1+\|\Sigma\|^{-\frac{2}{n-1}}\right)^k.
\eeq
On the other hand, one can easily check that
\begin{eqnarray*}
&&f\left( \omega_{n-1}\left({\|\Sigma\|}^{\frac{n}{(k+1)(n-1)}}+{\|\Sigma\|}^{\frac{n-2k-2}{(k+1)(n-1)}} \right)^{k+1}\right)\\
&=&\omega_{n-1} {\|\Sigma\|}^{\frac{n}{n-1}}\left(1+{\|\Sigma\|}^{-\frac{2}{n-1}}\right)^{k+1}-
\omega_{n-1}{\|\Sigma\|}^{\frac{n-2}{n-1}} \left(1+{\|\Sigma\|}^{-\frac{2}{n-1}}\right)^{k}\\
&=&\omega_{n-1} {\|\Sigma\|}^{\frac{n}{n-1}}\left(1+{\|\Sigma\|}^{-\frac{2}{n-1}}\right)^{k}\\
&\le & f\left(\int_{\Sigma}V p_{2k+1} d\mu\right) .
\end{eqnarray*}
The last inequality follows from \eqref{add_zz}.
Since $f$ is increasing on the interval $[0,+\infty)$, we have
\[\int_{\Sigma}V p_{2k+1} d\mu\ge \omega_{n-1}\left({\|\Sigma\|}^{\frac{n}{(k+1)(n-1)}}+{\|\Sigma\|}^{\frac{n-2k-2}{(k+1)(n-1)}} \right)^{k+1},
\]
the desired result (\ref{eq1_thm}). When equality holds, it follows from the equality in H\"older inequality that $V$  is constant on $\Sigma$, which yields $\Sigma$ is a centered geodesic sphere.  Hence we complete the proof.
\qed

\

We end  this section with a conjecture of ``weighted'' Alexandrov-Fenchel inequalities for even $k$.
\begin{conj} Let $k\le n-1 $ be even.
Any horospherical convex hypersurface $\Sigma$  in the hyperbolic space $\H^n$  satisfies
\begin{equation}\label{eq_conj.}
  \int_{\Sigma} V\s_{k}  d\mu \ge{C_{n-1}^{k}} \omega_{n-1}{\left(\left(\frac{|\Sigma|}{\omega_{n-1}}\right)^{\frac{2n}{(k+1)(n-1)}}+\left(\frac{|\Sigma|}{\omega_{n-1}}\right)^{\frac{2(n-k-1)}{(k+1)(n-1)}} \right)}^{\frac{k+1}2}.
\end{equation}
Equality holds if and only if $\Sigma$ is a centered geodesic sphere in $\H^n$.
\end{conj}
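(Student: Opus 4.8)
The plan is to adapt, essentially verbatim, the inductive scheme that proves Theorem~\ref{wAF}. Write $\|\Sigma\|=|\Sigma|/\omega_{n-1}$ and $p_j=\sigma_j/C^j_{n-1}$, so that the conjectured bound \eqref{eq_conj.} for $k=2m$ becomes $\int_\Sigma V p_{2m}\,d\mu\ge T_{2m}$, where $T_{2m}:=\omega_{n-1}\|\Sigma\|^{n/(n-1)}\bigl(1+\|\Sigma\|^{-2/(n-1)}\bigr)^{(2m+1)/2}$, and the unweighted inequality \eqref{eq005} of Theorem~\ref{AF}, which is already available for every even exponent, rewrites as $\int_\Sigma p_{2m}\,d\mu\ge\omega_{n-1}\|\Sigma\|\bigl(1+\|\Sigma\|^{-2/(n-1)}\bigr)^m$. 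I would induct on $m$, the step being the exact analogue of the one in the proof of Theorem~\ref{wAF}: the crucial Minkowski integral formula \eqref{ineq1} of Theorem~\ref{thm3.2}, applied with parameter $2m-1$, gives $\int_\Sigma Vp_{2m}\,d\mu-\int_\Sigma \tfrac{p_{2m}}{V}\,d\mu\ge\int_\Sigma Vp_{2m-2}\,d\mu\ge T_{2m-2}$ by the induction hypothesis; Hölder's inequality together with \eqref{eq005} gives $\bigl(\int_\Sigma Vp_{2m}\bigr)\bigl(\int_\Sigma\tfrac{p_{2m}}{V}\bigr)\ge\bigl(\int_\Sigma p_{2m}\bigr)^2\ge\alpha:=\omega_{n-1}^2\|\Sigma\|^2\bigl(1+\|\Sigma\|^{-2/(n-1)}\bigr)^{2m}$; hence, with $f(t)=t-\alpha/t$ (strictly increasing on $(0,\infty)$), one has $f\bigl(\int_\Sigma Vp_{2m}\,d\mu\bigr)\ge T_{2m-2}$, and a direct computation shows $f(T_{2m})=T_{2m-2}$, so $\int_\Sigma Vp_{2m}\,d\mu\ge T_{2m}$. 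Equality is tracked exactly as in Theorem~\ref{wAF}: it forces equality in Hölder, hence $V$ constant on $\Sigma$, hence $\Sigma$ a centered geodesic sphere.

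The one genuinely new ingredient is the base case. Since \eqref{ineq1} only relates $p_{2m}$ to $p_{2m-2}$, i.e.\ it preserves the parity of the index, the induction for even $k$ cannot be seeded from the odd-index inequalities of Theorem~\ref{wAF} (the parity-changing identities in Section~7, such as $\int_\Sigma u\,p_{k+1}\,d\mu=\int_\Sigma V p_k\,d\mu$, only yield the strictly weaker bound $\int_\Sigma Vp_{2m}\,d\mu\ge\int_\Sigma Vp_{2m-1}\,d\mu$). Thus the chain must start either from $m=0$ — the weighted isoperimetric-type inequality $\int_\Sigma V\,d\mu\ge\omega_{n-1}\bigl(\|\Sigma\|^{2n/(n-1)}+\|\Sigma\|^2\bigr)^{1/2}$ — or, equivalently for the bookkeeping above, from $m=1$, the weighted Alexandrov--Fenchel inequality for $\sigma_2$; neither follows from the results recalled in this paper. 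To prove the $m=0$ case I would attempt a curvature-flow argument in the spirit of Theorem~\ref{thm3.2}: run Brendle's conformal flow \eqref{Vflow} $\partial_tX=-V\nu$, use the evolution formulas of Lemma~\ref{evo.lemm1} to show that the deficit $\int_{\Sigma_t} V\,d\mu-\omega_{n-1}\bigl(\|\Sigma_t\|^{2n/(n-1)}+\|\Sigma_t\|^2\bigr)^{1/2}$ is monotone, and then let $\Sigma_t$ shrink to a point (where the deficit tends to $0$), using the monotonicity of the quermassintegrals exactly as at the end of the proof of Theorem~\ref{thm3.2}; an alternative is Gerhardt's inverse curvature flow, along the lines used for the unweighted $\sigma_2$ inequality in \cite{LWX}, carrying the weight $V$ through the monotonicity computation.

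The main obstacle is precisely this base case. The weight $V=\cosh r$ couples awkwardly with the isoperimetric ratio $\|\Sigma\|$, so the time-derivative of the deficit along \eqref{Vflow} (or along an inverse curvature flow) has no evident sign, and the support-function identities of Section~7 — in particular \eqref{fact}, \eqref{eq2}, and Proposition~\ref{prop1k} — would have to be combined very delicately; it is conceivable that a genuinely new monotone quantity, or a different flow, is required. Once the base case ($m=0$, equivalently the $\sigma_2$ weighted inequality) is established, however, the induction above is entirely routine and the equality characterization comes for free, so the whole conjecture reduces to that single weighted inequality.
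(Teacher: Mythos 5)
You have not proved this statement, but neither does the paper: it is stated there as a conjecture, and the authors say explicitly that their inductive argument ``still works if the induction argument could start,'' i.e.\ if the base case \eqref{con_last} were known, and that they could not prove it. Your proposal reproduces this reduction exactly. The inductive step you describe is correct and is verbatim the one in the proof of Theorem \ref{wAF}: writing $\|\Sigma\|=|\Sigma|/\omega_{n-1}$ and $T_{2m}=\omega_{n-1}\|\Sigma\|^{n/(n-1)}\bigl(1+\|\Sigma\|^{-2/(n-1)}\bigr)^{(2m+1)/2}$, the identity $f(T_{2m})=T_{2m-2}$ with $f(t)=t-\alpha/t$ checks out, the crucial formula \eqref{ineq1} applies with index $2m-1$ (which lies in the admissible range $1\le 2m-1<n-1$ of Theorem \ref{thm3.2} precisely when $k=2m\le n-1$ and $m\ge 1$), and your observation that \eqref{ineq1} preserves the parity of the index, so that the even chain cannot be seeded from the odd-index result of Theorem \ref{wAF}, is exactly the point the authors make. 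What remains open, in your write-up and in the paper alike, is the single weighted isoperimetric-type inequality $\int_\Sigma V\,d\mu\ge\omega_{n-1}\bigl(\|\Sigma\|^{2n/(n-1)}+\|\Sigma\|^{2}\bigr)^{1/2}$, which is \eqref{con_last}. Your suggested flow approaches to it are plausible but not carried out, so the gap is genuine; it is the same gap the authors acknowledge, and until it is filled neither argument constitutes a proof of the conjecture.

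For completeness, note how the paper sidesteps the obstruction: the weaker even-index result proved at the end of Section 9 (inequality \eqref{eq9.6}) replaces $|\Sigma|/\omega_{n-1}$ by $\int_\Sigma u\,d\mu/\omega_{n-1}$, where $u=\langle\bar\nabla V,\nu\rangle$ is the support function. For that modified functional the $m=0$ base case does go through: the pointwise identity $V^2=1+u^2+|\nabla V|^2$ from \eqref{fact} gives $V\ge \tfrac1V+\tfrac{u^2}{V}$, and two applications of H\"older yield $\bigl(\int_\Sigma V\,d\mu\bigr)^2\ge\bigl(\int_\Sigma 1\,d\mu\bigr)^2+\bigl(\int_\Sigma u\,d\mu\bigr)^2$, after which the inequality $\|\Sigma\|^{n/(n-1)}\ge\int_\Sigma u\,d\mu/\omega_{n-1}$ from \cite{dLG} converts this into the desired form; the rest of the induction then runs as you describe. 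If you want an unconditional statement for even $k$, that is the available route; proving the conjecture as stated still requires establishing \eqref{con_last}.
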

We have proved inequality \eqref{eq_conj.} for odd $k$. For even $k$ our argument presented in this paper still works if the induction argument could start,
i.e., if
the following inequality
\be\label{con_last}
\int_{\Sigma} V  d\mu \ge \omega_{n-1}{\left(\left(\frac{|\Sigma|}{\omega_{n-1}}\right)^{\frac{2n}{n-1}}+\left(\frac{|\Sigma|}{\omega_{n-1}}\right)^2 \right)}^{\frac{1}2},
\ee
holds. We believe that \eqref{con_last} is true. However, we could not prove it yet.
Instead, we  have a weaker version of (\ref{eq_conj.}) for even $k$, which is also optimal.

\begin{theo}
Any horospherical convex hypersurface $\Sigma$  in the hyperbolic space $\H^n$  satisfies
\begin{equation}\label{eq9.6}
  \int_{\Sigma} V\s_{2k}  d\mu \ge{C_{n-1}^{2k}} \omega_{n-1}{\left(\left(\frac{\int_{\Sigma}ud\mu}{\omega_{n-1}}\right)^{\frac{2}{2k+1}}+\left(\frac{\int_{\Sigma}ud\mu}{\omega_{n-1}}\right)^{\frac{2(n-2k-1)}{(2k+1)n}} \right)}^{\frac{2k+1}2}.
\end{equation}
Here ${u}=\langle \bar\nabla V,\nu\rangle>0$ is the support function, where $\nu$ is the outer normal vector.
Equality holds if and only if $\Sigma$ is a centered geodesic sphere in $\H^n$.
\end{theo}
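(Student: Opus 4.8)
The plan is to run an induction on $k$ that parallels the proof of Theorem \ref{wAF}, with the weighted quantity $\mathcal{A}:=\int_{\Sigma}u\,d\mu$ taking over the role that the area $|\Sigma|$ plays there. The first move is to rewrite the left-hand side via the classical weighted Minkowski identity \eqref{Minkowski_Identity}: since $\sigma_{2k}=C_{n-1}^{2k}p_{2k}$ (recall \eqref{pj}) and \eqref{Minkowski_Identity} gives $\int_{\Sigma}u\,p_{2k+1}\,d\mu=\int_{\Sigma}Vp_{2k}\,d\mu$, one has
\[
\int_{\Sigma}V\sigma_{2k}\,d\mu=C_{n-1}^{2k}\int_{\Sigma}Vp_{2k}\,d\mu=C_{n-1}^{2k}\int_{\Sigma}u\,p_{2k+1}\,d\mu,
\]
so it suffices to establish the sharp lower bound $\int_{\Sigma}Vp_{2k}\,d\mu\ge \omega_{n-1}\big((\mathcal{A}/\omega_{n-1})^{\frac{2}{2k+1}}+(\mathcal{A}/\omega_{n-1})^{\frac{2(n-2k-1)}{(2k+1)n}}\big)^{\frac{2k+1}{2}}$ for every horospherically convex $\Sigma$.

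For the inductive step I would assume the corresponding bound for $\int_{\Sigma}Vp_{2k-2}\,d\mu$ and apply the crucial Minkowski integral formula \eqref{ineq1}, with $k$ replaced by $2k-1$, to get $\int_{\Sigma}Vp_{2k}\,d\mu\ge\int_{\Sigma}Vp_{2k-2}\,d\mu+\int_{\Sigma}\tfrac{p_{2k}}{V}\,d\mu$. Combining this with the Cauchy--Schwarz inequality $\big(\int_{\Sigma}p_{2k}\,d\mu\big)^2\le\big(\int_{\Sigma}Vp_{2k}\,d\mu\big)\big(\int_{\Sigma}\tfrac{p_{2k}}{V}\,d\mu\big)$ and the unweighted hyperbolic Alexandrov--Fenchel inequality \eqref{eq005} with the even index $2k$ to bound $\int_{\Sigma}p_{2k}\,d\mu$ from below, one obtains, exactly as in the proof of Theorem \ref{wAF}, that $f\big(\int_{\Sigma}Vp_{2k}\,d\mu\big)\ge\int_{\Sigma}Vp_{2k-2}\,d\mu$ where $f(t)=t-\beta/t$ and $\beta$ is the square of the \eqref{eq005}-bound. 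Since $f$ is increasing, it then remains to verify algebraically that the candidate right-hand side at level $k$ has $f$-value not exceeding the level-$(k-1)$ bound; this is an identity on centered geodesic spheres, so it is a routine (if tedious) check once one writes both sides with a common exponent spacing of $2/n$.

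The genuinely new ingredient is the base case $k=0$, namely the weighted isoperimetric-type inequality $\int_{\Sigma}V\,d\mu\ge \omega_{n-1}\big((\mathcal{A}/\omega_{n-1})^{2}+(\mathcal{A}/\omega_{n-1})^{\frac{2(n-1)}{n}}\big)^{1/2}$ with $\mathcal{A}=\int_{\Sigma}u\,d\mu$. I would attempt it by writing $\int_{\Sigma}V\,d\mu=\int_{\Sigma}u\,p_1\,d\mu$ (Minkowski identity \eqref{Minkowski_Identity} with index $0$), Cauchy--Schwarz $\mathcal{A}^2\le\big(\int_{\Sigma}u p_1\,d\mu\big)\big(\int_{\Sigma}\tfrac{u}{p_1}\,d\mu\big)=\big(\int_{\Sigma}V\,d\mu\big)\big(\int_{\Sigma}\tfrac{u}{p_1}\,d\mu\big)$, and a sharp upper bound for $\int_{\Sigma}\tfrac{u}{p_1}\,d\mu$ in terms of $\mathcal{A}$ (one that is an equality on centered geodesic spheres); alternatively one can try the conformal flow $\partial_tX=-V\nu$ used in Theorem \ref{thm3.2}, proving monotonicity of the appropriate functional and using the quermassintegral monotonicity \eqref{eq_z02} to identify its limit as $\Sigma_t$ collapses to $x_0$. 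A second, related wrinkle is that \eqref{eq005} is phrased in terms of $|\Sigma|$ while the target of \eqref{eq9.6} is phrased with $\mathcal{A}$, which in the inductive step forces a comparison between $|\Sigma|$ and $\int_{\Sigma}u\,d\mu$ of the same weighted-isoperimetric flavour. I expect this circle of weighted isoperimetric comparisons --- not the bookkeeping of the induction --- to be the main obstacle; it is exactly the analogue of \eqref{con_last}, which is why one obtains the weaker \eqref{eq9.6} rather than the conjectured \eqref{eq_conj.} for even $k$.

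Finally, for the equality statement: tracing equality back through the Hölder/Cauchy--Schwarz steps, through \eqref{ineq1}, and through \eqref{eq005} at each level of the induction forces $V$ to be constant along $\Sigma$, whence $\Sigma$ is a geodesic sphere centered at $x_0$; the converse is the direct computation that both sides of \eqref{eq9.6} agree on such spheres.
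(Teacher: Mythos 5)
Your inductive step is essentially the paper's: the authors likewise iterate the crucial Minkowski integral formula \eqref{ineq1} (applied with index $2k-1$), H\"older's inequality, and the unweighted hyperbolic Alexandrov--Fenchel inequality \eqref{eq005} converted to the variable $\int_\Sigma u\,d\mu$ via the comparison $\bigl(|\Sigma|/\omega_{n-1}\bigr)^{n/(n-1)}\ge \int_\Sigma u\,d\mu/\omega_{n-1}$. That comparison is exactly (\ref{fact 2}), a quoted result of de Lima--Gir\~ao (Proposition 3.3 there), so the ``weighted isoperimetric comparison'' you flag as the main obstacle inside the induction is already available and is not where the work lies. The paper then declares the remaining bookkeeping ``essentially the same'' as in Theorem \ref{wAF} and skips it, much as you do.

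The genuine gap is the base case $k=0$, which you correctly single out as the new ingredient but do not prove: neither of your two proposed routes is carried out, and the first hinges on a sharp upper bound for $\int_\Sigma u/\sigma_1\,d\mu$ in terms of $\int_\Sigma u\,d\mu$ that is established nowhere in the paper (Brendle's Heintze--Karcher inequality goes the wrong way, bounding $\int_\Sigma V/\sigma_1\,d\mu$ from \emph{below} by a volume integral). The paper's actual base case is short and elementary: from \eqref{fact} one has the pointwise bound $V\ge (1+u^2)/V$, hence
\begin{equation*}
\Bigl(\int_\Sigma V\,d\mu\Bigr)^2\ \ge\ \Bigl(\int_\Sigma V\,d\mu\Bigr)\Bigl(\int_\Sigma \frac{1}{V}\,d\mu+\int_\Sigma\frac{u^2}{V}\,d\mu\Bigr)\ \ge\ |\Sigma|^2+\Bigl(\int_\Sigma u\,d\mu\Bigr)^2
\end{equation*}
by two applications of Cauchy--Schwarz, after which (\ref{fact 2}) converts $|\Sigma|^2$ into $\omega_{n-1}^2\bigl(\int_\Sigma u\,d\mu/\omega_{n-1}\bigr)^{2(n-1)/n}$, giving precisely \eqref{eq9.6} for $k=0$. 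Without this (or an equivalent) argument the induction cannot start, so as written your proof is incomplete. (Your opening reduction $\int_\Sigma V\sigma_{2k}\,d\mu=C_{n-1}^{2k}\int_\Sigma u\,p_{2k+1}\,d\mu$ via \eqref{Minkowski_Identity} is correct but plays no role afterwards; the equality discussion is fine once the base case is in place.)
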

\begin{proof}
As in the proof of Theorem \ref{wAF}, we adopt the induction argument.
First when $k=0$, it follows from  (\ref{fact}) and H\"older's inequality that
\begin{eqnarray*}
\ds  \left(\int_{\Sigma} V  d\mu\right)^2
&\ge &\ds \left(\int_{\Sigma} V  d\mu\right) \left(\int_{\Sigma}\frac1 V  d\mu+ \int_{\Sigma}\frac{u^2}{ V } d\mu\right) \\
&\ge & \ds \left(\int_{\Sigma} 1\;  d\mu\right)^2+  \left(\int_{\Sigma}u d\mu\right)^2.
\end{eqnarray*}
Using the fact
\begin{equation}\label{fact 2}
\left(\frac{|\Sigma|}{\omega_{n-1}}\right)^{\frac{n}{n-1}}\ge \frac{\ds\int_{\Sigma}ud\mu}{\omega_{n-1}},
\end{equation}
which has been proved in \cite{dLG} (see Proposition 3.3), we obtain the desired result for $k=0$ that
$$\ds  \left(\int_{\Sigma} V  d\mu\right)^2\ge  \ds  \omega_{n-1}^2 \left(\left(\frac{\int_{\Sigma}ud\mu}{\omega_{n-1}}\right)^2+\left(\frac{\int_{\Sigma}ud\mu}{\omega_{n-1}}\right)^{\frac{2(n-1)}{n}} \right).$$
Now we can start the induction argument.  Noting that by (\ref{eq005}) and (\ref{fact 2}), we have
\begin{eqnarray*}
\int_{\Sigma}\s_{k}d\mu&\geq& C_{n-1}^k\omega_{n-1}\bigg\{\bigg(\frac{\int_{\Sigma}ud\mu}{\omega_{n-1}}\bigg)^{\frac{2(n-1)}{k n}}+\bigg(\frac{\int_{\Sigma}ud\mu}{\omega_{n-1}}\bigg)^{\frac{2(n-k-1)}{k n}}\bigg\}^{\frac {k}{2}}\\
&=&C_{n-1}^k\omega_{n-1}\bigg(\frac{\int_{\Sigma}ud\mu}{\omega_{n-1}}\bigg)^{\frac{n-1}{n}}\bigg\{1+\bigg(\frac{\int_{\Sigma}ud\mu}{\omega_{n-1}}\bigg)^{-\frac{2}{n}}\bigg\}^{\frac {k}{2}}.
\end{eqnarray*}
The rest of the proof is essentially the same  as in the one of Theorem \ref{wAF} and we skip it.
\end{proof}

\end{document}